\theoremstyle{plain}
\newtheorem{theorem}{Theorem}[section]
\newtheorem{lemma}[theorem]{Lemma}
\newtheorem{proposition}[theorem]{Proposition}
\newtheorem{corollary}[theorem]{Corollary}
\newtheorem{observation}[theorem]{Observation}
\newtheorem{fact}[theorem]{Fact}
\theoremstyle{definition}
\newtheorem{definition}[theorem]{Definition}
\newtheorem{remark}[theorem]{Remark}
\newtheorem*{problem*}{Problem}
\newtheorem{problem}[theorem]{Problem}
\newcommand{\R}{\mathbb{R}}
\newcommand{\N}{\mathbb{N}}
\newcommand{\D}{\mathrm{D}}
\newcommand{\inte}{\mathrm{int}}
\newcommand{\cconv}{\overline{\mathrm{conv}}\,}
\newcommand{\conv}{{\mathrm{conv}}\,}
\newcommand{\ext}{\mathrm{ext}}
\renewcommand{\epsilon}{\varepsilon}
\renewcommand{\phi}{\varphi}
\newcommand{\B}{\mathcal{B}}
\begin{document}

\title{\large Unit balls of Polyhedral Banach spaces with many extreme points}

\author{Carlo Alberto De Bernardi
}

\address{Dipartimento di Matematica per le Scienze economiche, finanziarie ed attuariali, Universit\`{a} Cattolica del Sacro Cuore, Via Necchi 9, 20123 Milano, Italy}

\email{carloalberto.debernardi@unicatt.it}
\email{carloalberto.debernardi@gmail.com}

 \subjclass[2000]{Primary: 46B20; Secondary: 52B99}

 \keywords{}

 \thanks{}

\dedicatory{}

\begin{abstract} Let $E$ be a  $(\mathrm{IV})$-polyhedral Banach space. We show that, for each $\epsilon>0$, $E$ admits an $\epsilon$-equivalent  $\mathrm{(V)}$-polyhedral norm  such that the corresponding closed unit ball  is the closed
convex hull of its extreme points. 

In particular, we obtain that every separable isomorphically  polyhedral Banach space,
for each $\epsilon>0$,  admits an $\epsilon$-equivalent  $(\mathrm{V})$-polyhedral norm  such that the corresponding closed unit ball  is the closed
convex hull of its extreme points.
\end{abstract}

\maketitle


\markboth{	C.A.~De Bernardi
}{Unit balls of Polyhedral Banach spaces with many extreme points}

\section{Introduction}

Let $E$  be a real infinite-dimensional Banach space with topological dual $E^*$. We say that $E$ is  \emph{polyhedral} if
the unit ball of each of its finite-dimensional subspaces is a
polytope. Infinite-dimensional polyhedral Banach spaces were
introduced by Victor~Klee in \cite{Klee60}, and in the same paper it was proved 
that $c_0$ is polyhedral. 
Polyhedral Banach spaces were studied by various
authors, we refer the reader to \cite{hand} and the references
therein for the main basic results on the subject. 

The set  $\ext(B_{E^*})$ consisting of all extreme points of the dual unit ball $B_{E^*}$ plays a fundamental role in the study of polyhedral Banach spaces. Indeed, different notions of polyhedrality known in the literature are 
defined by means of geometric and topological properties of $\ext(B_{E^*})$  (for the different notions of polyhedrality see Definition~\ref{def:poli} below or \cite{InfPoly}). 
On the other hand, the results about polyhedrality and involving the set  $\ext(B_{E})$, consisting of all extreme points of the unit ball  $B_E$, are much more sporadic in the literature. This is due to the fact that extreme points of the unit ball of a polyhedral Banach space are in some sense 
 quite rare, and certain stronger notions of polyhedrality exclude existence of extreme points of $B_E$. For example, we have the following couple of facts:
\begin{itemize}
	\item  if 
$E$ is $(\mathrm{IV})$-polyhedral, then  $\ext(B_{E})=\emptyset$ (see \cite[Theorem~3.6]{InfPoly}) (a well-known example of this kind is $c_0$);
\item if $E$ is $(\mathrm{VI})$-polyhedral,  then each point in $\ext(B_E)$ is $\|\cdot\|$-isolated (see Observation~\ref{obs: extisolated}), and hence $\mathrm{card}\bigl(\ext(B_E)\bigr)\leq\mathrm{dens}(E)$.
 \end{itemize}

Existence of a   polyhedral Banach space  whose unit ball contains infinitely many extreme points was already observed in the famous paper 
\cite{Lindenstrauss},
 in which Joram~Lindenstrauss proved that
{\em every infinite-dimensional Banach space has a two-dimen\-sional
quotient whose unit ball is not a polygon, and hence no infinite-dimensional
dual Banach space is polyhedral}. 
For other examples of 
polyhedral Banach spaces  whose unit ball contains infinitely many extreme points we refer the reader to \cite{InfPoly}.

A natural question about $\ext(B_E)$, posed by Lindenstrauss himself in \cite{Lindenstrauss}, is whether
there exist a polyhedral
infinite-dimensional Banach space whose unit ball is the closed
convex hull of its extreme points. This problem was 
solved in the affirmative in the paper \cite{DEPOLY}, by considering a suitable renorming of $c_0$.
After that, other examples of   polyhedral Banach spaces with the same property were provided in \cite{Schreier}. We point out that  the examples contained both in \cite{DEPOLY} and in \cite{Schreier} are separable and satisfy   a stronger form of polyhedrality,  namely $(\mathrm{V})$-polyhedrality. 	The results contained in our paper concern the following natural question: to what extent, given a polyhedral Banach space,  is it possible to find a polyhedral renorming  whose unit ball  is the closed
convex hull of its extreme points?   
	Let us  briefly describe the structure and the main results of our paper.

After same notation and preliminaries, contained in Section~\ref{prelim},  in Section~\ref{sec: mainconstruction} we consider  a polyhedral Banach space $E$, satisfying a certain additional condition, called by us property $(**)$ and implied by $(\mathrm{V})$-polyhedrality. Then, given a positive number $\epsilon$, we define a certain $\epsilon$-equivalent  renorming of $E$, denoted by $X$.
The main ingredients used in our construction are:
\begin{enumerate}
	\item the well-known structural theorem by V.P.~Fonf, asserting that the unit sphere of an infinite-dimensional polyhedral Banach  space $E$ is covered by $\Gamma$'s many true faces (see Theorem~\ref{th: structure} below), where $\Gamma=\mathrm{dens}(E)$;
	\item Lemma~\ref{lemma: removefaces}, which, roughly speaking, asserts that  if we  do not remove  too much true faces from the unit sphere of $E$, then the unit ball $B_E$ coincides with the closed convex hull of the remaining true faces.
\end{enumerate} 
For the sake of clearness we divide the construction in two cases: the separable one and the non-separable one. Even if the two cases are formally almost identical, we think that the  idea of the construction can be more easily understood by the former.
Despite  some unavoidable technicalities, the  geometrical argument used in our construction is quite simple and  it is in part inspired by the results contained in \cite{DEPOLY} and in \cite[Section~5, (e)]{Lindenstrauss}.

In Section~\ref{sec: proofpoli}, we prove that $X$ is a polyhedral Banach space. In order to do that, we introduce a result, namely Proposition~\ref{prop: pyramidaldecomposition}, asserting that $B_X$ can be seen as the union of  $B_E$ and  $\Gamma$'s many sets, each of them obtained as the convex hull of a true face of $B_E$ and a singleton. Moreover, at the end of the section, we show that $B_X$  is the closed
convex hull of its extreme points. 

In Section~\ref{stronger}, we show that if in addition $E$ (the  Banach space taken as starting point of our construction)  is $(\mathrm{IV})$-polyhedral then  $X$   is $(\mathrm{V})$-polyhedral.
Finally, in Section~\ref{sec:final} we presents some easy consequences of our results and some open problems. For example, it remains open whether each polyhedral Banach space admits an equivalent polyhedral renorming such that 
the corresponding unit ball is the closed convex hull of its extreme points (cf. Problem~\ref{pb: without**}).

\section{Notation and preliminaries}\label{prelim}

Throughout all this paper, we consider real Banach spaces. If not differently stated, we assume that the Banach spaces considered are infinite-dimensional. Given a Banach space $E$, with topological dual $E^*$, we denote by $B_E$, $B_E^0$ and $S_E$ the closed unit ball, the open unit ball  and the unit sphere of $E$, respectively. 
Given an ordinal number $\Gamma$, it will be often identified  with the corresponding  interval of ordinal numbers $[0,\Gamma)$, endowed with the usual order. A set $B\subset[0,\Gamma)$ is called {\em cofinal} in $[0,\Gamma)$ if for each $\gamma\in [0,\Gamma)$ there exists
$b\in B$ such that $\gamma\leq b$. 

For $x,y\in E$, $[x,y]$ denotes the closed segment in $E$ with
endpoints $x$ and $y$, and $(x,y)=[x,y]\setminus\{x,y\}$ is the
corresponding ``open'' segment.
Even if the same notation may be used also for intervals in $\R$ and for intervals of ordinals, the  meaning should be clear from the context. We shall need the following elementary fact, the proof of which is left to the reader. 

\begin{fact}\label{fact: convex}
	Let $C,D$ be  subset of $E$ and suppose that, for every $x,y\in D$ with $x\neq y$, the segment $[x,y]$ intersects $C$. Then
	$$\conv (D)\subset \bigcup_{x\in D}\conv(C\cup\{x\}).$$
\end{fact}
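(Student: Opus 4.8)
The plan is to take a point $z \in \conv(D)$, write it as a convex combination $z = \sum_{i=1}^n \lambda_i x_i$ with $x_i \in D$ and $\lambda_i > 0$, $\sum \lambda_i = 1$, and argue by induction on $n$ that $z$ lies in $\conv(C \cup \{x\})$ for some $x \in D$. The base cases $n = 1$ (then $z = x_1 \in \{x_1\} \subset \conv(C \cup \{x_1\})$) and $n = 2$ are where the hypothesis enters: if $x_1 \neq x_2$, the segment $[x_1, x_2]$ meets $C$ at some point $c$, so $x_1, x_2 \in \conv(C \cup \{x_1\})$ and $\conv(C \cup \{x_2\})$ respectively — actually more simply, $z \in [x_1,x_2] = [x_1,c] \cup [c,x_2] \subset \conv(C \cup \{x_1\}) \cup \conv(C \cup \{x_2\})$; if $x_1 = x_2$ we are back to the $n=1$ case.

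For the inductive step with $n \geq 3$ (or $n \geq 2$), the cleanest route is: if all the $x_i$ coincide, $z$ is a single point of $D$ and we are done; otherwise relabel so that $x_1 \neq x_2$. Group the first two terms: write $z = (\lambda_1 + \lambda_2) w + \sum_{i=3}^n \lambda_i x_i$ where $w = \frac{\lambda_1}{\lambda_1+\lambda_2} x_1 + \frac{\lambda_2}{\lambda_1+\lambda_2} x_2 \in [x_1, x_2]$. By hypothesis $[x_1,x_2] \cap C \neq \emptyset$; pick $c$ in this intersection. Then $w \in [x_1, x_2]$, and I want to replace $w$ by points that let me reduce the count. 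The point is that $w$ lies on the segment $[x_1, x_2]$, which splits at $c$ into $[x_1, c]$ and $[c, x_2]$; so either $w \in [x_1, c] \subset \conv(C \cup \{x_1\})$ or $w \in [c, x_2] \subset \conv(C \cup \{x_2\})$. Say the former (the other case is symmetric). Then $z = (\lambda_1+\lambda_2) w + \sum_{i \geq 3} \lambda_i x_i$ with $w \in \conv(C \cup \{x_1\})$. Now $z$ is a convex combination of $w$ and the $x_i$'s ($i \geq 3$): replacing $w$ by the point $x_1$ and a point of $C$, we see $z \in \conv\bigl(\{x_1, x_3, \dots, x_n\} \cup C\bigr)$, i.e. $z$ is a convex combination of the $n-1$ points $x_1, x_3, \dots, x_n$ of $D$ together with points of $C$; collecting the $C$-part, $z$ has the form $\mu_0 c' + \sum_{j} \mu_j y_j$ with $c' \in C$, $y_j \in \{x_1, x_3, \dots, x_n\} \subset D$. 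Applying the inductive hypothesis to $\sum_j \tilde\mu_j y_j$ (the normalized $D$-part) — which is a convex combination of at most $n-1$ points of $D$ — gives some $x \in D$ with $\sum_j \tilde\mu_j y_j \in \conv(C \cup \{x\})$; hence $z = \mu_0 c' + (1-\mu_0)\sum_j \tilde\mu_j y_j \in \conv(C \cup \{x\})$ as well, since both $c'$ and the second term lie in the convex set $\conv(C \cup \{x\})$.

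I expect the only real subtlety to be bookkeeping: making sure that after absorbing a point of $C$ the remaining "$D$-mass" is a genuine convex combination of strictly fewer than $n$ points of $D$, so the induction actually decreases the parameter, and handling the degenerate subcases (some $\lambda_i = 0$ after regrouping, or all $x_i$ equal) without circularity. None of this is deep; the hypothesis "every segment between two distinct points of $D$ meets $C$" is exactly what is needed to peel off one $D$-point at a time while only adding $C$-points, and the statement as phrased — an inclusion into a union over $x \in D$ — is forgiving enough that we never need to track which $x$ we end up with.
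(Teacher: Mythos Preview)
Your argument is correct: the induction on the number $n$ of points of $D$ in a convex-combination representation of $z$ works exactly as you outline, and the key step---using the hypothesis to absorb one of two distinct points $x_1,x_2\in D$ into a point of $C$, thereby reducing the number of $D$-points by one---is sound. The only bookkeeping issue you flag (that $1-\mu_0>0$ so the normalized $D$-part is well defined) is easily handled since $\lambda_1+\lambda_2<1$ when $n\ge 3$ with all $\lambda_i>0$.

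There is nothing to compare with: the paper states this fact as elementary and explicitly leaves the proof to the reader.
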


In the sequel, if $B\subset E^*$,  we denote by $B'$ the set of all $w^*$-cluster points of $B$.
Let us recall that a set $\B\subset B_{E^*}$ is called 1-{\em norming} if for each $x\in E$ we have $\|x\|=\sup x(\B)$; using the Hahn-Banach theorem, it is easy to see that this is equivalent to say that $\cconv^{w^*}(\B)=B_{E^*}$. A set $\B\subset B_{E^*}$ is called {\em boundary} for $E$ if, for each $x\in E$ there exists $f\in \B$ such that $f(x)=\|x\|$.

Let us recall that the duality map $\D_E: S_E\to2^{S_{E^*}}$ is the function defined, for each $x\in S_E$, by
$$\D_E(x):=\{x^*\in S_{E^*};\, x^*x=1\}$$

\begin{definition} Let $K$ be a nonempty closed convex subset of $E$. 
	\begin{enumerate}
		\item An element  $x\in K$ is said to be an {\em extreme point} of $K$ if it does not lie in any ``open'' segment contained in $K$.
		\item By $\ext(K)$ we denote the set of all extreme points of $K$.
\item A {\em slice} of $K$ is a
set of the form
$$S(K,x^*,\alpha) = \{x\in K;\,  x^*(x)> \sup x(K)-\alpha\},$$
where $\alpha> 0$ and $x\in E^*\setminus\{0\}$ is bounded above on
$K$. 
		\item 
		An element  $x\in K$ is said  to be a {\em strongly exposed point} of $K$ if
		there exists  $x^*\in E^*\setminus\{0\}$ such that $x^*(x)=\sup x^*(K)$
		and for each norm neighbourhood $V$ of $x$ there exists $\alpha>0$ such that $S(K,x^*,\alpha)\subset
		V$. In this case, we say that $x$ is strongly exposed by $x^*$.
		\item We denote by $\mathrm{str\,exp}\,(K)$ the set of
		all strongly exposed points of $K$.
			\end{enumerate}
\end{definition}

\noindent We shall need the following  easy-to-prove fact. 

\begin{fact}\label{fact: exp} Let $C$ be a bounded subset of a normed space $E$. Let $x\in E$ and $f\in E^*$ be such that $f(x)>\sup f(C)$. Then $x$ is a strongly exposed point of the set $\cconv(C\cup\{x\})$ and it is strongly exposed by $f$.
\end{fact}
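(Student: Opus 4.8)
The plan is to produce an explicit linear estimate controlling the distance $\|y-x\|$ in terms of the ``functional gap'' $f(x)-f(y)$ for every $y$ in $K:=\cconv(C\cup\{x\})$; such an estimate instantly yields that $x$ is strongly exposed by $f$. First I would dispose of the degenerate situations: if $C=\emptyset$ then $K=\{x\}$ and there is nothing to prove, while $C=\{x\}$ is impossible because it would force $f(x)>\sup f(C)=f(x)$. So I may assume $C\neq\emptyset$ and set $\delta:=f(x)-\sup f(C)>0$ and $M:=\sup_{c\in C}\|c-x\|$, which is finite since $C$ is bounded and, in the non-degenerate case, strictly positive.

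Next I would use the standard decomposition of a convex combination: every $y\in\conv(C\cup\{x\})$ can be written as $y=(1-t)z+tx$ with $t\in[0,1]$ and $z\in\conv(C)$, by isolating the barycentric weight of $x$. For such $y$ one has, on the one hand, $f(y)=(1-t)f(z)+tf(x)\leq (1-t)\sup f(C)+tf(x)=f(x)-(1-t)\delta$, and, on the other hand, $\|y-x\|=(1-t)\|z-x\|\leq (1-t)M$. Combining these two inequalities gives $\|y-x\|\leq \tfrac{M}{\delta}\,\bigl(f(x)-f(y)\bigr)$. Since $f$ and the norm are continuous, this inequality passes to the closure and therefore holds for every $y\in K$; in particular $f(y)\leq f(x)$ on $K$, so $\sup f(K)=f(x)$ and $f$ attains its supremum over $K$ at $x$.

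Finally, given a norm neighbourhood $V$ of $x$, choose $r>0$ with $\{y:\|y-x\|<r\}\subset V$ and put $\alpha:=r\delta/M>0$. If $y\in S(K,f,\alpha)$, then $f(x)-f(y)<\alpha$, so the estimate above yields $\|y-x\|\leq \tfrac{M}{\delta}(f(x)-f(y))<r$, whence $S(K,f,\alpha)\subset V$. Together with $f(x)=\sup f(K)$, this is exactly the definition of $x$ being a strongly exposed point of $K$, exposed by $f$.

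I do not expect a genuine obstacle here. The only points requiring a little care are the degenerate cases $C=\emptyset$ and $M=0$, the elementary remark that the supremum of the convex function $z\mapsto\|z-x\|$ over $\conv(C)$ coincides with its supremum over $C$, and the passage of the displayed estimate to the closure $\cconv(C\cup\{x\})$ by continuity; all of these are routine.
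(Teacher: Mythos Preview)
Your proof is correct. The paper does not actually supply a proof of this fact---it merely labels it ``easy-to-prove'' and moves on---so there is nothing to compare against; your argument is exactly the standard elementary estimate one would expect, and it fills the omitted details cleanly.
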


A closed convex and bounded set $P\subset E$ is called {\em polytope} if every finite-dimen\-sional section of $P$ is a finite-dimen\-sional polytope, i.e., the convex hull of finitely many points (equivalently, if every finite-dimen\-sional section of $P$ is a finite-dimen\-sional polyhedron, i.e., the intersection of finitely many halfspaces).
We say that the Banach space $E$ is  \emph{polyhedral} if $B_E$ is a polytope. It is well-known \cite{Klee59} that $E$ is polyhedral if{f} the  unit ball of each of its 2-dimensional subspaces is a
polytope. In \cite{InfPoly} and \cite{durpap}, the authors classified the principal known notions of infinite-dimensional polyhedrality, let us recall the main definitions.

\begin{definition}[{\cite[Definition~1.1]{InfPoly}}]\label{def:poli}
	Let $E$ be a Banach space and let us consider the following properties of $E$.
	\begin{enumerate}
		\item[(I)] $(\ext B_{E^*})'\subset\{0\}$;
		\item[(II)] $(\ext B_{E^*})'\subset r B_{E^*}$ for some $0<r<1$;
		\item[(III)] $(\ext B_{E^*})'\subset B_{E^*}^0$;
		\item[(IV)] $f(x)<1$ whenever $x\in S_E$ and $f\in(\ext B_{E^*})'$;
		\item[(V)] $\sup\{f(x); f\in\ext B_{E^*}\setminus \D_E(x)\}<1$ for each $x\in S_E$;
		\item[(VI)] every $x\in S_E$ has a neighborhood $V$ such that, for each $y\in V\cap S_E$, the segment $[x,y]$ lies in $S_E$;
		\item[(VII)] the set $M_v:=\{x\in S_E: \max v\bigl(\D_E(x)\bigr)\leq0\}$ is open in $S_E$ for each direction $v\in S_E$;
		\item[(K)] $E$ is polyhedral.
	\end{enumerate}
	For $j\in\{\mathrm{I,II,III,IV,V,VI,VII}\}$, $E$ is called $(j)$-polyhedral if it satisfies  property $(j)$. Moreover, $E$  is said isomorphically $\mathrm{(j)}$-polyhedral if it admits an equivalent norm satisfying property $(j)$.
\end{definition}

\begin{definition}\label{def: delta}
	We  say that $E$ satisfies property $(\Delta)$ if, for each $x\in S_X$, the set $\ext\bigl(\D_E(x)\bigr)$ is finite.	
\end{definition}

It is well known (see \cite{InfPoly} and the references therein) that
$$\mathrm{(I)}\Rightarrow \mathrm{(II)}\Rightarrow\mathrm{(III)}\Rightarrow\mathrm{(IV)}\Rightarrow\mathrm{(V)}\Rightarrow\mathrm{(VI)}\Rightarrow\mathrm{(VII)}\Rightarrow
\mathrm{(K)}$$
and that none of the implications above can be reversed. Moreover, $\mathrm{(IV)}$ implies $(\Delta)$.

\begin{definition}\label{def:5poli} Let $E$ be a Banach space and $\B\subset B_{E^*}$. 
	We say that $\B$ satisfies property $\mathrm{(IV)}$ if{f}
	we have, 
	\begin{equation}\label{eq:(iv)} \sup \{f(x);\ f\in\B'\}<1,\ \ \  \hbox{whenever}\ \ x\in S_E.
	\end{equation}	
	We say that $\B$ satisfies property $\mathrm{(V)}$ if{f}
	we have
	\begin{equation}\label{eq:(v)} \sup \{f(x);\ f\in\B\setminus \D_E(x)\}<1,\ \ \  \hbox{whenever}\ \ x\in S_E.
	\end{equation}
	Clearly, $E$ is $\mathrm{(IV)}$-polyhedral [respectively, $\mathrm{(V)}$-polyhedral] if{f} the set
	$\ext B_{E^*}$ satisfies property $\mathrm{(IV)}$ [respectively, property $\mathrm{(V)}$].
	
\end{definition}

\begin{remark}\label{remark:equiv(v)}
	Proceeding as in \cite[Section~2.4]{InfPoly}, it is immediate to prove that condition (\ref{eq:(v)})
	above is equivalent to the following condition:
	$$\bigl(\B\setminus \D_E(x)\bigr)'\cap \D_E(x)=\emptyset\ \ \  \hbox{whenever}\ \ x\in S_E.$$
	
\end{remark}

We shall say that a set $F\subset S_E$ is a {\em true face} of $B_E$ if there exists $f\in S_{X^*}$ such that, if  $H=f^{-1}(1)$, then $F=H\cap B_E$ and  $\inte_H F$ (the relative topological interior of $F$ in $H$) is nonempty. The set $\inte_H F$ will be simply called the {\em interior of the true face $F$}.
 We have the following important structural result for polyhedral Banach spaces (see \cite{fonfstrutt, fonfstruttnew, veselystrutt}).

\begin{theorem}\label{th: structure}
	Let $E$ be a polyhedral Banach space. Then the sphere $S_E$ is covered by the true faces of $B_E$. Hence the set 
	$$\B_0=\{f\in S_{E^*}; f^{-1}(1)\cap B_E\ \text{is a true face of}\ B_E  \}$$
	is a boundary for $E$. Moreover, $B_{E^*}=\cconv\B_0$ and $\mathrm{card}(\B_0)=\mathrm{dens}(E)=\mathrm{dens}(E^*)$, where $\mathrm{dens}(E)$ and $\mathrm{dens}(E^*)$ denote the  density character w.r.t. the norm topology of $E$ and $E^*$, respectively.
\end{theorem}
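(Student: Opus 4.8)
The statement packages four assertions of decreasing depth: (a) the true faces of $B_E$ cover $S_E$; (b) $\B_0$ is a boundary; (c) $B_{E^*}=\cconv^{w^*}\B_0$; (d) $\mathrm{card}(\B_0)=\mathrm{dens}(E)=\mathrm{dens}(E^*)$. The plan is to establish them in that order, since (b) follows at once from (a), (c) from (b), and (d) is a counting argument needing only (a)--(c). The real content is (a) --- this is V.P.~Fonf's structural theorem --- and I would follow \cite{fonfstrutt, fonfstruttnew, veselystrutt} for it; this is the one place I expect a genuine obstacle.

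For (a), fix $x_0\in S_E$; I want an $f\in S_{E^*}$ with $f(x_0)=1$ for which $F_f:=f^{-1}(1)\cap B_E$ has nonempty interior in the hyperplane $f^{-1}(1)$, for then $x_0$ lies in the true face $F_f$. I would argue by contradiction: assuming that for every $f\in\D_E(x_0)$ the face $F_f$ has empty interior in $f^{-1}(1)$, I would produce a two-dimensional subspace $Y\subset E$ whose unit ball is not a polygon, contradicting polyhedrality via Klee's $2$-dimensional criterion \cite{Klee59} (recalled in Section~\ref{prelim}). The mechanism: ``thinness'' of every $F_f$ lets one escape repeatedly --- given supporting functionals $g_1,\dots,g_k$ of $B_E$ at points near $x_0$ and a finite-dimensional $G\subset E$, there is a point $y\in S_E$ as close to $x_0$ as desired, sitting in a slightly larger finite-dimensional subspace, at which none of $g_1,\dots,g_k$ is norming, hence carrying a genuinely new supporting functional $g_{k+1}$. (In finite-dimensional sections this is a Baire-type fact: a neighbourhood of $x_0$ in the polytope sphere $\partial B_G$ cannot be covered by finitely many faces of $B_G$ that are all nowhere dense in $\partial B_G$, and emptiness of interior of the $F_{g_i}$ forbids any such face from being a facet.) Iterating over an increasing chain $E_1\subset E_2\subset\cdots$ of finite-dimensional subspaces with $x_0\in E_1$ gives points $y_k\in E_k\cap S_E$ with $y_k\to x_0$ and pairwise distinct supporting functionals $g_k$ (with $g_k(y_k)=1$ and $g_k(y_j)<1$ for $j<k$); a compactness and diagonalization argument then extracts a limit direction $v$ and concentrates, in $Y=\mathrm{span}\{x_0,v\}$, infinitely many distinct extreme points of $B_Y$ accumulating at $x_0$. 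The hard part --- the obstacle --- is the bookkeeping needed to keep infinitely many of these new vertices alive in one common two- (or at worst finite-)dimensional section; I do not expect the remainder of (a) to be troublesome.

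Given (a), every $x\in S_E$ lies in some true face $f^{-1}(1)\cap B_E$ with $f\in S_{E^*}$; then $f\in\B_0$ and $f(x)=1=\|x\|$, so by homogeneity each nonzero $x\in E$ is normed by an element of $\B_0$, proving (b). In particular $\B_0$ is $1$-norming, $\|x\|=\sup x(\B_0)$ for all $x\in E$, and as recalled in Section~\ref{prelim} a Hahn--Banach (bipolar) argument converts this into $B_{E^*}=\cconv^{w^*}\B_0$, which is (c).

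Finally, for (d) put $\Gamma=\mathrm{dens}(E)$. Each true face lies in $S_E$, hence has density at most $\Gamma$; moreover, at an interior point $y$ of a true face $F_f$ where no other ``competing'' functional is norming, $S_E$ coincides with $f^{-1}(1)$ near $y$, so the sets of such ``core'' points of the distinct true faces are pairwise disjoint, relatively open in $S_E$, and have union equal to $S_E$ (by (a)); hence there are at most $\mathrm{dens}(S_E)=\Gamma$ of them, i.e.\ $\mathrm{card}(\B_0)\le\Gamma$. The reverse inequality, together with the equality $\mathrm{dens}(E^*)=\Gamma$, follows from a finer analysis of the same true-face structure (see \cite{fonfstrutt}); since $\mathrm{dens}(E)\le\mathrm{dens}(E^*)$ in any Banach space, one concludes $\mathrm{card}(\B_0)=\mathrm{dens}(E)=\mathrm{dens}(E^*)$.
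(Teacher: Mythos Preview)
The paper does not prove Theorem~\ref{th: structure}; it is quoted as a known structural result with references \cite{fonfstrutt, fonfstruttnew, veselystrutt}, so there is no in-paper argument to compare your proposal against. Your outline for (a) is broadly in the spirit of those references (Fonf's two-dimensional-section contradiction), and (b) is immediate from (a) as you say.

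One genuine gap to flag in your sketch: for (c) the theorem asserts $B_{E^*}=\cconv\B_0$ with \emph{norm} closure, but the $1$-norming/bipolar argument you invoke only yields the $w^*$-closed convex hull. Passing from $w^*$-closure to norm closure is not automatic and is part of the content of the cited results; it uses the polyhedral structure (roughly, that a boundary of a polyhedral space is already ``large'' enough in norm, cf.\ Fonf's work on boundaries). As written, your (c) proves a weaker statement than the one claimed. Similarly, in (d) you defer the inequality $\mathrm{card}(\B_0)\ge\Gamma$ and the equality $\mathrm{dens}(E^*)=\Gamma$ entirely to the literature, so that part is a citation rather than an argument.
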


Observe that the boundary $\B_0$ from Theorem~\ref{th: structure} is minimal, in the sense that each boundary of $E$ contains $\B_0$. In the sequel we shall simply say that $\B_0$ is the {\em minimal boundary} of $E$. Moreover, Theorem~\ref{th: structure} implies  that polyhedral spaces are in particular Asplund spaces. 
The following result holds.

\begin{proposition}[{\cite[Lemma~2]{veselystrutt}}]\label{prop: gateaux-frechet-truefaces} Let $E$ be a polyhedral Banach space and $x_0\in S_E$. Then the following assertions are equivalent.
	\begin{enumerate}
		\item $x_0$ is in  the interior of a true face of $B_E$;\item $x_0$ is a point of Fr\'echet differentiability of $B_E$;
		\item $x_0$ is a point of  G\^ateaux differentiability of $B_E$. 
	\end{enumerate}
	
\end{proposition}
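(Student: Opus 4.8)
The plan is to prove the three conditions equivalent via the cycle $(\mathrm i)\Rightarrow(\mathrm{ii})\Rightarrow(\mathrm{iii})\Rightarrow(\mathrm i)$. The implication $(\mathrm{ii})\Rightarrow(\mathrm{iii})$ is free, since Fr\'echet differentiability always implies G\^ateaux differentiability. For $(\mathrm i)\Rightarrow(\mathrm{ii})$ I would show that the norm is in fact \emph{locally affine} near $x_0$: write $x_0\in\inte_H F$ with $f\in S_{E^*}$, $H=f^{-1}(1)$, $F=H\cap B_E$; since $\inte_H F$ is a relative neighbourhood of $x_0$ in $H$, there is $r>0$ with $x_0+w\in B_E$ for all $w\in\ker f$, $\norm w\le r$. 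Given $h\in E$ with $\norm h$ small, put $\lambda:=1+f(h)>0$; then $\tfrac1\lambda(x_0+h)=x_0+w$ with $w=\tfrac1\lambda\bigl(h-f(h)x_0\bigr)\in\ker f$ and $\norm w\le 2\norm h/\lambda\le r$, so $\tfrac1\lambda(x_0+h)\in B_E$, i.e.\ $\norm{x_0+h}\le 1+f(h)$; the reverse inequality is trivial since $f\in B_{E^*}$. Hence $\norm x=f(x)$ for all $x$ in a neighbourhood of $x_0$, so the norm is Fr\'echet differentiable at $x_0$, with derivative $f$. All of the content therefore sits in $(\mathrm{iii})\Rightarrow(\mathrm i)$, and this is where polyhedrality — through Theorem~\ref{th: structure} — is used.

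For $(\mathrm{iii})\Rightarrow(\mathrm i)$ I would first recall the standard fact that the norm is G\^ateaux differentiable at $x_0$ if and only if $\D_E(x_0)$ is a singleton, say $\D_E(x_0)=\{f\}$. By Theorem~\ref{th: structure}, $x_0$ lies in some true face $F=H\cap B_E$ of $B_E$; its supporting functional lies in $\D_E(x_0)$, hence equals $f$, so $H=f^{-1}(1)$ and $\inte_H F\ne\emptyset$. The point of the proof is to upgrade ``$x_0\in F$'' to ``$x_0\in\inte_H F$'', which I would do by contradiction: if $x_0\notin\inte_H F$, a Hahn--Banach separation of $x_0$ from the relatively open, nonempty set $\inte_H F$ \emph{carried out inside the affine hyperplane $H$} yields a direction $u\in\ker f\setminus\{0\}$ with $\norm{x_0+tu}>1$ for all $t>0$. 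Fix also $y_0\in\inte_H F$ and set $v_0:=y_0-x_0\in\ker f\setminus\{0\}$.

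Next I would descend to the finite-dimensional subspace $W:=\mathrm{span}\{x_0,u,v_0\}$, so that $P:=B_E\cap W$ is a finite-dimensional polytope with $0\in\inte_W P$. Since $y_0\in\inte_H F$, the set $F\cap W$ contains a relatively open subset of the hyperplane $f^{-1}(1)\cap W$ of $W$, so $F\cap W$ is a facet of $P$, exposed by the norm-one functional $f|_W$; and since $x_0+tu\in f^{-1}(1)\cap W$ but $x_0+tu\notin P$ for $t>0$, the point $x_0$ is not in the relative interior of this facet. Because $f|_W$ is already a norm-one supporting functional of $P$ at $x_0$, a short argument then shows $x_0$ lies in the relative interior of \emph{no} facet of $P$. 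By the elementary fact that the smooth boundary points of a finite-dimensional polytope are exactly the relative interiors of its facets, $\D_W(x_0)$ is not a singleton, so I can pick $\widetilde g\in\D_W(x_0)$ with $\widetilde g\ne f|_W$. A norm-preserving Hahn--Banach extension $\widehat g\in E^*$ of $\widetilde g$ satisfies $\widehat g(x_0)=1=\norm{\widehat g}$, hence $\widehat g\in\D_E(x_0)$, while $\widehat g\ne f$ because $\widehat g|_W=\widetilde g\ne f|_W$; this contradicts $\D_E(x_0)=\{f\}$ and closes the cycle.

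The step I expect to be the main obstacle is precisely this passage from ``belongs to a true face'' to ``belongs to the interior of a true face'': Theorem~\ref{th: structure} covers $S_E$ only by the true faces themselves, not by their interiors, so one genuinely has to descend to finite-dimensional sections and exploit the polytope dichotomy — uniqueness of the supporting functional versus membership in the relative interior of a facet — in order to produce a second element of $\D_E(x_0)$. The rescaling in $(\mathrm i)\Rightarrow(\mathrm{ii})$ and the separation producing $u$ are routine; the one delicate point in the last step is to ensure that the finite-dimensional $W$ really detects that $x_0$ sits on the boundary (not the interior) of the face $F$, which is exactly why the direction $u$ must be built into $W$.
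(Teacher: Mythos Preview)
The paper does not prove this proposition; it is quoted from \cite[Lemma~2]{veselystrutt} without argument, so there is no in-paper proof to compare against. Your proof is correct. The local-affinity computation for $(\mathrm i)\Rightarrow(\mathrm{ii})$ is clean and standard, and the $(\mathrm{iii})\Rightarrow(\mathrm i)$ step is sound: the supporting-hyperplane separation in $H$ legitimately produces the outward direction $u$ (since $F$ has nonempty interior in $H$ and $x_0\in\partial_H F$, translate $x_0$ to the origin in $\ker f$ and apply Hahn--Banach to the convex body $F-x_0$), and including $v_0=y_0-x_0$ in $W$ guarantees that $F\cap W$ is a genuine facet of the finite-dimensional polytope $P=B_E\cap W$, so that the polytope dichotomy ``smooth boundary point $\Leftrightarrow$ relative interior of a facet'' applies directly. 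The Hahn--Banach extension then transports the second supporting functional back to $E$, contradicting $\D_E(x_0)=\{f\}$.

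One remark on dependencies: you invoke Theorem~\ref{th: structure} to obtain $\inte_H F\ne\emptyset$ (equivalently, to produce $y_0$). In Vesel\'y's paper \cite{veselystrutt} the lemma you are proving is a step \emph{towards} an alternative proof of the structure theorem, so the original argument presumably avoids this appeal. Within the present paper, however, both results are imported from the literature, so there is no circularity in your use of Theorem~\ref{th: structure}.
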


In the sequel, we shall need the following consequence of Theorem~\ref{th: structure}. 

\begin{corollary}\label{cor: cardinalityboundary}
	Suppose that $D_0$ is a polytope with nonempty interior in a Banach space $E$ and suppose that there exist families $\{g_i\}_{i\in I}\subset E^*$ and $\{a_i\}_{i\in I}\subset \R$ such that:
	\begin{enumerate}
		\item $D_0=\bigcap_{i\in I}\{x\in E;\, g_i(x)\leq a_i\}$;
		\item for each $d\in\partial D_0$, there exists $i\in I$ such that $g_i(d)=a_i$.
	\end{enumerate}
Then $\mathrm{card}(I)\geq\mathrm{dens}(E)$. 
\end{corollary}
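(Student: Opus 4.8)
The plan is to realize a symmetrization of $D_0$ as the closed unit ball of an equivalent polyhedral norm on $E$, to use hypothesis (ii) to extract from the functionals $g_i$ a boundary of that norm, and then to apply Theorem~\ref{th: structure} together with the minimality of the associated minimal boundary $\B_0$.

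First I would make the obvious normalizations. After a translation we may assume $0\in\inte D_0$; then $B(0,\delta)\subseteq D_0$ for some $\delta>0$, so $a_i\geq\delta\|g_i\|>0$ whenever $g_i\neq0$. The indices with $g_i=0$ are redundant in (i), and discarding them does not affect (ii) (the potentially problematic case $g_i=a_i=0$, in which (ii) is vacuous, does not arise in our application), so we may assume $g_i\neq0$ for all $i$ and set $f_i:=g_i/a_i\in E^*$, giving $D_0=\bigcap_{i\in I}\{x;\ f_i(x)\leq1\}$. Note that $I$ is infinite, since a finite intersection of half-spaces in an infinite-dimensional space contains a line and so cannot equal the bounded set $D_0$. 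Now put $\tilde D_0:=D_0\cap(-D_0)=\bigcap_{i\in I}\{x;\ |f_i(x)|\leq1\}$. Each finite-dimensional section of $\tilde D_0$ is the intersection of two finite-dimensional polytopes, hence a polytope, so $\tilde D_0$ is a symmetric bounded polytope with $0$ in its interior; its Minkowski gauge $\|x\|:=\sup_{i\in I}|f_i(x)|$ is thus an equivalent norm. Writing $Y:=(E,\|\cdot\|)$ we have $B_Y=\tilde D_0$, $Y$ is polyhedral, $\mathrm{dens}(Y)=\mathrm{dens}(E)$, and $\|f_i\|_{Y^*}=\sup_{x\in\tilde D_0}|f_i(x)|\leq1$ for each $i$.

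The key step is to show that $\mathcal{G}:=\{\pm f_i;\ i\in I\}$ is a boundary for $Y$. Given $x\in S_Y=\partial\tilde D_0$, from $\inte\tilde D_0=\inte D_0\cap(-\inte D_0)$ we get that either $x\in\partial D_0$ or $-x\in\partial D_0$. In the first case (ii) yields $i$ with $f_i(x)=1$, and then $1=f_i(x)\leq\|f_i\|_{Y^*}\|x\|\leq1$ forces $\|f_i\|_{Y^*}=1$, so $f_i\in\mathcal{G}$ attains its norm at $x$; in the second case (ii) applied to $-x$ gives $i$ with $(-f_i)(x)=1=\|x\|$, so $-f_i\in\mathcal{G}$ attains its norm at $x$. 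Hence $\mathcal{G}$ is a boundary for $Y$. Applying Theorem~\ref{th: structure} to $Y$, its minimal boundary $\B_0$ has $\mathrm{card}(\B_0)=\mathrm{dens}(Y)=\mathrm{dens}(E)$, and since (as observed after Theorem~\ref{th: structure}) every boundary of $Y$ contains $\B_0$, we obtain $\B_0\subseteq\mathcal{G}$ and therefore
\[
\mathrm{dens}(E)=\mathrm{card}(\B_0)\leq\mathrm{card}(\mathcal{G})\leq 2\,\mathrm{card}(I)=\mathrm{card}(I),
\]
the last equality holding because $I$ is infinite.

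I expect the only genuinely delicate point to be the passage from the possibly non-symmetric body $D_0$ to the symmetric $\tilde D_0$: one must check that hypothesis (ii), which concerns $\partial D_0$, still delivers a norming functional at every point of $\partial\tilde D_0=S_Y$, and this is exactly what the inclusion $\partial\tilde D_0\subseteq\partial D_0\cup(-\partial D_0)$ provides. After that, Fonf's structural theorem and the minimality of $\B_0$ do all the work; the preliminary reduction to $g_i\neq0$ is routine bookkeeping.
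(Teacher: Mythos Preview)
Your proof is correct and follows essentially the same route as the paper: normalize to $0\in\inte D_0$ and $a_i=1$, symmetrize to obtain an equivalent polyhedral norm with unit ball $D_0\cap(-D_0)$, and then show that the minimal boundary $\B_0$ sits inside $\{\pm g_i\}_{i\in I}$, so that Theorem~\ref{th: structure} gives $\mathrm{card}(I)\geq\mathrm{card}(\B_0)=\mathrm{dens}(E)$. The only cosmetic difference is that the paper verifies $\B_0\subset\{\pm g_i\}$ directly (pick a smooth point $x_0$ in the interior of a true face, apply (ii) to get some $g_i$ with $g_i(x_0)=1$, and use uniqueness of the supporting functional to conclude $g_i=f$), whereas you first show $\{\pm f_i\}$ is a boundary and then invoke the minimality of $\B_0$ stated after Theorem~\ref{th: structure}; these are two packagings of the same argument. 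Your handling of the degenerate indices $g_i=0$ is in fact slightly more explicit than the paper's, which simply writes ``we can suppose that $a_i=1$''.
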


\begin{proof}
	Without any loss of generality, we can suppose that $0\in\inte D_0$ and that $a_i=1$, whenever $i\in I$. Without any loss of generality, we can also suppose that the norm of $E$ is polyhedral and that the closed unit ball $B_E$ coincides with the set $D_0\cap (-D_0)$. Let us denote by $\mathcal B_0$ the minimal boundary of $E$, we claim that $\mathcal B_0\subset \{\pm g_i\}_{i\in I}$. In order to prove the claim, let $f\in\mathcal B_0$, $H=f^{-1}(1)$, $S=H\cap B_E$, and $x_0\in \mathrm{int}_H S$. Since $S_E\subset \partial D_0\cup \partial(-D_0)$, we can suppose that $x_0\in\partial D_0$, the case in which $x_0\in\partial(-D_0)$ can be treated similarly. Let $i\in I$ be such that $g_i(x_0)=1=\sup g_i(D_0)$; since $B_E\subset D_0$, we have that $\sup g_i(B_E)=1=g_i(x_0)$. Hence $g_i=f$ and the claim is proved. By our claim and  Theorem~\ref{th: structure}, we have $\mathrm{card}(I)\geq\mathrm{card}(\mathcal B_0)=\mathrm{dens}(E)$.
\end{proof}

The following lemma is an analogue of \cite[Remark~1.4]{FonfLindVes}, stated here for both properties $\mathrm{(IV)}$ and $\mathrm{(V)}$.  For the proof concerning property $\mathrm{(V)}$ see also \cite[Lemma~3.4]{DEPOLY}.

\begin{lemma}\label{lemma:5poli1normanti} Let $E$ be a Banach space. Then the following conditions are equivalent.
	\begin{enumerate}
		\item $E$ is $\mathrm{(IV)}$-polyhedral [respectively, $\mathrm{(V)}$-polyhedral]. \item There exists a
		1-norming set $\B\subset B_{E^*}$ satisfying property $\mathrm{(IV)}$ [respectively, property $\mathrm{(V)}$].
		\item $E$ is polyhedral and the minimal boundary $\B_0$ satisfies property $\mathrm{(IV)}$ [respectively, property $\mathrm{(V)}$].
	\end{enumerate}
\end{lemma}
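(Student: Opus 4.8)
The plan is to prove the chain of equivalences $(i)\Rightarrow(ii)\Rightarrow(iii)\Rightarrow(i)$, treating the $\mathrm{(IV)}$ and $\mathrm{(V)}$ cases in parallel since the arguments are structurally identical. The implication $(i)\Rightarrow(ii)$ is essentially trivial: if $E$ is $\mathrm{(IV)}$-polyhedral [resp. $\mathrm{(V)}$-polyhedral], then by Definition~\ref{def:5poli} the set $\B=\ext B_{E^*}$ satisfies property $\mathrm{(IV)}$ [resp. $\mathrm{(V)}$], and it is 1-norming because $\cconv^{w^*}(\ext B_{E^*})=B_{E^*}$ by the Krein--Milman theorem (recall $B_{E^*}$ is $w^*$-compact). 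Similarly, $(iii)\Rightarrow(i)$ is immediate from Definition~\ref{def:5poli}, since $\B_0$ being 1-norming (it is a boundary by Theorem~\ref{th: structure}) and satisfying property $\mathrm{(IV)}$ [resp. $\mathrm{(V)}$] gives, via the inclusion $\ext B_{E^*}\subset\overline{\B_0}^{w^*}$ — wait, more carefully, one needs that property $\mathrm{(IV)}$/$\mathrm{(V)}$ of $\B_0$ transfers to $\ext B_{E^*}$; I address this below as part of the core step.

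The substantive implication is $(ii)\Rightarrow(iii)$, and this is where I expect the main work. Suppose $\B\subset B_{E^*}$ is 1-norming and satisfies property $\mathrm{(IV)}$ [resp. $\mathrm{(V)}$]. The first task is to show $E$ is polyhedral. For this I would invoke (or reprove along the lines of \cite{FonfLindVes}) the fact that a 1-norming set $\B$ with $\sup\{f(x):f\in\B'\}<1$ for all $x\in S_E$ forces $B_E$ to be a polytope: given any two-dimensional subspace $Y$, the restriction argument plus a compactness/Baire-category argument on $\B$ shows $B_Y$ is a polygon, hence $E$ is polyhedral by the Klee criterion \cite{Klee59} recalled in the text. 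For the $\mathrm{(V)}$ case this polyhedrality is exactly \cite[Lemma~3.4]{DEPOLY}, which we are allowed to cite. The second, more delicate task is to pass from the given $\B$ to the minimal boundary $\B_0$. Here the key observation is that, for a polyhedral space, $\B_0$ is contained in \emph{every} boundary, and moreover every 1-norming set must contain all functionals supporting true faces at interior points; more precisely, if $f\in\B_0$ with $f^{-1}(1)\cap B_E$ a true face having interior point $x_0$, then $x_0$ is a Fréchet (hence Gâteaux) smooth point by Proposition~\ref{prop: gateaux-frechet-truefaces}, so the supporting functional at $x_0$ is unique; since $\B$ is 1-norming, $\sup f_n(x_0)\to 1$ for some $f_n\in\B$, and by $w^*$-compactness of $B_{E^*}$ and uniqueness of the support functional at the smooth point $x_0$, either $f\in\B$ or $f\in\B'$. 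The property $\mathrm{(IV)}$ [resp. $\mathrm{(V)}$] of $\B$ must then be shown to rule out the bad behaviour and push down to $\B_0$: for $\mathrm{(IV)}$ one shows $\B_0\subset\B\cup\B'$ and hence $\B_0'\subset\overline{\B\cup\B'}^{w^*}=\overline{\B}^{w^*}$, so $\sup\{f(x):f\in\B_0'\}\le\sup\{f(x):f\in\overline{\B}^{w^*}\}$ — and here one needs the standard fact (from \cite{InfPoly}) that property $\mathrm{(IV)}$ for $\B$ is equivalent to $\sup\{f(x):f\in\overline{\B}^{w^*}\setminus r B_{E^*}\}<1$ type statements, letting the supremum over $\B_0'$ be controlled. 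For $\mathrm{(V)}$, one uses Remark~\ref{remark:equiv(v)}: $(\B\setminus\D_E(x))'\cap\D_E(x)=\emptyset$, and one checks $\B_0\setminus\D_E(x)\subset(\B\setminus\D_E(x))\cup(\text{something with empty intersection with }\D_E(x))$.

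The hard part, concretely, will be the bookkeeping around $w^*$-cluster points: showing that $\B_0\subset\B\cup\B'$ (so that the "defect" of $\B_0$ relative to $\B$ lands inside $\B'$, which is already controlled by property $\mathrm{(IV)}$/$\mathrm{(V)}$ of $\B$), and then verifying that taking cluster points of $\B_0$ does not escape the region where the relevant supremum stays strictly below $1$. I would organize this as: (a) $E$ polyhedral (cite/adapt \cite{DEPOLY,FonfLindVes}); (b) each $f\in\B_0$ is a $w^*$-limit of a net in $\B$ (by 1-norming plus smoothness of true-face interior points via Proposition~\ref{prop: gateaux-frechet-truefaces}), hence $\B_0\subset\overline{\B}^{w^*}$ and in fact $\B_0\subset\B\cup\B'$; (c) deduce $\B_0'\subset\B'\cup(\B')'=\B'$ wait — more carefully $\B_0'\subset(\overline{\B}^{w^*})'\subset\overline{\B'}^{w^*}$ since cluster points of a subset of $\overline{\B}^{w^*}$ are cluster points of $\B$; (d) conclude property $\mathrm{(IV)}$ for $\B_0$ from property $\mathrm{(IV)}$ for $\B$ using $w^*$-compactness and the fact that the relevant supremum is attained; (e) for $\mathrm{(V)}$, run the analogous argument with the Remark~\ref{remark:equiv(v)} reformulation, where the extra care is that $\D_E(x)$ is a $w^*$-closed set and $\B_0\setminus\D_E(x)$ relates to $\B\setminus\D_E(x)$ modulo a set disjoint from $\D_E(x)$. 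This modular structure matches the referenced \cite[Remark~1.4]{FonfLindVes} and \cite[Lemma~3.4]{DEPOLY}, so I expect each module to be short, with the only genuine subtlety being step (b)–(c).
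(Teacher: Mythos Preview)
The paper does not actually prove this lemma; it merely cites \cite[Remark~1.4]{FonfLindVes} (for both properties) and \cite[Lemma~3.4]{DEPOLY} (for property $\mathrm{(V)}$). So there is no in-paper argument to compare against beyond the signal that the proof is meant to be short and essentially known.

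Your outline is broadly workable but misses the one observation that makes those cited proofs short. A $1$-norming set $\B$ with property $\mathrm{(IV)}$ or $\mathrm{(V)}$ is automatically a \emph{boundary}: given $x\in S_E$, if $\sup_{f\in\B}f(x)=1$ were not attained, a net in $\B\setminus\D_E(x)$ would $w^*$-accumulate at some $g\in\D_E(x)$; then $g\in\B'\cap\D_E(x)$ contradicts $\mathrm{(IV)}$, while $g\in(\B\setminus\D_E(x))'\cap\D_E(x)$ contradicts $\mathrm{(V)}$ via Remark~\ref{remark:equiv(v)}. Once $\B$ is a boundary, minimality gives $\B_0\subset\B$ outright, so $\B_0'\subset\B'$ and $\B_0\setminus\D_E(x)\subset\B\setminus\D_E(x)$, and the property passes to $\B_0$ in one line. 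Your route through smooth points yields only $\B_0\subset\B\cup\B'$, which then forces the derived-set juggling of your step~(c), where your own ``wait'' annotations show the strain. That argument can in fact be rescued---$(\overline{\B}^{w^*})'\subset\overline{\B'}^{w^*}$ holds, and the strict bound $\sup\{f(x):f\in\B'\}\le 1-\delta$ passes to the $w^*$-closure because sublevel sets of evaluation at $x$ are $w^*$-closed---but it is the long way round.

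There is also a genuine gap: you never close the implication $(iii)\Rightarrow(i)$. You correctly notice it is not immediate and promise to ``address this below as part of the core step'', but your core step transfers the property from $\B$ \emph{down} to $\B_0$, not from $\B_0$ \emph{up} to $\ext B_{E^*}$. The fix is the same closure argument: Milman's converse gives $\ext B_{E^*}\subset\overline{\B_0}^{w^*}$, and since $\D_E(x)$ is $w^*$-closed, the bound $\sup\{f(x):f\in\B_0\setminus\D_E(x)\}\le 1-\delta$ (resp.\ $\sup\{f(x):f\in\B_0'\}\le 1-\delta$) survives passage to $\overline{\B_0}^{w^*}\setminus\D_E(x)$ (resp.\ to $(\overline{\B_0}^{w^*})'$). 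You should state this explicitly rather than leave $(iii)\Rightarrow(i)$ dangling.
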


 In the sequel, we shall need the following notion of polyhedrality. 

\begin{definition}[Property~($**$)]\label{def: property**} Let $E$ be a polyhedral space and let $\mathcal B_0$ its minimal boundary. We shall say that $E$ satisfies property $(**)$ if 
	\begin{equation}\label{eq:(**)} \sup \{f(x);\ f\in\mathcal B_0\setminus \D_E(x)\}<1,\ \ \  \hbox{whenever}\ \ x\in \mathrm{FD}(S_E),
	\end{equation}
	where  $\mathrm{FD}(S_E)$ denotes the set of all elements in $S_E$ that are Fr\'echet differentiability  points of $B_E$. 
\end{definition}

\section{The main construction}\label{sec: mainconstruction}

In this section, we suppose that $\epsilon>0$ and  that $E$ is an infinite-dimensional
polyhedral Banach space satisfying property~($**$). The norm of $E$ will be denoted by $\|\cdot\|$. For the sake of clearness we divide our construction in several subsections.

\subsection{Auxiliary results}

By Theorem~\ref{th: structure}, the unit sphere $S_E$ is covered by the true faces of $B_E$, let $\B_0$ be the corresponding minimal boundary. Let $\Gamma=\mathrm{card}(\B_0)$ and let us represent $\Gamma$ as the interval of ordinals $[0,\Gamma)$. Clearly, $\B_0$ can be represented as the disjoint union of two families $$\{f^+_\gamma; {\gamma\in[0,\Gamma)} \},\qquad \{f^-_\gamma; {\gamma\in[0,\Gamma)} \},$$
such that $f^+_\gamma=-f^-_\gamma$, whenever $\gamma\in[0,\Gamma)$. 
For each $\gamma\in[0,\Gamma)$, let us denote $$H^+_\gamma=(f^+_\gamma)^{-1}(1),\quad S^+_\gamma=H^+_\gamma\cap B_E,\quad  H^-_\gamma=(f^-_\gamma)^{-1}(1),\quad S^-_\gamma=H^-_\gamma\cap B_E.$$
By our hypotheses and by Theorem~\ref{th: structure}, we have $\Gamma=\mathrm{dens}(E)=\mathrm{dens}(E^*)$.
\smallskip 

Roughly speaking, the next lemma asserts that, if we do not remove  too much true faces from the unit sphere $S_E$, then the unit ball $B_E$ coincides with the closed convex hull of the remaining true faces. 

\begin{lemma}\label{lemma: removefaces}
	Suppose that $\mathcal C\subset [0,\Gamma)$ is such that $\mathrm{card}(\mathcal C)<\Gamma$, then
	\begin{equation}\label{eq: removefaces}
		B_E=\cconv\left(\bigcup_{w\in [0,\Gamma)\setminus \mathcal{C}}[S_w^+\cup S_w^-]\right).
	\end{equation} 
	In particular,  for each $z\in B_E$ and each $\theta>0$, there exist $A$, a finite nonempty subset of $[0,\Gamma)\setminus\mathcal C$, and  $G$, a finite nonempty set of   
	$$ \bigcup_{w\in A}[\mathrm{int}_{H^+_w}(S^+_w)\cup\mathrm{int}_{H^-_w}(S^-_w)],$$
	such that  $z\in\conv(G)+\theta B_E$ and such that, for each $w\in A$, the sets $G\cap \mathrm{int}_{H^+_w}(S^+_w)$ and $G\cap \mathrm{int}_{H^-_w}(S^-_w)$ are singletons.
\end{lemma}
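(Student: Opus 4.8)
The plan is to prove the first assertion \eqref{eq: removefaces} by a Hahn--Banach separation argument, and then to deduce the "in particular" part from it together with Proposition~\ref{prop: gateaux-frechet-truefaces}. Write $K:=\cconv\bigl(\bigcup_{w\in[0,\Gamma)\setminus\mathcal C}[S_w^+\cup S_w^-]\bigr)$. Clearly $K\subset B_E$ since each $S_w^\pm\subset B_E$ and $B_E$ is closed and convex. For the reverse inclusion, suppose towards a contradiction that there is $x_0\in B_E\setminus K$. Separate: there is $g\in S_{E^*}$ and $\alpha>0$ with $g(x_0)>\alpha\geq\sup g(K)$; in particular $g(S_w^+\cup S_w^-)\leq\alpha<1$ for every $w\in[0,\Gamma)\setminus\mathcal C$, hence $\sup g(S_w^\pm)<1$. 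The point is that $g$ attains its norm on $S_E$ (it has value $>\alpha$ at $x_0\in B_E$, so $\|g\|=1$ and any norm-attaining direction has norm-value $1$, strictly larger than $\sup g$ over the kept faces), and the structural Theorem~\ref{th: structure} forces this norm-attainment to happen at an interior point of a true face $S_v^{\pm}$ for some $v$. Since $\sup g(S_w^\pm)<1$ for all $w\notin\mathcal C$, that index $v$ must lie in $\mathcal C$. So the "bad" functional $g$ corresponds, on a dense (boundary) set of directions, only to faces indexed by $\mathcal C$ — but $\mathrm{card}(\mathcal C)<\Gamma=\mathrm{card}(\B_0)=\mathrm{dens}(E^*)$. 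To turn this into a contradiction I would argue that the set of directions whose supporting functionals (from the minimal boundary) are "blocked" by $\mathcal C$ is too small to exhaust $S_E$: more precisely, $S_E=\bigcup_w[S_w^+\cup S_w^-]$, and removing the $S_w^\pm$ with $w\in\mathcal C$ still leaves a set whose closed convex hull is all of $B_E$. The cleanest route is probably: let $\B_0'=\{f_w^\pm: w\notin\mathcal C\}$; this is still $1$-norming. Indeed for $x\in S_E$, pick any true face containing $x$ in its relative interior (exists on a dense set by Theorem~\ref{th: structure} and Proposition~\ref{prop: gateaux-frechet-truefaces}); by a cardinality argument one shows $\sup_{f\in\B_0'} f(x)=1$ on that dense set, and hence everywhere by continuity, so $\cconv^{w^*}(\B_0')=B_{E^*}$. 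Then for $x_0\in B_E$, $\|x_0\|=1$ would give $\sup g(B_E)\ge\sup\{g(x_0'): x_0'\in\text{some }S_w^\pm, w\notin\mathcal C\}$ — contradiction with the separation.

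The cardinality step deserves care and I expect it to be the main obstacle. What one needs is: the set of $x\in S_E$ that lie in the relative interior of some kept true face $S_w^\pm$ ($w\notin\mathcal C$) is still dense in $S_E$ (equivalently, norming). A Baire-category / density-character argument works: the Fréchet (equivalently Gâteaux, by Proposition~\ref{prop: gateaux-frechet-truefaces}) differentiability points of $B_E$ form a dense $G_\delta$ in $S_E$ (polyhedral spaces are Asplund), each such point lies in the relative interior of exactly one true face $S_w^{\pm}$, and the interiors of distinct true faces are disjoint and relatively open in $S_E$. If the union of the removed interiors $\bigcup_{w\in\mathcal C}[\inte_{H_w^+}S_w^+\cup\inte_{H_w^-}S_w^-]$ covered a nonempty relatively open subset $U$ of the differentiability points, then $U$ would be covered by $\le\mathrm{card}(\mathcal C)<\Gamma$ disjoint relatively open sets; but $U$, being an open subset of $S_E$, has density character $\mathrm{dens}(E)=\Gamma$, and a separable-like packing argument (or: the set of kept differentiability points is nonempty and dense, hence norming) yields the contradiction. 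I would phrase it as: the kept interiors $\{\inte S_w^{\pm}\}_{w\notin\mathcal C}$ still cover a dense subset of $S_E$, because the complement of their union inside the differentiability points is closed in that $G_\delta$ and has empty interior (its interior would be a union of $<\Gamma$ of the disjoint relatively open $\inte S_w^{\pm}$, too few to be open in a space of density $\Gamma$). Density of the kept interiors then gives that $\B_0'=\{f_w^{\pm}:w\notin\mathcal C\}$ is norming, which is exactly what the separation argument above needs.

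For the "in particular" part: given $z\in B_E$ and $\theta>0$, by \eqref{eq: removefaces} there is a convex combination $\sum_{j=1}^n\lambda_j y_j$ with $y_j\in S_{w_j}^{\pm}$, $w_j\notin\mathcal C$, lying within $\theta/2$ of $z$. Each $y_j\in S_w^{\pm}$ can be perturbed by at most (say) $\theta/2$ into the relative interior $\inte_{H_{w_j}^{\pm}}(S_{w_j}^{\pm})$: the interior of a true face is dense in the face (it is a nonempty relatively open subset of the convex set $S_w^{\pm}$, hence dense in $S_w^{\pm}$ by convexity), so I may replace each $y_j$ by a nearby $g_j$ in the appropriate interior; the resulting convex combination is within $\theta$ of $z$. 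Collecting, set $A:=\{w_1,\dots,w_n\}$ and $G:=\{g_1,\dots,g_n\}$. Finally, to arrange that $G$ meets each $\inte_{H_w^+}(S_w^+)$ and $\inte_{H_w^-}(S_w^-)$ in at most a singleton (for $w\in A$): whenever two of the chosen interior points lie in the same face-interior $\inte S_w^{\pm}$, replace them by a single point, namely the corresponding sub-convex-combination, which still lies in that (convex, relatively open) interior; merging the coefficients keeps the overall convex combination unchanged and hence still within $\theta$ of $z$. This yields $z\in\conv(G)+\theta B_E$ with $G$ finite, $G\subset\bigcup_{w\in A}[\inte_{H_w^+}(S_w^+)\cup\inte_{H_w^-}(S_w^-)]$, and $G\cap\inte_{H_w^{\pm}}(S_w^{\pm})$ a singleton for each $w\in A$, completing the proof. $\qed$
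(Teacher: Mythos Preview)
Your separation start is right, and the ``in particular'' part is essentially correct (see the last paragraph). But the heart of your argument for \eqref{eq: removefaces}---that the kept face interiors are dense in $S_E$, equivalently that $\B_0'=\{f_w^\pm:w\notin\mathcal C\}$ is $1$-norming---is \emph{false}, not merely hard to justify.

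Take $E=c_0$ (which is $(\mathrm{I})$-polyhedral, hence satisfies $(**)$) and $\mathcal C=\{1,\dots,k\}$. The kept faces are $S_n^\pm=\{x\in B_{c_0}:x_n=\pm1\}$ for $n>k$. For $e_1\in S_{c_0}$ and any $y\in S_n^\pm$ with $n>k$ one has $\|e_1-y\|_\infty\geq |y_n-0|=1$, so $e_1$ is at distance $\geq 1$ from every kept face; and $\sup_{n>k}|e_n^*(e_1)|=0$, so $\B_0'$ is not even close to norming. Yet the conclusion of the lemma does hold here: $e_1=\tfrac12(e_1+e_n)+\tfrac12(e_1-e_n)\in\conv(S_n^+\cup S_n^-)$ for any $n>k$. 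The point is that the closed convex hull of the kept faces can equal $B_E$ without their \emph{union} being dense in $S_E$; your density/norming route cannot work. Your ``packing'' heuristic also fails: under $(**)$ a single removed face interior is already relatively open in $S_E$, so one removed face covers a nonempty open $U$---there is nothing for a cardinality-of-cover argument to bite on.

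The paper does something quite different after separating. From $\sup g(K)<1<\sup g(B_E)$ it forms the \emph{cap} $D_0=\{x\in B_E:g(x)\geq 1\}$, a polytope with nonempty interior, and proves
\[
D_0=\{x:g(x)\geq 1\}\cap\bigcap_{w\in\mathcal C}\{x:f_w^+(x)\leq 1,\ f_w^-(x)\leq 1\},
\]
together with the fact that every $d\in\partial D_0$ satisfies $g(d)=1$ or $f_w^\pm(d)=1$ for some $w\in\mathcal C$. (The inclusion $\supset$ is the nontrivial one: if some $x$ satisfied all the displayed inequalities but $x\notin B_E$, the segment from an interior point of $D_0$ to $x$ would cross $S_E$ at a point lying on a kept face and with $g$-value $>1$, contradicting $\sup g(K)<1$.) Corollary~\ref{cor: cardinalityboundary} then forces $2\,\mathrm{card}(\mathcal C)+1\geq\mathrm{dens}(E)=\Gamma$, the desired contradiction. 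This is where the cardinality of $\mathcal C$ actually enters.

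For the ``in particular'' clause your argument is fine up to one detail: merging gives that each $G\cap\inte_{H_w^\pm}(S_w^\pm)$ has \emph{at most} one element, but the statement asks for \emph{exactly} one in each of $\inte_{H_w^+}(S_w^+)$ and $\inte_{H_w^-}(S_w^-)$ for every $w\in A$. Just add, for each missing slot, an arbitrary point of that face interior; since $\conv(G)\subset\conv(G\cup\{\text{new points}\})$, the approximation $z\in\conv(G)+\theta B_E$ is preserved.
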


\begin{proof} Let us prove \eqref{eq: removefaces}.
Suppose on the contrary that  $$\D:=\cconv\left(\bigcup_{w\in [0,\Gamma)\setminus \mathcal{C}}[S_w^+\cup S_w^-]\right)\neq B_E,$$
	then, by the Hahn-Banach theorem, there exists $g\in X^*$ such that $$\sup g(D)<1<\sup g(B_E).$$ Let us consider the set $D_0=\{x\in B_E;\, g(x)\geq1\}$. Then $D_0$ is a closed convex set with nonempty interior. Moreover, $D_0$ is clearly a polyhedron since it is the intersection between $B_E$, which is a polyhedron, and a closed half-space. We claim that $$
	D_0=F:=\bigcap_{w\in\mathcal C} \{x\in E;\, f^+_w(x)\leq1,\, f^-_w(x)\leq1\}\cap \{x\in E;\, g(x)\geq1\},$$
	and that, for each $d\in\partial D_0$, at least one of the following condition holds:
	\begin{itemize}
		\item $g(d)=1$;
		\item there exists $w\in\mathcal C$ such that $f^+_w(d)=1$ or $f^-_w(d)=1$.
	\end{itemize}
	\begin{proof}[Proof of the claim] 
The containment $D_0\subset F$ is clear. For the other containment it is clearly sufficient to prove that $F\subset B_E$. Suppose on the contrary that there exists $x\in F\setminus B_E$. Take any $x_0\in\inte D_0$ and observe that $g(x_0)>1$ and $f^+_w(x_0)<1, f^-_w(x_0)<1$, whenever $w\in\mathcal C$. Let $y\in S_E$ be such that $S_E\cap[x_0,x]=\{y\}$. Then clearly \begin{enumerate}
	\item $f^+_w(y)<1, f^-_w(y)<1$, whenever $w\in\mathcal C$;
	\item $g(y)>1$.
\end{enumerate}		Now, since $y\in S_E$, there exists $w\in[0,\Gamma)$ such that $f^+_w(y)=1$ or $f^-_w(y)=1$, and necessarily by (i) we have that $w\not\in \mathcal{C}$. Hence,  $y\in D$ and, by  the fact that $\sup g(D)<1$, we get a contradiction by (ii). The proof of $D_0=F$ is concluded.

For the second part of the claim, suppose that $d\in\partial D_0$ and that $g(d)>1$, then clearly $d\in S_E$ and hence there exists $w\in[0,\Gamma)$ such that $f^+_w(d)=1$ or $f^-_w(d)=1$. By the definition of $D$ and since $\sup g(D)<1$, we have that $w\in\mathcal C$. The proof of the claim is concluded.  
	\end{proof}
	Now, by our claim and Corollary~\ref{cor: cardinalityboundary}, we have that $\mathrm{card}{(\mathcal C)}\geq \mathrm{dens}(E)=\Gamma$, a contradiction and \eqref{eq: removefaces} is proved. 
	
	For the latter part, since 
	\begin{eqnarray*}
		\textstyle z\in B_E &\subset& \textstyle \conv\left(\bigcup_{w\in [0,\Gamma)\setminus \mathcal{C}}[S_w^+\cup S_w^-]\right)+\frac\theta2 B_E\\
		&\subset& \textstyle 
		\conv\left(\bigcup_{w\in [0,\Gamma)\setminus \mathcal{C}}[\mathrm{int}_{H^+_w}(S^+_w)\cup\mathrm{int}_{H^-_w}(S^-_w)]\right)+\theta B_E, 
	\end{eqnarray*}
	there exists  $A$, a finite nonempty subset of $[0,\Gamma)\setminus\mathcal C$, and  $G$, a finite nonempty set of   
	$$ \bigcup_{w\in A}[\mathrm{int}_{H^+_w}(S^+_w)\cup\mathrm{int}_{H^-_w}(S^-_w)],$$
	such that  $z\in\conv(G)+\theta B_E$. Since, for each $w\in A$, the sets $\mathrm{int}_{H^+_w}(S^+_w)$ and $\mathrm{int}_{H^-_w}(S^-_w)$ are convex, we can suppose without any loss of generality that, for each $w\in A$, the sets $G\cap \mathrm{int}_{H^+_w}(S^+_w)$ and $G\cap \mathrm{int}_{H^-_w}(S^-_w)$ contains at most an element. If necessary, we can add finitely many elements to the set $G$ in such a way that the thesis holds. 
\end{proof}

\noindent We are grateful to L.~Vesely for the idea of the proof of the present form of Lemma~\ref{lemma: removefaces}, indeed the first version of the proof of this lemma required the additional hypothesis $\mathrm{dens}(E)=w^*\text{-}\mathrm{dens}(E^*)$.
\smallskip

\noindent In the sequel we shall also need the following fact, for the sake of completeness we include a sketch of a proof.

\begin{fact}\label{lemma:theta}
	Let $\epsilon$ and $\Gamma$ be as above. Then there exists a function $\theta:[0,\Gamma)\to(0,\epsilon)$ such that 
	\begin{equation}\label{eq:proprtheta}
		\inf_{\gamma\geq \eta}\theta(\gamma)=0,\qquad \text{whenever}\  \eta\in[0,\Gamma).
	\end{equation}
\end{fact}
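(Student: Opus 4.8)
The statement is purely combinatorial: it concerns only the ordinal $\Gamma$, which, being a density character of an infinite-dimensional space, is an infinite cardinal and in particular a limit ordinal. Unravelling requirement \eqref{eq:proprtheta}, what I must arrange is that for every $\delta\in(0,\epsilon)$ the sublevel set $\{\gamma\in[0,\Gamma):\theta(\gamma)<\delta\}$ is cofinal in $[0,\Gamma)$. The plan is therefore to exhibit, for each $n\in\N$, a cofinal subset of $[0,\Gamma)$ on which $\theta$ will take a value of order $\epsilon 2^{-n}$; since $\epsilon 2^{-n}\to0$, every sublevel set will then contain one of these cofinal pieces and \eqref{eq:proprtheta} will follow. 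How to produce the cofinal pieces depends on the cofinality of $\Gamma$, so I would split into two cases.

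First suppose $\mathrm{cf}(\Gamma)=\omega$; this already covers the separable situation $\Gamma=\omega$. Fix a strictly increasing sequence $(\gamma_n)_{n\in\N}$ with $\gamma_0=0$ and $\sup_n\gamma_n=\Gamma$. Every $\gamma\in[0,\Gamma)$ lies in exactly one interval $[\gamma_n,\gamma_{n+1})$, and I would set $\theta(\gamma):=\epsilon 2^{-n-1}$ for such $\gamma$, so that $\theta$ maps $[0,\Gamma)$ into $(0,\epsilon/2]\subset(0,\epsilon)$. Then for a fixed $\eta\in[0,\Gamma)$, with $\eta\in[\gamma_m,\gamma_{m+1})$ say, the set $\{\gamma:\gamma\ge\eta\}$ contains $[\gamma_{m+1},\Gamma)=\bigcup_{k>m}[\gamma_k,\gamma_{k+1})$, on which $\theta$ attains all the values $\epsilon 2^{-k-1}$, $k>m$; hence $\inf_{\gamma\ge\eta}\theta(\gamma)=0$.

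Now suppose $\mathrm{cf}(\Gamma)>\omega$. Here I would first note that the limit ordinals below $\Gamma$ are cofinal in $[0,\Gamma)$: for any $\beta<\Gamma$ the ordinal $\beta+\omega$ is a limit ordinal and $\beta+\omega<\Gamma$ (otherwise $\Gamma=\beta+k$ for some $1\le k\le\omega$, but $k<\omega$ would make $\Gamma$ a successor and $k=\omega$ would give $\mathrm{cf}(\Gamma)=\omega$, both excluded). Then I would use the unique representation $\alpha=\lambda_\alpha+n_\alpha$ of each ordinal $\alpha$, with $\lambda_\alpha$ equal to $0$ or a limit ordinal and $n_\alpha\in\N$, and put $\theta(\alpha):=\epsilon 2^{-n_\alpha-1}$, again landing in $(0,\epsilon/2]$. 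Given $\eta\in[0,\Gamma)$, choose a limit ordinal $\lambda$ with $\eta<\lambda<\Gamma$; the ordinals $\lambda+n$ ($n\in\N$) then satisfy $\lambda+n<\Gamma$ (as $\Gamma$ is a limit ordinal), $\lambda+n\ge\eta$, and $\theta(\lambda+n)=\epsilon 2^{-n-1}\to0$, so $\inf_{\gamma\ge\eta}\theta(\gamma)=0$.

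The only subtle point is the case $\mathrm{cf}(\Gamma)>\omega$, where a cofinal sequence is unavailable and one must instead scatter countably many cofinal sets throughout $[0,\Gamma)$, which is exactly what the limit-ordinal decomposition achieves; the remaining verifications are routine. If a single uniform argument is preferred, one may precompose with a fixed strictly increasing cofinal map $[0,\mathrm{cf}(\Gamma))\to[0,\Gamma)$ to reduce to the case in which $\Gamma$ is regular, which is then either $\omega$ or falls under the second case.
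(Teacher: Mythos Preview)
Your argument is correct. It is, however, a genuinely different construction from the paper's. The paper gives a single uniform definition: it fixes a bijection $p:[0,\Gamma)\to[\Gamma]^{<\infty}$ (using that $|[\Gamma]^{<\infty}|=\Gamma$ for infinite cardinals $\Gamma$) and sets $\theta(\gamma)=\epsilon/(2+\#p(\gamma))$; the point is that each level set $\{\gamma:\#p(\gamma)=n\}$ has cardinality $\Gamma$ and is therefore cofinal in $[0,\Gamma)$, since any initial segment $[0,\eta)$ has cardinality $|\eta|<\Gamma$. Your approach avoids this piece of cardinal arithmetic at the cost of a case split on $\mathrm{cf}(\Gamma)$, and in the uncountable-cofinality case exploits the Cantor normal form tail $\alpha=\lambda_\alpha+n_\alpha$ instead. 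Both routes produce the same conclusion; the paper's is shorter and uniform, while yours is slightly more elementary in that it uses only basic ordinal arithmetic rather than the equality $|[\Gamma]^{<\infty}|=\Gamma$.
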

\begin{proof}[Sketch of the proof]
	Let us denote by $[\Gamma]^n$  the set of all $n$-elements subsets of $\Gamma$, and by $[\Gamma]^{<\infty}$  the set of all finite subsets of $\Gamma$.
	Observe that, whenever $n\in\N$, we have
	\begin{equation}\label{eq: insiemifiniti}
		\mathrm{card}\left([\Gamma]^n\right)=\mathrm{card}\left([\Gamma]^{<\infty}\right)=\Gamma.
	\end{equation}
	In particular, there exists a one-to-one correspondence $p:[0,\Gamma) \to \Gamma^{<\infty}$.   If $A\in \Gamma^{<\infty}$, let us denote by $\# A$ the number of its elements. By \eqref{eq: insiemifiniti}, it is easy to see that the map  $\theta:[0,\Gamma)\to(0,\epsilon)$, defined by 
	$$\theta(\gamma)=\frac{\epsilon}{2+\#p(\gamma)},\qquad\qquad\gamma\in[0,\Gamma),$$
	satisfies \eqref{eq:proprtheta}.
\end{proof}

 \subsection{Construction of the sets $A_\gamma$ and $E_\gamma$ in the separable case}\label{subsection: separableconstruction}
Let us suppose that $\Gamma=\omega$.
 Let $\{x_n\}_{n\in[0,\omega)}\subset B_E$ be dense in $B_E$ and let $\theta:[0,\omega)\to(0,\epsilon)$ be as in Fact~\ref{lemma:theta}.
Put $E_0=A_0=\emptyset$ and $H_0=1$. We are going to define inductively, w.r.t. $n\in(0,\omega)$, nonempty sets $E_n\subset B_E$, $A_n\subset[0,\omega)$, and positive integers $H_n$  such that:

\begin{enumerate}
	\item[{($P_1^n$)}] $A_n\cap A_m=\emptyset$, whenever ${m<n}$;
	\item[{($P_2^n$)}] $A_n$ is finite;
		\item[{($P_3^n$)}]  if we denote 
	$$W^n=\{x_m\}_{m<n}\cup \bigr(\bigcup_{m<n} E_m\bigl),$$
	then $W^n$ is finite and  we can write $W^n=\{z^n_k\}_{k< H_n}$;
	\item[{($P_4^n$)}] 
	there exist $\{A^n_k\}_{k<H_n}$, a finite family  of finite pairwise disjoint subsets of $[0,\omega)$, and $\{G^n_k\}_{k< H_n}$,  a finite family of finite subsets of $B_E$,   such that  $E_n=\bigcup_{k< H_n} G_k^n$, $A_n=\bigcup_{k< H_n}A_k^n$, and such that, for each $k< H_n$, the following conditions are satisfied:
	\begin{enumerate}[(a)]
			\item $\textstyle A^n_k\subset  [0,\omega)\setminus B^n_k$, where $B^n_k=\bigl(\bigcup_{m<n}A_m\bigr)\cup\bigl(\bigcup_{h<k}A_h^n\bigr)$;
				\item $G_k^n$ is a  subset of $$ \bigcup_{w\in A^n_k}[\mathrm{int}_{H^+_w}(S^+_w)\cup\mathrm{int}_{H^-_w}(S^-_w)];$$
		\item  we have  $z^n_k\in[\conv(G^n_k)+\theta(n)B_E]$ and moreover, for each $w\in A^n_k$, the sets $G^n_k\cap \mathrm{int}_{H^+_w}(S^+_w)$ and $G^n_k\cap \mathrm{int}_{H^-_w}(S^-_w)$ are singletons. 
	\end{enumerate} 
\end{enumerate}

Let us show that this is possible. Let $n\in(0,\omega)$ and suppose that $E_m$, $A_m$, and $H_m$, satisfying properties {($P_1^m$)}-{($P_4^m$)}, are already defined,  whenever $m<n$. 
Observe that, for $m<n$, the sets $E_m$ and $A_m$ are finite and hence also the set $\bigcup_{m<n}E_m$ is finite. Therefore we can find $H_n$ and $\{z^n_k\}_{k< H_n}$ such that  ($P_3^n$) is satisfied. 
  Now, observe that, by Lemma~\ref{lemma: removefaces}, 
  there exist $A^n_0$, a finite nonempty subset of $[0,\omega)\setminus\bigcup_{m<n}A_m$, and  $G^n_0$, a finite nonempty set of 
$$ \bigcup_{w\in A^n_0}[\mathrm{int}_{H^+_w}(S^+_w)\cup\mathrm{int}_{H^-_w}(S^-_w)],$$
such that  $z^n_0\in[\conv(G^n_0)+\theta(n)B_E]$ and such that, for each $w\in A^n_0$, the sets $G^n_0\cap \mathrm{int}_{H^+_w}(S^+_w)$ and $G^n_0\cap \mathrm{int}_{H^-_w}(S^-_w)$ are singletons.

Applying Lemma~\ref{lemma: removefaces} finitely many times, we can easily define, for $k< H_n$,  finite  sets $G^n_k$ and $A^n_k$ such that (a), (b), and (c) in ($P_4^n$) are satisfied.
Then define $E_n=\bigcup_{k< H_n} G_k^n$, $A_n=\bigcup_{k< H_n}A_k^n$. Clearly ($P_2^n$) holds, properties ($P_1^n$) and ($P_4^n$) follow by our construction.  
\medskip

\subsection{Construction of the sets $A_\gamma$ and $E_\gamma$ in the non-separable case}\label{subsection: nonseparableconstruction}

Let us suppose that $\Gamma>\omega$. Let $\{x_\gamma\}_{\gamma\in[0,\Gamma)}\subset B_E$ be dense in $B_E$ and let $\theta:[0,\Gamma)\to(0,\epsilon)$ be as in Fact~\ref{lemma:theta}. Put $A_\omega=\emptyset$ and $E_\omega=\emptyset$. We are going to define by transfinite induction, w.r.t. $\gamma\in(\omega,\Gamma)$, nonempty sets $E_\gamma\subset B_E$ and $A_\gamma\subset[0,\Gamma)$ such that:

\begin{enumerate}
	\item[{($P_1^\gamma$)}] $A_\gamma\cap A_\eta=\emptyset$, whenever ${\eta \in[\omega,\gamma)}$;
	\item[{($P_2^\gamma$)}] $\mathrm{card}\bigl(A_\gamma\bigr)\leq\mathrm{card}(\gamma)$;	\item[{($P_3^\gamma$)}]  if we denote 
	$$W^\gamma=\{x_\eta\}_{\eta\in[0,\gamma)}\cup \bigr(\bigcup_{\eta\in[\omega,\gamma)} E_\eta\bigl),$$
	we  have that $\mathrm{card}(W^\gamma)= \mathrm{card}(\gamma)$, and hence we can write $W^\gamma=\{z^\gamma_\eta\}_{\eta\in[0,\gamma)}$;
	\item[{($P_4^\gamma$)}] 
	there exist $\{A^\gamma_\eta\}_{\eta\in[0,\gamma)}$, a family  of finite parwise disjoint subsets of $[0,\Gamma)$, and $\{G^\gamma_\eta\}_{\eta\in[0,\gamma)}$,  a family of finite subsets of $B_E$,   such that  $E_\gamma=\bigcup_{\eta\in[0,\gamma)} G_\eta^\gamma$, $A_\gamma=\bigcup_{\eta\in[0,\gamma)}A_\eta^\gamma$, and such that, for each $\eta\in[0,\gamma)$, the following conditions are satisfied:
	\begin{enumerate}[(a)]
		\item $\textstyle A^\gamma_\eta\subset  [0,\Gamma)\setminus B^\gamma_\eta$, where $B^\gamma_\eta=\bigl(\bigcup_{v\in[\omega,\gamma)}A_v\bigr)\cup\bigl(\bigcup_{v\in[0,\eta)}A_v^\gamma\bigr)$;
		\item $G_\eta^\gamma$ is a  subset of $$ \bigcup_{w\in A^\gamma_\eta}[\mathrm{int}_{H^+_w}(S^+_w)\cup\mathrm{int}_{H^-_w}(S^-_w)];$$
		\item  we have  $z^\gamma_\eta\in[\conv(G^\gamma_\eta)+\theta(\gamma)B_E]$ and moreover, for each $w\in A^\gamma_\eta$, the sets $G^\gamma_\eta\cap \mathrm{int}_{H^+_w}(S^+_w)$ and $G^\gamma_\eta\cap \mathrm{int}_{H^-_w}(S^-_w)$ are singletons. 
	\end{enumerate} 
\end{enumerate}

Let us show that this is possible. Let $\gamma\in(\omega,\Gamma)$ and suppose that $E_\delta$ and $A_\delta$, satisfying properties {($P_1^\delta$)}-{($P_4^\delta$)}, are already defined,  whenever $\delta\in[\omega,\gamma)$. 
Observe that, for $\delta\in(\omega,\gamma)$, we have that:
\begin{itemize}
	\item ($P_2^\delta$) implies  that $\mathrm{card}\bigl(A_\delta\bigr)\leq\mathrm{card}(\delta)\leq \mathrm{card}(\gamma)$;
	\item ($P_4^\delta$)  implies  that  
	$\mathrm{card}\bigl(E_\delta\bigr)=\mathrm{card}\bigl(A_\delta\bigr)$.	
\end{itemize}  
Hence, $\mathrm{card}\bigl(\bigcup_{\eta\in[\omega,\gamma)}E_\eta\bigr)\leq\mathrm{card}(\gamma)<\Gamma$ and hence ($P_3^\gamma$) is satisfied. 
Now, observe that, by Lemma~\ref{lemma: removefaces}, 
there exist $A^\gamma_0$, a finite nonempty subset of $[0,\Gamma)\setminus\bigcup_{\eta\in[\omega,\gamma)}A_\eta$, and  $G^\gamma_0$, a finite nonempty set of 
$$ \bigcup_{w\in A^\gamma_0}[\mathrm{int}_{H^+_w}(S^+_w)\cup\mathrm{int}_{H^-_w}(S^-_w)],$$
such that  $z^\gamma_0\in[\conv(G^\gamma_0)+\theta(\gamma)B_E]$ and such that, for each $w\in A^\gamma_0$, the sets $G^\gamma_0\cap \mathrm{int}_{H^+_w}(S^+_w)$ and $G^\gamma_0\cap \mathrm{int}_{H^-_w}(S^-_w)$ are singletons.

We can define by transfinite induction, w.r.t. $\eta\in[0,\gamma)$,  finite  sets $G^\gamma_\eta$ and $A^\gamma_\eta$ such that (a), (b), and (c) in ($P_4^\gamma$) are satisfied.
Indeed, let $\eta\in(0,\gamma)$ and suppose that $G^\gamma_\delta$ and $A^\gamma_\delta$ are already defined,  whenever $\delta\in[0,\eta)$. Since, for each $\delta<\eta$, the set  $A^\gamma_\delta$ is finite, we have that $\mathrm{card}(B^\gamma_\eta)\leq\mathrm{card}(\gamma)<\Gamma$. Hence, by Lemma~\ref{lemma: removefaces} there exist $A^\gamma_\eta$, a finite nonempty subset of $[0,\Gamma)\setminus B^\gamma_\eta$, and  $G^\gamma_\eta$, a finite nonempty subset of $B_E$ 
such that (a), (b) and (c) are satisfied. 
This conclude the construction of the sets $G^\gamma_\eta$ and $A^\gamma_\eta$, for $\eta\in[0,\gamma)$.

Then define $E_\gamma=\bigcup_{\eta\in[0,\gamma)} G_\eta^\gamma$, $A_\gamma=\bigcup_{v\in[0,\gamma)}A_v^\gamma$. Since $A_v^\gamma$ is finite, whenever $v\in[0,\gamma)$, we have that $\mathrm{card}(A_\gamma)\leq\mathrm{card}(\gamma)$ and hence  ($P_2^\gamma$) holds.  Properties ($P_1^\gamma$) and ($P_4^\gamma$) follow by our construction.  
\medskip

\subsection{Construction of the equivalent norm $|\cdot|$}

We are going to define an equivalent norm $|\cdot|$ on $E$, by means of the sets $A_\gamma$ and $E_\gamma$ defined in the previous subsections.

{\em  We will describe the details of the construction of $|\cdot|$ and its main properties (namely, conditions (A)-(E)  and Proposition~\ref{prop: eqnorm} below)  only in the non-separable case, the corresponding definition and results in the separable case are formally identical.
	In the remaining part of this section,
	we assume that $E$ is a non-separable polyhedral Banach space satisfying condition $(**)$.
}

 By Proposition~\ref{prop: gateaux-frechet-truefaces}, it is easy to see that property $(**)$ is equivalent to the following condition:
 \begin{equation}\label{eq: mu}
 	 \sup_{\xi\in[0,\Gamma)\setminus\{\gamma\}}|f^+_\xi(x)|<1,\qquad\text{whenever}\ \gamma\in\Gamma\  \text{and}\ x\in \mathrm{int}_{H^+_\gamma}(S^+_\gamma).
 \end{equation}
 
\smallskip

\smallskip 

Now, for  $\gamma\in(\omega,\Gamma)$ and $\eta<\gamma$, let the sets $W^\gamma$, $A^\gamma_\eta$ and $G^\gamma_\eta$ be defined as in Subsection~\ref{subsection: nonseparableconstruction}. For $w\in A^\gamma_\eta$, let us denote $\{e^{+}_w\}=G^\gamma_\eta\cap \mathrm{int}_{H^+_w}(S^+_w)$ and $\{e^{-}_w\}=G^\gamma_\eta\cap \mathrm{int}_{H^-_w}(S^-_w)$.  By our construction, there exist,   nonnegative numbers $\lambda^{+}_w, \lambda^{-}_w$ ($w\in A^\gamma_\eta$), satisfying $\sum_{w\in A^\gamma_\eta}(\lambda^{+}_w +\lambda^{-}_w)=1$ and such that 
$$z_\eta^\gamma\in\left[\sum_{w\in A^\gamma_\eta}(\lambda^{+}_w e^{+}_w+\lambda^{-}_w e^{-}_w)\right]+\theta(\gamma)B_E.$$
Let us denote $A=\bigcup_{\gamma\in({\omega},\Gamma)}\bigcup_{\eta\in[0,\gamma)}A^\gamma_\eta$ and suppose that, for each $\gamma\in [0,\Gamma)$:
\begin{enumerate}
	\item $e_\gamma:=e^{+}_\gamma$, $s_\gamma:=e^{-}_\gamma$, $\alpha_\gamma:=\lambda^{+}_\gamma$ and 
	$\beta_\gamma:=\lambda^{-}_\gamma$,
	 whenever $\gamma\in A$ and $\lambda^{+}_\gamma \geq\lambda^{-}_\gamma$;
	 	\item $e_\gamma:=-e^{-}_\gamma$, $s_\gamma:=-e^{+}_\gamma$, $\alpha_\gamma:=\lambda^{-}_\gamma$ and 
	 $\beta_\gamma:=\lambda^{+}_\gamma$,
	 whenever $\gamma\in A$ and $\lambda^{+}_\gamma <\lambda^{-}_\gamma$
\item $e_\gamma$ is equal to an arbitrarily taken element of $\mathrm{int}_{H^+_\gamma}(S^+_\gamma)$, whenever $\gamma\not\in A$.
\end{enumerate}
 Observe that by definition we have $\sum_{w\in A^\gamma_\eta}\beta_w\leq\frac12$, whenever $\gamma\in(\omega,\Gamma)$ and $\eta\in[0,\gamma)$. Finally, define 
$Z=\bigcup_{\gamma\in[0,\Gamma)}\{e_\gamma,-e_\gamma\}$,
 and observe that, by our construction, the following conditions are satisfied.
\begin{enumerate}[(A)]
	\item $Z$ is a symmetric  subset of $$\bigcup_{\gamma\in[0,\Gamma) }[\mathrm{int}_{H^+_\gamma}(S^+_\gamma)\cup\mathrm{int}_{H^-_\gamma}(S^-_\gamma)].$$
	\item For each $\gamma<\Gamma$, we have 
	$$Z\cap\mathrm{int}_{H^-_\gamma}(S^-_\gamma)=\{e_\gamma\}\ \ \ \text{and}\ \ \  Z\cap\mathrm{int}_{H^-_\gamma}(S^-_\gamma)=\{-e_\gamma\}.$$
	  \item Since $E$ satisfies property $(**)$, by \eqref{eq: mu},   we have that 
	  $$\textstyle \mu_\gamma:=\sup_{\xi\in[0,\Gamma)\setminus\{\gamma\}}|f^+_\xi(e_\gamma)|<1,\qquad \text{whenever}\  \gamma\in[0,\Gamma).$$
	\item For each $\gamma\in[0,\Gamma)$, we can take  a  number  $\theta_\gamma\in(0,\epsilon)$ 
such that $$\theta_\gamma+\mu_\gamma+\theta_\gamma\mu_\gamma<1.$$
\end{enumerate}
Let us define   
$$B=\cconv\bigl(\{\pm(1+\theta_\gamma) e_\gamma;\, \gamma\in[0,\Gamma)\}\bigr),$$
and observe that $B$ is a symmetric closed convex set contained in $(1+\epsilon)B_E$. Moreover, observe that 
$$B\supset\cconv\bigl(\{\pm e_\gamma;\, \gamma\in[0,\Gamma)\}\bigr).$$

\begin{proposition}\label{prop: eqnorm}
	We have $B_E\subset B$.
\end{proposition}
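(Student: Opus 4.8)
The plan is to establish the inclusion dually, via the Hahn--Banach separation theorem: since $B$ is closed and convex, $B_E\subset B$ is equivalent to the inequality $\|g\|\le\sup g(B)$ holding for every $g\in E^*$. So I would fix $g\in E^*$ and set $S:=\sup g(B)$. Because $B=\cconv\bigl(\{\pm(1+\theta_\gamma)e_\gamma:\gamma\in[0,\Gamma)\}\bigr)$, we have $S=\sup_\gamma(1+\theta_\gamma)|g(e_\gamma)|$, so $|g(e_\gamma)|\le S$ for all $\gamma$, and also $S\ge g(0)=0$. On the other hand, density of $\{x_\gamma\}_{\gamma\in[0,\Gamma)}$ in $B_E$ gives $\|g\|=\sup_\gamma g(x_\gamma)$. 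Hence the whole statement reduces to estimating $g(x_\delta)$ for a single dense point $x_\delta$, and we may assume $\|g\|\ge S$ (otherwise there is nothing to prove).

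The core of the argument is a single estimate of $g(x_\delta)$ read off from the construction. Fix $\delta\in[0,\Gamma)$ and let $\gamma$ be \emph{any} ordinal with $\gamma>\max\{\delta,\omega\}$. Then $(P_3^\gamma)$ gives $x_\delta\in W^\gamma=\{z^\gamma_\eta\}_{\eta<\gamma}$, say $x_\delta=z^\gamma_\eta$, and $(P_4^\gamma)$ (together with the barycentric coefficients fixed in the construction of $|\cdot|$ in Subsection~\ref{subsection: nonseparableconstruction}) provides a representation $x_\delta=\sum_{w\in A^\gamma_\eta}(\lambda^+_w e^+_w+\lambda^-_w e^-_w)+\theta(\gamma)u$, with $\lambda^\pm_w\ge0$, $\sum_w(\lambda^+_w+\lambda^-_w)=1$, and $\|u\|\le1$. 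For each $w\in A^\gamma_\eta$, recalling the definition of $e_w,s_w,\alpha_w,\beta_w$ (so $\alpha_w=\max\{\lambda^+_w,\lambda^-_w\}$, $\beta_w=\min\{\lambda^+_w,\lambda^-_w\}$), one has $\lambda^+_w e^+_w+\lambda^-_w e^-_w=\pm(\alpha_w e_w+\beta_w s_w)$ for a suitable sign; here $e_w$ is one of the points generating $B$, so $|g(e_w)|\le S/(1+\theta_w)\le S$, while $\|s_w\|=1$, so $|g(s_w)|\le\|g\|$. Consequently $\lambda^+_w g(e^+_w)+\lambda^-_w g(e^-_w)\le\alpha_w S+\beta_w\|g\|$ for each $w$. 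Summing over $w\in A^\gamma_\eta$, writing $b:=\sum_w\beta_w$ (so $b\le\tfrac12$ and $\sum_w\alpha_w=1-b$), and bounding the remainder by $\theta(\gamma)\|g\|$, I obtain $g(x_\delta)\le(1-b)S+(b+\theta(\gamma))\|g\|=S+b(\|g\|-S)+\theta(\gamma)\|g\|\le S+\tfrac12(\|g\|-S)+\theta(\gamma)\|g\|$, the last step using $0\le b\le\tfrac12$ and $\|g\|-S\ge0$.

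To finish, this estimate holds for \emph{every} $\gamma>\max\{\delta,\omega\}$, and by Fact~\ref{lemma:theta} the infimum of $\theta(\gamma)$ over this cofinal set of ordinals is $0$; letting $\theta(\gamma)\to0$ therefore gives $g(x_\delta)\le S+\tfrac12(\|g\|-S)$, valid for every $\delta$. Taking the supremum over $\delta$ collapses this to $\|g\|\le S+\tfrac12(\|g\|-S)$, i.e.\ $\|g\|\le S=\sup g(B)$, which is exactly what is needed; hence $B_E\subset B$. The separable case ($\Gamma=\omega$) is identical, replacing $(P_i^\gamma)$ by $(P_i^n)$ and the cofinal families of ordinals by cofinal subsets of $\omega$.

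I expect the only real obstacle to be the treatment of the vectors $s_w$: the members of $G^\gamma_\eta$ are \emph{not} all among the generators $\{e_\gamma\}$ of $B$ --- from each antipodal pair $\{e^+_w,e^-_w\}$ only the one carrying the larger weight is --- and the complementary weight $\sum_w\beta_w$ is merely bounded by $\tfrac12$, not small. The device that resolves this is precisely the self-referential supremum above: estimating $|g(s_w)|$ crudely by $\|g\|$ and then passing to $\sup_\delta$ converts the harmless constant $\tfrac12$ into a genuine contraction, while the leftover $\theta(\gamma)$-error is killed by cofinality (Fact~\ref{lemma:theta}). The two routine verifications underneath are that each $x_\delta$ really does reoccur as some $z^\gamma_\eta$ for all large $\gamma$ (immediate from $(P_3^\gamma)$, as $W^\gamma\supset\{x_\eta\}_{\eta<\gamma}$), and that the data $\lambda^\pm_w,e^\pm_w$ attached to $w\in A^\gamma_\eta$ coincide with those used to define $e_w,s_w,\alpha_w,\beta_w$ (immediate from the pairwise disjointness of the $A^\gamma_\eta$'s, i.e.\ from $(P_1^\gamma)$ and $(P_4^\gamma)$(a)).
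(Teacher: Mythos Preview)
Your argument is correct and takes a genuinely different route from the paper's. The paper works primally: it first isolates Lemma~\ref{lemma: trick}, which says that any $y\in\conv(W^\eta)$ decomposes as $y\in\alpha B+\beta\,\conv(W^\gamma)+\theta(\eta)B_E$ with $\beta\le\tfrac12$, and then \emph{iterates} this lemma $n$ times along a chain $\gamma_0<\gamma_1<\cdots<\gamma_n$ with $\theta(\gamma_i)$ small, so that the residual $\conv(W^{\gamma_n})$--term is contracted by $(1/2)^{n-1}$ and the accumulated error is $\le n/2^{n-1}$. Your approach dualizes the same one-step decomposition via a functional $g$, bounds the ``bad'' part $\sum_w\beta_w g(s_w)$ crudely by $b\|g\|$ using only $\|s_w\|=1$, kills the $\theta(\gamma)$--error by cofinality, and then replaces the entire iteration by the single self-referential inequality $\|g\|\le S+\tfrac12(\|g\|-S)$ obtained after taking $\sup_\delta$. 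This is cleaner: you avoid both the bookkeeping of the $n$-fold iteration and the verification (implicit in Lemma~\ref{lemma: trick}) that the points $s_w$ land back in $W^\gamma$; on the other hand, the paper's primal argument yields the slightly stronger pointwise statement $x_{\gamma_0}\in B+tB_E$ for every $t>0$, which your dual argument recovers only after Hahn--Banach.
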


\noindent For $\gamma\in[0,\Gamma)$, let the set $W^\gamma$ be defined as in the construction of the sets $A_\gamma$ and $E_\gamma$, contained in the Subsection~\ref{subsection: nonseparableconstruction}. In the proof of the proposition we shall need the following lemma.

\begin{lemma}\label{lemma: trick}
	Let $\eta,\gamma\in(\omega,\Gamma)$ be such that $\eta<\gamma$. If $y\in\conv(W^\eta)$ then there exist $\alpha,\beta\in[0,1]$ such that $\beta\leq\frac12$, $\alpha+\beta=1$ and
	$$y\in\alpha B+\beta\,\conv (W^\gamma)+\theta(\eta)B_E.$$
\end{lemma}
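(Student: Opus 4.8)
The plan is to unwind $W^\eta$ via properties $(P_3^\eta)$ and $(P_4^\eta)$ of Subsection~\ref{subsection: nonseparableconstruction}, write $y$ as a finite convex combination of the points $z^\eta_\xi$ ($\xi\in[0,\eta)$), and split each $z^\eta_\xi$ into a ``$B$-part'' and a ``$\conv(W^\gamma)$-part'' by means of the relabelling $(e_w,s_w,\alpha_w,\beta_w)_{w\in A}$ fixed in the construction of $|\cdot|$, keeping track of the total weights of the two parts.

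Two preliminary remarks do most of the work. First, since $\omega<\eta<\gamma$ we have $\eta\in[\omega,\gamma)$, hence $E_\eta=\bigcup_{\xi<\eta}G^\eta_\xi\subseteq\bigcup_{v\in[\omega,\gamma)}E_v\subseteq W^\gamma$; in particular, for every $\xi<\eta$ and every $w\in A^\eta_\xi$, the two points $e^+_w$ and $e^-_w$ (the unique elements of $G^\eta_\xi\cap\mathrm{int}_{H^+_w}(S^+_w)$ and of $G^\eta_\xi\cap\mathrm{int}_{H^-_w}(S^-_w)$) belong to $W^\gamma$. Second, straight from the definition of $e_w,s_w,\alpha_w,\beta_w$ (valid for $w\in A$, hence for $w\in A^\eta_\xi$) there is a sign $\sigma_w\in\{-1,1\}$ with
$$\lambda^+_we^+_w+\lambda^-_we^-_w=\sigma_w\bigl(\alpha_we_w+\beta_ws_w\bigr)=\alpha_w(\sigma_we_w)+\beta_w(\sigma_ws_w),$$
where $\alpha_w=\max\{\lambda^+_w,\lambda^-_w\}\ge\beta_w=\min\{\lambda^+_w,\lambda^-_w\}\ge0$; moreover $\sigma_we_w\in Z\subseteq B$ (as $Z$ is symmetric, $e_w\in Z$, and $B\supseteq\cconv(\{\pm e_\gamma\})$) and $\sigma_ws_w\in\{e^+_w,e^-_w\}\subseteq W^\gamma$ by the first remark. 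Checking this amounts to considering the cases $\lambda^+_w\ge\lambda^-_w$ and $\lambda^+_w<\lambda^-_w$ separately; this sign bookkeeping is the only point that needs genuine care.

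Now fix $\xi<\eta$. By the construction, $z^\eta_\xi\in\bigl[\sum_{w\in A^\eta_\xi}(\lambda^+_we^+_w+\lambda^-_we^-_w)\bigr]+\theta(\eta)B_E$ and $\sum_{w\in A^\eta_\xi}(\lambda^+_w+\lambda^-_w)=1$. Set $a_\xi=\sum_{w\in A^\eta_\xi}\alpha_w$ and $b_\xi=\sum_{w\in A^\eta_\xi}\beta_w$, so that $a_\xi+b_\xi=1$, $a_\xi\ge\tfrac12$ and $b_\xi\le\tfrac12$. Using the second remark and normalising the two partial sums, $\sum_w\alpha_w(\sigma_we_w)$ is a point of $a_\xi B$ (an $a_\xi$-multiple of a convex combination of points of $B$) and $\sum_w\beta_w(\sigma_ws_w)$ is a point of $b_\xi\conv(W^\gamma)$ (or $0$, if $b_\xi=0$), whence
$$z^\eta_\xi\in a_\xi B+b_\xi\conv(W^\gamma)+\theta(\eta)B_E.$$

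Finally, write $y=\sum_{\xi\in F}t_\xi z^\eta_\xi$ with $F\subseteq[0,\eta)$ finite, $t_\xi\ge0$, $\sum_{\xi}t_\xi=1$ (possible since $W^\eta=\{z^\eta_\xi\}_{\xi<\eta}$ by $(P_3^\eta)$). Since $B$, $\conv(W^\gamma)$ and $B_E$ are convex, summing the previous inclusions with weights $t_\xi$ gives $y\in\alpha B+\beta\conv(W^\gamma)+\theta(\eta)B_E$, where $\alpha=\sum_\xi t_\xi a_\xi$ and $\beta=\sum_\xi t_\xi b_\xi$ satisfy $\alpha+\beta=\sum_\xi t_\xi=1$, $\beta\le\tfrac12\sum_\xi t_\xi=\tfrac12$ and $\alpha,\beta\in[0,1]$; this is exactly the statement. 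The separable case ($\Gamma=\omega$) is identical, with $(P_i^n)$ in place of $(P_i^\gamma)$. As indicated, the only delicate point is the second preliminary remark; the rest is routine bookkeeping with Minkowski sums of convex sets.
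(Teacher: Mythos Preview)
Your proof is correct and follows essentially the same route as the paper's: reduce to the building blocks $z^\eta_\xi$ of $W^\eta$, split each via the relabelling $(e_w,s_w,\alpha_w,\beta_w)$ into a $B$-part and a $\conv(W^\gamma)$-part, and then take convex combinations. Your explicit handling of the sign $\sigma_w$ is in fact more careful than the paper, which writes the intermediate identity $\sum_w(\lambda^+_we^+_w+\lambda^-_we^-_w)=\sum_w\alpha_we_w+\sum_w\beta_ws_w$ without tracking this sign.
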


\begin{proof}
Since the sets $B$, $\conv (W^\gamma)$, and $B_E$ are convex, we can suppose, without any loss of generality, that $y\in W^\eta$.   
By ($P_4^{\eta}$), there exists $\overline\eta\in[0,\eta)$ and a finite set $A_{\overline\eta}^{\eta}\subset[0,\Gamma)$ such that 
$$y\in\left[\sum_{w\in A_{\overline\eta}^{\eta}}(\lambda^{+}_w e^{+}_w+\lambda^{-}_w e^{-}_w)\right]+\theta(\eta)B_E.$$
Then
$$y\in\left[\sum_{w\in A_{\overline\eta}^{\eta}}\alpha_w e_w+\sum_{w\in A_{\overline\eta}^{\eta}}\beta_w s_w\right]+\theta(\eta)B_E,$$
and hence, if we define $\alpha=\sum_{w\in A_{\overline\eta}^{\eta}}\alpha_w $ and $\beta=\sum_{w\in A_{\overline\eta}^{\eta}}\beta_w $, we have $$y\in\alpha B+\beta\,\conv (W^\gamma)+\theta(\eta)B_E.$$	
\end{proof}

\begin{proof}[Proof of Proposition~\ref{prop: eqnorm}]
It is clearly sufficient to prove that, for each $t>0$ and $\gamma_0\in[0,\Gamma)$, we have  	$x_{\gamma_0}\in B+tB_E$. Let $n\in\N$ be such that $\frac{n}{2^{n-1}}\leq t$ and take $\gamma_1,\ldots,\gamma_n\in(\omega,\Gamma)$ such that $\gamma_0<\gamma_1<\ldots<\gamma_n$ and such that $\theta(\gamma_i)\leq \frac{1}{2^{n-1}}$, whenever $i=1,\ldots,n$ (this is possible since \eqref{eq:proprtheta} holds). 
Since $x_{\gamma_0}\in\conv(W^{\gamma_1})$, by Lemma~\ref{lemma: trick}, there exist $\alpha_1,\beta_1\in[0,1]$ such that $\beta_1\leq\frac12$, $\alpha_1+\beta_1=1$, and
$$x_{\gamma_0}\in \alpha_1 B+\beta_1\conv(W^{\gamma_2})+\frac{1}{2^{n-1}}B_E.$$
Hence, there exist $b_1\in B$ and $w_1\in\conv(W^{\gamma_2})$ such that
$$x_{\gamma_0}\in \alpha_1 b_1+\beta_1w_1+\frac{1}{2^{n-1}}B_E.$$
Repeating $(n-1)$-times the same argument yields existence of elements $w_{n-1}\in\conv(W^{\gamma_n})$, $b_2,\ldots,b_{n-1}\in B$,  and numbers $\alpha_2,\beta_2,\ldots,\alpha_{n-1},\beta_{n-1}\in [0,1]$ satisfying $\beta_i\leq\frac12$ and $\alpha_i+\beta_i=1$, whenever $i=2,\ldots,(n-1)$, and such that, if we denote $$b:=\alpha_1b_1+\beta_1\alpha_2b_2+\ldots+\beta_{n-2}\ldots\beta_1\alpha_{n-1} b_{n-1},$$
then we have
$$\textstyle x_{\gamma_0}\in b+\beta_{n-1}\ldots\beta_1w_{n-1}+(1+\beta_1+\ldots+\beta_{n-2})\frac{1}{2^{n-1}}B_E.$$
In particular, since $b\in B$ (this is easy to see by the definition of $b$ and since  $0\in B$) and  since $W^{\gamma_n}\subset B_E$, we have
$$\textstyle x_{\gamma_0}\in B+\frac{1}{2^{n-1}}B_E+\frac{n-1}{2^{n-1}}B_E\subset B+\frac{n}{2^{n-1}}B_E\subset B+tB_E,$$
and the proof is concluded.
\end{proof}

	In particular, $B$ is the unit ball of an equivalent norm on $E$ and we have $$\textstyle B_E\subset B\subset (1+\epsilon)B_E.$$

From now on, we denote by $X$ the Banach space $E$ endowed by the equivalent norm $|\cdot|$ such that $B_X=B$.

\section{$X$ is a polyhedral Banach space satisfying $B_X=\cconv\bigl(\mathrm{ext}(B_X)\bigr)$}\label{sec: proofpoli}

Let $X$  be defined as in the previous section and let us denote by $\|\cdot\|$ the norm of $E$. The aim of this section is to prove that $X$ is a polyhedral Banach space. The main ingredient of our proof is  Proposition~\ref{prop: pyramidaldecomposition}, called by us {\em Pyramidal decomposition of $B_X$} and asserting that $B_X$ can be seen as the union of  $B_E$ and  $\Gamma$'s many sets (pyramids), each of them obtained as the convex hull of a true face of $B_E$ and a singleton (see Figure~\ref{FIG:pyramidal}). Moreover, at the end of the section, we show that $B_X$  is the closed
convex hull of its strongly exposed points (and hence of its extreme points). Let us start with the following lemma.

\begin{figure}
	\begin{center}
		\includegraphics[width=8cm]{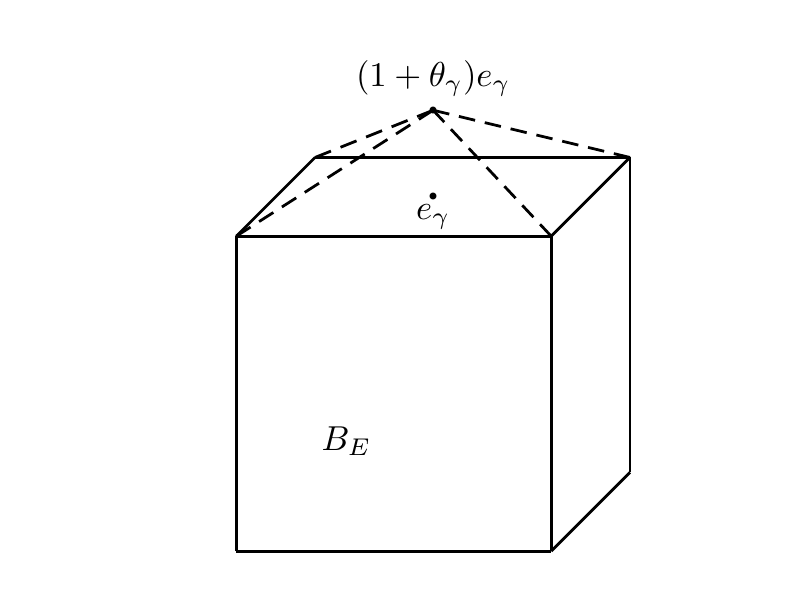}
	\end{center}
	\caption{The pyramid $\conv\bigl(B_E\cup\{\pm(1+\theta_\gamma) e_\gamma\})\bigr)$}\label{FIG:pyramidal}
\end{figure}

\begin{lemma}\label{lemma: solouno}
	We have 
	$$\textstyle 
	\conv\bigl(\{\pm(1+\theta_\gamma) e_\gamma;\, \gamma\in[0,\Gamma)\}\bigr)\subset\bigcup_{\gamma\in[0,\Gamma)}\conv\bigl(B_E\cup\{\pm(1+\theta_\gamma)e_\gamma\}\bigr).$$
\end{lemma}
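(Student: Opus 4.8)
The plan is to deduce the inclusion from Fact~\ref{fact: convex}, applied with $C=B_E$ and $D=\{\pm(1+\theta_\gamma)e_\gamma;\ \gamma\in[0,\Gamma)\}$: once we know that $[x,y]\cap B_E\neq\emptyset$ for every pair of distinct points $x,y\in D$, Fact~\ref{fact: convex} gives $\conv(D)\subset\bigcup_{x\in D}\conv(B_E\cup\{x\})$, and since for each $\gamma$ both $\conv(B_E\cup\{(1+\theta_\gamma)e_\gamma\})$ and $\conv(B_E\cup\{-(1+\theta_\gamma)e_\gamma\})$ sit inside $\conv\bigl(B_E\cup\{\pm(1+\theta_\gamma)e_\gamma\}\bigr)$, the statement of the lemma follows. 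So everything reduces to the ``segment claim''.

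To handle it I would fix distinct $x,y\in D$. The case $x=-y$ is trivial, since then $0\in(x,y)\subset B_E$; so assume $x\neq-y$. Write $x=\epsilon''(1+\theta_\gamma)e_\gamma$ and $y=\epsilon'(1+\theta_\delta)e_\delta$ with $\epsilon',\epsilon''\in\{-1,1\}$. If $\gamma=\delta$, then $x$ and $y$ both belong to $\{\pm(1+\theta_\gamma)e_\gamma\}$, so $x\neq y$ would force $x=-y$, contrary to assumption; hence $\gamma\neq\delta$. Since $B_E$ is symmetric we may replace $(x,y)$ by $(-x,-y)$ if needed, and so assume $x=(1+\theta_\gamma)e_\gamma$, $y=\epsilon'(1+\theta_\delta)e_\delta$ with $\gamma\neq\delta$. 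A point of $[x,y]$ then has the form $z(t)=ae_\gamma+\epsilon' b e_\delta$ with $a=(1-t)(1+\theta_\gamma)\geq0$, $b=t(1+\theta_\delta)\geq0$, $t\in[0,1]$, and the goal is to choose $t$ so that $z(t)\in B_E$.

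The next ingredient is a norm estimate for $z(t)$. Since $\B_0$ is a boundary for $E$ by Theorem~\ref{th: structure}, it is $1$-norming, and as $\B_0=\{\pm f^+_\xi;\ \xi\in[0,\Gamma)\}$ this gives $\|v\|=\sup_{\xi\in[0,\Gamma)}|f^+_\xi(v)|$ for all $v\in E$. Bounding $|f^+_\xi(z)|\leq a|f^+_\xi(e_\gamma)|+b|f^+_\xi(e_\delta)|$, using $f^+_\gamma(e_\gamma)=f^+_\delta(e_\delta)=1$ together with condition (C) (so $|f^+_\xi(e_\gamma)|\leq\mu_\gamma$ for $\xi\neq\gamma$ and $|f^+_\xi(e_\delta)|\leq\mu_\delta$ for $\xi\neq\delta$), and separating the cases $\xi=\gamma$, $\xi=\delta$ and $\xi\notin\{\gamma,\delta\}$ (the last being dominated by the first since $\mu_\gamma<1$), one obtains
$$\|z(t)\|\leq\max\{\,a+b\mu_\delta,\ a\mu_\gamma+b\,\}.$$

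The last — and only delicate — step is the choice of $t$. Both endpoints lie outside $B_E$ ($\|x\|=1+\theta_\gamma>1$, $\|y\|=1+\theta_\delta>1$) and in fact $a+b\geq\min\{1+\theta_\gamma,1+\theta_\delta\}>1$ along the whole segment, so no symmetric ``midpoint'' will work; the correct weight turns out to be $t=\theta_\gamma/(\theta_\gamma+\theta_\delta)\in(0,1)$. With this $t$ a short computation gives $a+b-1=2\theta_\gamma\theta_\delta/(\theta_\gamma+\theta_\delta)$, while rewriting condition (D) as $(1+\theta_\gamma)(1-\mu_\gamma)>2\theta_\gamma$ and $(1+\theta_\delta)(1-\mu_\delta)>2\theta_\delta$ yields
$$a(1-\mu_\gamma)=\tfrac{\theta_\delta}{\theta_\gamma+\theta_\delta}(1+\theta_\gamma)(1-\mu_\gamma)>\tfrac{2\theta_\gamma\theta_\delta}{\theta_\gamma+\theta_\delta}=a+b-1,$$
and symmetrically $b(1-\mu_\delta)>a+b-1$; hence $a\mu_\gamma+b=(a+b)-a(1-\mu_\gamma)<1$ and $a+b\mu_\delta=(a+b)-b(1-\mu_\delta)<1$, so $\|z(t)\|<1$ and $z(t)\in(x,y)\cap B_E$. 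This proves the segment claim, and with it the lemma. I expect this calibration to be the only real obstacle: one has to notice that condition (D) is designed precisely so that $(1+\theta_\bullet)(1-\mu_\bullet)>2\theta_\bullet$, and that the right balancing of $x$ against $y$ uses the weight $\theta_\gamma/(\theta_\gamma+\theta_\delta)$ rather than anything symmetric in $e_\gamma,e_\delta$; the remaining bookkeeping with the norming functionals is routine.
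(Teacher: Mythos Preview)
Your argument is correct. Both you and the paper reduce, via Fact~\ref{fact: convex} and symmetry, to the ``segment claim'' that $[x,y]\cap B_E\neq\emptyset$ for distinct $x,y\in D$, and both ultimately exploit condition~(D) in the equivalent form $(1+\theta_\gamma)(1-\mu_\gamma)>2\theta_\gamma$. The route to the segment claim, however, is genuinely different. The paper argues by contradiction: if $[w_1,w_2]\cap B_E=\emptyset$, Hahn--Banach gives $f\in B_{E^*}$ with $\inf f([w_1,w_2])>1$; approximating $f$ by a finite convex combination $\sum\lambda_i f_i$ of elements of $\B_0$ and evaluating at $w_1$ and $w_2$ separately, condition~(D) forces \emph{both} of the coefficients of $f^+_{\gamma_1}$ and $f^+_{\gamma_2}$ to exceed $\tfrac12$, which is impossible. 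Your argument is instead constructive: you exhibit an explicit point $z(t)\in[x,y]\cap B_E$ by choosing the calibrated weight $t=\theta_\gamma/(\theta_\gamma+\theta_\delta)$ and checking $\|z(t)\|<1$ directly against the boundary $\B_0$. Your approach is more elementary (no separation theorem, no approximation step) and yields a concrete witness; the paper's approach is slightly slicker in that it avoids having to guess the right~$t$, the ``$\lambda_i>\tfrac12$'' contradiction doing that work implicitly.
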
	
	
\begin{proof} By Fact~\ref{fact: convex} and symmetry, it
	is  sufficient to prove that, if $\gamma_1,\gamma_2,\in [0,\Gamma)$,  $\gamma_1\neq\gamma_2$,  and if we denote $w_i=(1+\theta_{\gamma_i})e_{\gamma_i}$ ($i=1,2$),  then   $[w_1,w_2]\cap B_E\neq\emptyset$. Suppose that this is not the case, then by the Hahn-Banach theorem there exists $f\in B_{E^*}$ such that $\inf f([w_1,w_2])>1$. Since $B_{E^*}=\cconv^{w^*}(\B_0)$, we can suppose without any loss of generality that $$f=\lambda_1f^+_{\gamma_1}+\lambda_2f^+_{\gamma_2}+\lambda_3 f_3\ldots+\lambda_nf_{n},$$
		where $n\in\N\setminus\{1,2\}$, $\lambda_1,\ldots,\lambda_n$ are non-negative numbers such that $\lambda_1+\ldots+\lambda_n=1$, and $f_3,\ldots,f_n\in\mathcal{B}_0\setminus\{f_{\gamma_1^+},f_{\gamma_2^+,}\}$. Since 
		$\mu_{\gamma_1}+\theta_{\gamma_1}+\mu_{\gamma_1}\theta_{\gamma_1}<1$, we have that 
		$\frac{1-\mu_{\gamma_1}-\mu_{\gamma_1}\theta_{\gamma_1}}{1+\theta_{\gamma_1}-\mu_{\gamma_1}-\mu_{\gamma_1}\theta_{\gamma_1}}>\frac12$. Since
		\begin{eqnarray*}
			\textstyle 1<f(w_1)&\leq&\lambda_1(1+\theta_{\gamma_1})+(\lambda_2+\ldots+\lambda_n)(1+\theta_{\gamma_1})\mu_{\gamma_1}\\
			&=&(1+\theta_{\gamma_1})\mu_{\gamma_1}+\lambda_1(1+\theta_{\gamma_1}-\mu_{\gamma_1}-\mu_{\gamma_1}\theta_{\gamma_1}),
		\end{eqnarray*}
		we have $\lambda_1>\frac{1-\mu_{\gamma_1}-\mu_{\gamma_1}\theta_{\gamma_1}}{1+\theta_{\gamma_1}-\mu_{\gamma_1}-\mu_{\gamma_1}\theta_{\gamma_1}}>\frac12$. A similar argument yields $\lambda_2>\frac12$, a contradiction and the proof is concluded.
		\end{proof}
  
\begin{proposition}[{Pyramidal decomposition of $B_X$}]\label{prop: pyramidaldecomposition}
We have 
\begin{eqnarray*}
  B_X&= &\bigcup_{\gamma\in[0,\Gamma)}\conv\bigl(B_E\cup\{\pm(1+\theta_\gamma) e_\gamma\})\bigr)\\
& =& B_E\cup \bigcup_{\gamma\in[0,\Gamma)}\conv\bigl(S^+_\gamma\cup\{(1+\theta_\gamma) e_\gamma\}\bigr)\cup \bigcup_{\gamma\in[0,\Gamma)}\conv\bigl(S^-_\gamma\cup\{-(1+\theta_\gamma) e_\gamma\}\bigr).	
\end{eqnarray*} 
\end{proposition}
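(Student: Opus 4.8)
The plan is to introduce the shorthand $p_\gamma:=(1+\theta_\gamma)e_\gamma$ ($\gamma\in[0,\Gamma)$), so that $B_X=\cconv\{\pm p_\gamma:\gamma\in[0,\Gamma)\}$ by construction, and to keep in mind that by (C)--(D) one has $\mu_\gamma<1$ and $(1+\theta_\gamma)\mu_\gamma=\mu_\gamma+\theta_\gamma\mu_\gamma<1-\theta_\gamma<1$. The heart of the argument is a \emph{single pyramid identity}: for each fixed $\gamma$,
$$\conv\bigl(B_E\cup\{p_\gamma,-p_\gamma\}\bigr)=B_E\cup\conv\bigl(S^+_\gamma\cup\{p_\gamma\}\bigr)\cup\conv\bigl(S^-_\gamma\cup\{-p_\gamma\}\bigr).$$
Granting this, the second displayed equality in the Proposition follows by taking the union over $\gamma$, and the identity also exhibits each ``pyramid'' $\conv(B_E\cup\{p_\gamma,-p_\gamma\})$ as a union of three closed sets (the $\conv(S^\pm_\gamma\cup\{\pm p_\gamma\})$ being closed since $S^\pm_\gamma$ is closed and bounded), hence closed; so everything reduces to showing that $U:=\bigcup_{\gamma}\conv(B_E\cup\{p_\gamma,-p_\gamma\})$ equals $B_X$.

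To prove the single pyramid identity I would fix $\gamma$, set $g:=f^+_\gamma$, and note that ``$\supseteq$'' is trivial. For ``$\subseteq$'': since $B_E$ is convex, symmetric and contains $0$, every point of the left side can be written $(1-\mu)b\pm\mu p_\gamma$ with $b\in B_E$, $\mu\in[0,1]$; by symmetry it is enough to treat $q=(1-\mu)b+\mu p_\gamma$, and we may assume $q\notin B_E$. For every $\xi\neq\gamma$, using $|f^+_\xi(b)|\le1$, $|f^+_\xi(e_\gamma)|\le\mu_\gamma$ and $(1+\theta_\gamma)\mu_\gamma<1$, one gets $|f^+_\xi(q)|\le(1-\mu)+\mu(1+\theta_\gamma)\mu_\gamma<1$, while clearly $g(q)\ge-(1-\mu)+\mu(1+\theta_\gamma)\ge-1$; since $\B_0=\{f^\pm_\xi\}$ is a boundary, $q\notin B_E$ means some member of $\B_0$ exceeds $1$ at $q$, and these estimates leave only $g$, so $g(q)>1$. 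Now $t\mapsto g((1-t)b+tp_\gamma)$ is affine and increases from $g(b)\le1$ to $g(p_\gamma)=1+\theta_\gamma$, so it takes the value $1$ at a unique $t_0\in[0,\mu)$; with $s:=(1-t_0)b+t_0p_\gamma$ we get $g(s)=1$, the same estimate gives $|f^+_\xi(s)|<1$ for all $\xi\neq\gamma$, hence $s\in B_E\cap g^{-1}(1)=S^+_\gamma$, and $q$ lies on $[s,p_\gamma]$, so $q\in\conv(S^+_\gamma\cup\{p_\gamma\})$. The ``$-$'' case is symmetric.

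For $U=B_X$: the inclusion $U\subseteq B_X$ is immediate, since $B_E\subset B_X$ by Proposition~\ref{prop: eqnorm}, $\pm p_\gamma\in B_X$, and $B_X$ is closed and convex. For the reverse inclusion, Lemma~\ref{lemma: solouno} gives $\conv\{\pm p_\gamma\}\subseteq U$, so $B_X=\cconv\{\pm p_\gamma\}\subseteq\overline U$; hence it is enough to check that $U$ is closed. I would argue: let $q\in\overline U$ and $q_n\in U$ with $q_n\to q$. If $q\in B_E$ then $q\in U$. Otherwise $\|q\|_E>1$, and since $\B_0$ is a boundary there is $g\in\B_0$, say $g\in\{f^+_\delta,f^-_\delta\}$, with $g(q)=\|q\|_E>1$. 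Using the description of a pyramid above, write $q_n=(1-\lambda_n)b_n+\lambda_n\delta_n$ with $b_n\in B_E$, $\delta_n\in\{p_{\gamma_n},-p_{\gamma_n}\}$, $\lambda_n\in[0,1]$. Whenever $\gamma_n\neq\delta$ we have $|g(e_{\gamma_n})|=|f^+_\delta(e_{\gamma_n})|\le\mu_{\gamma_n}$, so $g(q_n)\le(1-\lambda_n)+\lambda_n(1+\theta_{\gamma_n})\mu_{\gamma_n}<1$ by (D); since $g(q_n)\to g(q)>1$, this forces $\gamma_n=\delta$ for all large $n$, hence $q_n\in\conv(B_E\cup\{p_\delta,-p_\delta\})$ eventually, and as that set is closed, $q\in\conv(B_E\cup\{p_\delta,-p_\delta\})\subseteq U$. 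Therefore $U=B_X$, and combined with the single pyramid identity this yields both equalities in the Proposition.

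I expect the main obstacle to be exactly this closedness of $U$: it is in general an \emph{uncountable} union of closed sets, so one genuinely needs the uniform gap furnished by condition (D) --- equivalently, property $(**)$ --- to force the approximating indices $\gamma_n$ to stabilise to a single $\delta$ in the limit; the single pyramid identity, by contrast, is a mostly routine planar computation along the segment $[b,p_\gamma]$.
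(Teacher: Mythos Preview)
Your proof is correct and follows essentially the same route as the paper's. Both arguments hinge on Lemma~\ref{lemma: solouno} to place each approximant in a single pyramid, on the estimate $(1+\theta_\gamma)\mu_\gamma<1$ from condition~(D) to force the pyramid index to stabilise (your closedness argument for $U$ is exactly the paper's dichotomy ``constant subsequence vs.\ $x\in B_E$'' rephrased), and on the segment-intersection with $H^+_\gamma$ to obtain the second equality. The only organisational difference is that you establish the single-pyramid identity first and use it to deduce closedness of each $\conv(B_E\cup\{\pm p_\gamma\})$, whereas the paper proves the first displayed equality directly and records the closedness in passing via $\cconv(\ldots)=\conv(\ldots)$.
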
	

\begin{proof}
	Let us prove the first equality. The containment $\supset$ is trivial since $B_X=\cconv\bigl(\bigcup_{\gamma\in[0,\Gamma)}\{\pm(1+\theta_\gamma) e_\gamma\}\bigr)$ and since, by Proposition~\ref{prop: eqnorm}, $B_E\subset B_X$. For the other containment: let $x\in B_X$ and let $x_n\in \conv\bigl(\bigcup_{\gamma<\Gamma}\{\pm(1+\theta_\gamma) e_\gamma\}\bigr)$ ($n\in\N$) be such that $x_n\to x$ in the  norm-topology. By Lemma~\ref{lemma: solouno}, for each $n\in\N$, there exists $\gamma_n\in[0,\Gamma)$ such that $x_n\in\conv\bigl(B_E\cup\{\pm(1+\theta_{\gamma_n}) e_{\gamma_n}\}\bigr)$.
	Now, exactly one of the following conditions is satisfied.
	\begin{enumerate}
		\item The sequence $\{\gamma_n\}_n$ admits a constant subsequence.
		\item   For each $\gamma\in[0,\Gamma)$, we have that eventually (as $n\to\infty$) $f^+_\gamma(x_n)\leq 1$ and $f^-_\gamma(x_n)\leq 1$.
	\end{enumerate}
In the case (ii) is satisfied, we have that 
	 $f^+_\gamma(x)\leq 1$ and $f^-_\gamma(x)\leq 1$, whenever $\gamma\in[0,\Gamma)$. Since $\B_0$ is a boundary (and hence a 1-norming set) for $E$, we have that $x\in B_E$.
	In the case (i) is satisfied, there exists $\overline{\gamma}\in[0,\Gamma)$ such that
	$$\textstyle x\in\cconv\bigl(B_E\cup\{\pm(1+\theta_{\overline\gamma}) e_{\overline\gamma}\}\bigr)=\conv\bigl(B_E\cup\{\pm(1+\theta_{\overline\gamma}) e_{\overline\gamma}\}\bigr).$$
	 In both cases, we have $x\in \bigcup_{\gamma\in[0,\Gamma)}\conv\bigl(B_E\cup\{\pm(1+\theta_\gamma) e_\gamma\}\bigr)$.

	For the second equality, observe that
	the containment $\supset$ is trivial. For the other containment, it is sufficient to observe  that if $\gamma\in[0,\Gamma)$,  $x=(1+\theta_\gamma) e_\gamma$, and $b\in B_E$, then the segment $[b,x]$ intersects $H^+_\gamma$ in a unique point $x'$ (indeed, $f^+_\gamma(x)>1$ and $f^+_\gamma(b)\leq1$). Moreover, since $f^\pm_\xi(x)\leq1$ and $f^\pm_\xi(b)\leq1$, whenever $\xi\in[0,\Gamma)$ and $\xi\neq\gamma$, we have that $x'\in B_E\cap H^+_\gamma=S^+_\gamma$. The proof is concluded.     
\end{proof}

\begin{lemma}[{\cite[Lemma~2.7]{DEPOLY}}]\label{lemma:polytope+fin=polytope} Let $P$ be a polytope in a Banach space $Z$ and $W\subset Z$ a finite set. Then
	$K:=\conv(P\cup W)$ is a polytope.
\end{lemma}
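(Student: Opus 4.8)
The plan is to reduce the whole statement to finite dimensions by passing, for each prescribed finite-dimensional section, to a single finite-dimensional subspace that simultaneously contains that section and the finite set $W$. Write $W=\{w_1,\dots,w_m\}$ and $K:=\conv(P\cup W)$. First I would dispatch the elementary requirements of the definition of polytope: $K$ is convex by construction and bounded because $P$ is bounded and $W$ is finite. For closedness I would use that $\conv W$ is a (compact) finite-dimensional polytope, so that $K=\bigcup_{t\in[0,1]}\bigl(tP+(1-t)\conv W\bigr)$; given a convergent sequence $t_np_n+(1-t_n)b_n\to x$ with $p_n\in P$, $b_n\in\conv W$, one extracts subsequences along which $t_n\to t$ and $b_n\to b\in\conv W$, and then either $t=0$ (so $x=b\in\conv W\subset K$) or $t>0$, in which case $p_n$ converges and its limit lies in $P$ because $P$ is closed, whence $x\in K$.

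The main observation is a normal form for points of $K$: since $P$ is convex, every $x\in K$ can be written $x=\alpha p+\sum_{j=1}^m\mu_j w_j$ with $p\in P$, $\alpha,\mu_j\ge 0$, $\alpha+\sum_j\mu_j=1$. Now fix a finite-dimensional subspace $F\subset Z$ (the affine case is identical), and let $N$ be a finite-dimensional subspace of $Z$ with $F\cup W\subset N$. I claim $K\cap N=\conv\bigl((P\cap N)\cup W\bigr)$. The inclusion $\supseteq$ is immediate since $P\cap N$ and $W$ lie in $N$ and in $P\cup W$. For $\subseteq$, take $x\in K\cap N$ in normal form: if $\alpha=0$ then $x\in\conv W$; if $\alpha>0$ then $p=\alpha^{-1}\bigl(x-\sum_j\mu_j w_j\bigr)\in N$ because $x\in N$ and each $w_j\in N$, hence $p\in P\cap N$ and $x\in\conv\bigl((P\cap N)\cup W\bigr)$.

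To finish, since $P$ is a polytope, $P\cap N$ is a finite-dimensional polytope, so $P\cap N=\conv\{v_1,\dots,v_k\}$ for finitely many points $v_i$; consequently $K\cap N=\conv\bigl(\{v_1,\dots,v_k\}\cup W\bigr)$ is the convex hull of finitely many points, i.e.\ a finite-dimensional polytope. Then $K\cap F=(K\cap N)\cap F$ is the intersection of a finite-dimensional polytope with a subspace, hence again a finite-dimensional polytope; as $F$ was arbitrary, $K$ is a polytope. The only points requiring any care are the closedness of $K$ (where compactness of $\conv W$ is essential) and the bookkeeping for the identity $K\cap N=\conv\bigl((P\cap N)\cup W\bigr)$; the rest is a direct appeal to the definition of polytope together with standard finite-dimensional convex geometry, so I do not anticipate a real obstacle.
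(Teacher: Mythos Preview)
The paper does not supply its own proof of this lemma; it merely cites \cite[Lemma~2.7]{DEPOLY}. Your argument is correct and follows what is essentially the natural route: enlarge the given finite-dimensional section to a finite-dimensional subspace $N$ containing $W$, use the normal form $x=\alpha p+\sum_j\mu_j w_j$ to obtain $K\cap N=\conv\bigl((P\cap N)\cup W\bigr)$, and conclude from the fact that $P\cap N$ is the convex hull of finitely many points. The closedness argument via compactness of $\conv W$ is also fine. There is nothing to compare against in this paper, and I see no gap in your proof.
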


\begin{theorem}\label{th: polyrenorming} $X$ is a polyhedral Banach space.
\end{theorem}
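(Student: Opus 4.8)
The plan is to verify the defining property of a polytope for $B_X$: that every finite-dimensional section is a (finite-dimensional) polytope; by \cite{Klee59} it is in fact enough to treat $2$-dimensional sections. Fix a finite-dimensional subspace $V\subset X$. Since $E$ is polyhedral, $B_E$ is a polytope, so by Lemma~\ref{lemma:polytope+fin=polytope} each set $P_\gamma:=\conv\bigl(B_E\cup\{\pm(1+\theta_\gamma)e_\gamma\}\bigr)$ is a polytope, and hence each section $P_\gamma\cap V$ is a bounded polytope (it lies in $B_X\cap V$). By the Pyramidal decomposition (Proposition~\ref{prop: pyramidaldecomposition}), $B_X\cap V=\bigcup_{\gamma\in[0,\Gamma)}(P_\gamma\cap V)$. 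Thus it suffices to prove that only finitely many of these sections reach outside $B_E$, i.e.\ that $\Lambda_V:=\{\gamma:\ P_\gamma\cap V\not\subset B_E\}$ is finite: if so, then $B_X\cap V=(B_E\cap V)\cup\bigcup_{\gamma\in\Lambda_V}(P_\gamma\cap V)$ is a convex set equal to a finite union of bounded polytopes, hence equal to the convex hull of the finite set of all their vertices, hence a polytope.

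To attack the finiteness of $\Lambda_V$, fix $\gamma\in\Lambda_V$ and a point $v\in(P_\gamma\cap V)\setminus B_E$. By Proposition~\ref{prop: pyramidaldecomposition} and symmetry we may write $v=(1-t)b+t(1+\theta_\gamma)e_\gamma$ with $b\in S^+_\gamma$ and $t\in(0,1]$ (and $t>0$, since $v\notin B_E$). Using $f^+_\gamma(b)=f^+_\gamma(e_\gamma)=1$ we obtain $f^+_\gamma(v)=1+t\theta_\gamma>1$; using condition (C) ($|f^+_\xi(e_\gamma)|\le\mu_\gamma$ for $\xi\neq\gamma$) and condition (D), we obtain, for $\xi\neq\gamma$,
$$|f^+_\xi(v)|\ \le\ (1-t)+t(1+\theta_\gamma)\mu_\gamma\ <\ (1-t)+t(1-\theta_\gamma)\ =\ 1-t\theta_\gamma\ <\ 1 .$$
Since the minimal boundary $\B_0$ is $1$-norming, it follows that $\|v\|=f^+_\gamma(v)$, so the normalized witness $u:=v/\|v\|$ satisfies $f^+_\gamma(u)=1$ and $|f^+_\xi(u)|<1$ for every $\xi\neq\gamma$; in particular $u\in S^+_\gamma\cap V$.

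Now I would argue by contradiction. If $\Lambda_V$ were infinite, choose distinct $\gamma_n\in\Lambda_V$ together with witnesses $v_n\in V$ as above, so $|v_n|\le1$, $f^+_{\gamma_n}(v_n)>1$ and $|f^+_\xi(v_n)|<1$ for all $\xi\neq\gamma_n$. As $\{v\in V:\ |v|\le1\}$ is compact, pass to a subsequence with $v_n\to v_0\in V$. For each fixed $\xi$ we have $\xi\neq\gamma_n$ for all large $n$, hence $|f^+_\xi(v_0)|\le1$; since $\B_0$ is $1$-norming this gives $v_0\in B_E$, so $f^+_{\gamma_n}(v_0)\le1$ for all $n$. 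On the other hand $f^+_{\gamma_n}(v_0)\ge f^+_{\gamma_n}(v_n)-\|v_n-v_0\|>1-\|v_n-v_0\|$, so $\liminf_n f^+_{\gamma_n}(v_0)\ge1$ and therefore $f^+_{\gamma_n}(v_0)\to1$ and $\|v_0\|=1$, i.e.\ $v_0\in S_E$. Finally I would invoke property $(**)$ at $v_0$: since the $\gamma_n$ are distinct, eventually $f^+_{\gamma_n}\notin\D_E(v_0)$, whence $f^+_{\gamma_n}(v_0)\le\sup\{f(v_0):\ f\in\B_0\setminus\D_E(v_0)\}<1$ for all large $n$ — contradicting $f^+_{\gamma_n}(v_0)\to1$.

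The step I expect to be the main obstacle is exactly this last one: property $(**)$ is a statement about points of Fr\'echet differentiability of $B_E$, and one must know that $v_0\in\mathrm{FD}(S_E)$, whereas a limit of Fr\'echet points need not be one. This is painless when $E$ is moreover $(\mathrm{IV})$-polyhedral: then the true faces are exposed, so $\B_0\subset\mathrm{ext}\,B_{E^*}$, any $w^*$-cluster point $g$ of the pairwise distinct $\{f^+_{\gamma_n}\}$ lies in $(\mathrm{ext}\,B_{E^*})'$, and property $(\mathrm{IV})$ forces $g(v_0)<1$, which already contradicts $f^+_{\gamma_n}(v_0)\to1$ (one does not even need $v_0$ to be a smooth point). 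In the bare property-$(**)$ setting, the contradiction has to be squeezed out of the finer geometry of the witnesses — each $u_n=v_n/\|v_n\|$ lies inside the distinct true faces $S^+_{\gamma_n}$, and the sections $S^+_\gamma\cap V$ are faces of the fixed polygon $B_E\cap V$, of which there are only finitely many — together with Theorem~\ref{th: structure}; this local-finiteness bookkeeping is, I think, where the real work lies.
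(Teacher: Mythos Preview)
Your overall strategy matches the paper's, and your computations are correct; the only issue is that you take an unnecessary detour through a limiting argument, and then worry about a gap that you yourself have created. The point you are missing is that your estimate
\[
|f^+_\xi(v)|\ \le\ (1-t)+t(1+\theta_\gamma)\mu_\gamma\ <\ 1-t\theta_\gamma
\]
is \emph{uniform} in $\xi$. Dividing by $\|v\|=f^+_\gamma(v)=1+t\theta_\gamma$, you get
\[
\sup_{\xi\neq\gamma}|f^+_\xi(u)|\ \le\ \frac{1-t\theta_\gamma}{1+t\theta_\gamma}\ <\ 1,
\]
and a uniform gap below $1$ on all of $\B_0\setminus\{f^+_\gamma\}$ is exactly what says that $u$ lies in the \emph{interior} $\mathrm{int}_{H^+_\gamma}(S^+_\gamma)$, not merely in $S^+_\gamma$ (any $u'\in H^+_\gamma$ with $\|u'-u\|$ smaller than this gap still satisfies $|f^+_\xi(u')|<1$ for all $\xi\neq\gamma$, so $u'\in B_E\cap H^+_\gamma=S^+_\gamma$). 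Thus every $\gamma\in\Lambda_V$ satisfies $V\cap\mathrm{int}_{H^+_\gamma}(S^+_\gamma)\neq\emptyset$; and distinct such $\gamma$'s give distinct true faces of the finite-dimensional polytope $B_E\cap V$ (if $u\in V\cap\mathrm{int}_{H^+_\gamma}(S^+_\gamma)$ then $f^+_\gamma$ is the unique support functional at $u$, so $f^+_\gamma|_V$ is determined by the true face of $B_E\cap V$ through $u$, while $f^+_{\gamma'}(u)<1$ forces $f^+_{\gamma'}|_V\neq f^+_\gamma|_V$). Hence $\Lambda_V$ is finite, and you are done. This is precisely the paper's argument: it introduces $\Gamma_0=\{\gamma:\ V\cap\mathrm{int}_{H^+_\gamma}(S^+_\gamma)\neq\emptyset\}$, notes it is finite for the reason above, proves $\Lambda_V\subset\Gamma_0$ by the same normalization trick you used, and concludes via Lemma~\ref{lemma:polytope+fin=polytope}.

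So your final paragraph already contains the correct finish; the sequential/compactness argument and the invocation of property~$(**)$ at the limit point $v_0$ are superfluous. Note in particular that property~$(**)$ enters the proof \emph{only} through condition~(C) (i.e., only at the chosen points $e_\gamma$), never at an arbitrary sphere point, so the Fr\'echet-differentiability worry never arises.
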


\begin{proof}
	Let  $Y$ be a finite-dimensional subspace of $X$. 
Since $B_E\cap Y$ is a polytope in $Y$ and hence it has finitely many true faces, the set 
		$$\Gamma_0:=\{\gamma\in[0,\Gamma);\, Y\cap\mathrm{int}_{H_\gamma^+}(S_\gamma^+)\neq\emptyset\},$$
			is finite.
	We claim that 
	$$\textstyle B_X\cap Y= Y\cap \conv\bigl(B_E\cup\{\pm(1+\theta_\gamma)e_\gamma;\, \gamma\in\Gamma_0\}\bigr).$$
	Suppose on the contrary that this is not the case, by Proposition~\ref{prop: pyramidaldecomposition} there exists $\gamma\in [0,\Gamma)\setminus\Gamma_0$, $b\in S_\gamma^+$, and $\lambda\in(0,1]$ such that $$x:=\lambda(1+\theta_\gamma)e_\gamma+(1-\lambda)b\in Y.$$
	Observe that
	$$\|x\|\geq f_\gamma^+(x)=\lambda(1+\theta_\gamma)+(1-\lambda)=1+\lambda\theta_\gamma.$$
	Now, if $\xi\in[0,\Gamma)\setminus\{\gamma\}$, we have 
		$$\textstyle f_\xi^\pm(\frac{x}{\|x\|})\leq\frac{\lambda(1+\theta_\gamma)\mu_\gamma+(1-\lambda)}{1+\lambda\theta_\gamma}\leq\frac{\lambda(\mu_\gamma+\theta_\gamma)+(1-\lambda)}{1+\lambda\theta_\gamma}<1.$$
		So, we have  	$$\textstyle \frac{x}{\|x\|}\in Y,\quad f_\gamma^+(\frac{x}{\|x\|})\leq1,\quad f_\gamma^-(\frac{x}{\|x\|})<0,\quad \sup\bigl\{f_\xi^\pm(\frac{x}{\|x\|});\,\xi\in[0,\Gamma)\setminus\{\gamma\}\bigr\}<1,$$
	hence $\frac{x}{\|x\|}\in Y\cap\mathrm{int}_{H_\gamma^+}(S_\gamma^+)$, a contradiction since $\gamma\in [0,\Gamma)\setminus\Gamma_0$ and the claim is proved.	
	By Lemma~\ref{lemma:polytope+fin=polytope} and our claim, the set $B_X\cap Y$  is a polytope in $Y$; the proof is concluded by the arbitrariness of $Y$.
\end{proof}

\begin{theorem}\label{th: extremepoints}
	$B_X=\cconv(\ext B_X)=\cconv\bigl(\mathrm{str\, exp} (B_X)\bigr)$.
\end{theorem}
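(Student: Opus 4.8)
The plan is to exhibit a sufficiently large set of strongly exposed points of $B_X$ and show that its closed convex hull is all of $B_X$. The natural candidates are the apexes $\pm(1+\theta_\gamma)e_\gamma$ of the pyramids appearing in the Pyramidal decomposition (Proposition~\ref{prop: pyramidaldecomposition}), together with possibly some points coming from the true faces $S^\pm_\gamma$; but in fact the apexes alone should suffice. First I would check that, for each $\gamma\in[0,\Gamma)$, the point $(1+\theta_\gamma)e_\gamma$ is strongly exposed by $f^+_\gamma$. Indeed, $f^+_\gamma\bigl((1+\theta_\gamma)e_\gamma\bigr)=1+\theta_\gamma$, while by condition (C) and the decomposition one checks that $\sup f^+_\gamma(B_X)=1+\theta_\gamma$ and that every point of $B_X$ on which $f^+_\gamma$ is close to its supremum must lie close to $(1+\theta_\gamma)e_\gamma$; more directly, since $B_X=\conv\bigl(B_E\cup\{\pm(1+\theta_\xi)e_\xi\}\bigr)$ and $f^+_\gamma\le 1$ on $B_E$ and $f^+_\gamma\bigl(\pm(1+\theta_\xi)e_\xi\bigr)\le (1+\theta_\xi)\mu_\gamma<1$ for $\xi\neq\gamma$ by (C)–(D) (and $\le 1<1+\theta_\gamma$ for the minus apex at $\gamma$), Fact~\ref{fact: exp} applies with $C$ the closed convex hull of all of these other points and $x=(1+\theta_\gamma)e_\gamma$, giving that $(1+\theta_\gamma)e_\gamma\in\mathrm{str\,exp}(B_X)$, exposed by $f^+_\gamma$. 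By symmetry the same holds for $-(1+\theta_\gamma)e_\gamma$.

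Next I would show $B_X=\cconv\bigl(\{\pm(1+\theta_\gamma)e_\gamma;\ \gamma\in[0,\Gamma)\}\bigr)$. This is almost immediate: by construction $B=\cconv\bigl(\{\pm(1+\theta_\gamma)e_\gamma;\ \gamma\in[0,\Gamma)\}\bigr)$ is exactly how $B_X=B$ was defined in Subsection~\ref{subsection: nonseparableconstruction} (the construction of $|\cdot|$). Hence
$$
B_X=\cconv\bigl(\{\pm(1+\theta_\gamma)e_\gamma;\ \gamma\in[0,\Gamma)\}\bigr)\subset\cconv\bigl(\mathrm{str\,exp}(B_X)\bigr)\subset\cconv\bigl(\ext(B_X)\bigr)\subset B_X,
$$
where the first inclusion uses the previous paragraph, the second uses that strongly exposed points are extreme, and the last is trivial by convexity and closedness of $B_X$. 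Chasing equalities through this chain yields $B_X=\cconv(\ext B_X)=\cconv\bigl(\mathrm{str\,exp}(B_X)\bigr)$, which is the claim.

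The only real content is the first paragraph, i.e. verifying that the apexes are genuinely strongly exposed, and there the main obstacle is bookkeeping: one must be sure that $\sup f^+_\gamma\bigl(\cconv(C\cup\{\text{other apexes}\})\bigr)$ is strictly below $1+\theta_\gamma$, which is precisely what conditions (C) and (D) were arranged to guarantee ($\theta_\gamma+\mu_\gamma+\theta_\gamma\mu_\gamma<1$ gives $(1+\theta_\gamma)\mu_\gamma<1<1+\theta_\gamma$, and also $1<1+\theta_\gamma$ handles the opposite apex). Once that strict gap is in hand, Fact~\ref{fact: exp} does all the work, so no delicate estimate beyond (D) is needed. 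I would present the argument exactly in this order: (1) each apex is strongly exposed via Fact~\ref{fact: exp} and (C)–(D); (2) $B_X$ is the closed convex hull of the apexes by definition; (3) conclude by the inclusion chain above.
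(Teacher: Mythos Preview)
Your proposal is correct and follows essentially the same strategy as the paper: show that each apex $\pm(1+\theta_\gamma)e_\gamma$ is strongly exposed by $f^\pm_\gamma$ via Fact~\ref{fact: exp}, then use the defining identity $B_X=\cconv\bigl(\{\pm(1+\theta_\gamma)e_\gamma\}\bigr)$ to conclude. One small bookkeeping slip: the bound $|f^+_\gamma(e_\xi)|\le\mu_\gamma$ should read $|f^+_\gamma(e_\xi)|\le\mu_\xi$ (by the definition of $\mu_\xi$ in~(C)), so the relevant inequality is $(1+\theta_\xi)\mu_\xi<1$, which is exactly what~(D) gives with index~$\xi$; the conclusion is unaffected. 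The only methodological difference is that the paper first invokes the Pyramidal decomposition (Proposition~\ref{prop: pyramidaldecomposition}) to identify the slice $S(B_X,f^+_\gamma,\alpha)$ with the corresponding slice of the single pyramid $\conv\bigl(B_E\cup\{(1+\theta_\gamma)e_\gamma\}\bigr)$ and then applies Fact~\ref{fact: exp} there, whereas you apply Fact~\ref{fact: exp} directly to $B_X$ by bounding $f^+_\gamma$ on all the remaining generators; your route is slightly more economical since it bypasses Proposition~\ref{prop: pyramidaldecomposition} at this step.
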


\begin{proof}
	It is sufficient to prove that $$\{\pm(1+\theta_\gamma)e_\gamma;\, \gamma\in[0,\Gamma)\}\subset\mathrm{str\,exp} (B_X).$$
	Take $ \gamma\in[0,\Gamma)$ and let us prove that $(1+\theta_\gamma)e_\gamma$ is strongly exposed by $f_\gamma^+$. Let us start by observing that, by Proposition~\ref{prop: pyramidaldecomposition}, if $0<\alpha<\theta_\gamma$ then the slice $S(B_X,f_\gamma^+,\alpha)$ coincides with the slice $$S\bigl(\conv(B_E\cup\{(1+\theta_\gamma)e_\gamma\}),f_\gamma^+,\alpha\bigr).$$ 
	By Fact~\ref{fact: exp} and since $\sup f_\gamma^+(B_E)=1<f_\gamma^+\bigl((1+\theta_\gamma)e_\gamma\bigr)$, we have that $(1+\theta_\gamma)e_\gamma$ is a strongly exposed point of
	$\conv(B_E\cup\{(1+\theta_\gamma)e_\gamma\})$ and it is strongly exposed by $f_\gamma^+$. By our previous observation, $(1+\theta_\gamma)e_\gamma$ is a strongly exposed point of
	$B_X$.
\end{proof}

\section{Stronger polyhedrality properties: $X$ is $(\mathrm{V})$-polyhedral}\label{stronger}%

The aim of this section is to prove that, under the assumption that $E$ is  
$(\mathrm{IV})$-polyhedral, the Banach space $X$ is $(\mathrm{V})$-polyhedral. In the first part of the section we construct a boundary $\B$ of $X$, then in Theorem~\ref{th: Vpoly} we prove that  
if $E$ is  
$(\mathrm{IV})$-polyhedral then $\B$ satisfies property $(\mathrm{V})$.
\smallskip

Let $\gamma\in[0,\Gamma)$ and $\xi\in[0,\Gamma)\setminus\{\gamma\}$. Since $f^+_\gamma\bigl((1+\theta_\gamma)e_\gamma\bigr)=1+\theta_\gamma>1$ and  $$f^+_\xi\bigl((1+\theta_\gamma)e_\gamma\bigr)\leq (1+\theta_\gamma)\mu_\gamma\leq \theta_\gamma+\mu_\gamma<1,\quad f^-_\xi\bigl((1+\theta_\gamma)e_\gamma\bigr)<1$$ there exist (unique)  $\lambda^+_{\xi,\gamma},\lambda^-_{\xi,\gamma}\in(0,1)$ such that 
$$\textstyle \bigl[(1-\lambda^+_{\xi,\gamma})f^+_\xi+\lambda^+_{\xi,\gamma}f^+_\gamma\bigr]\bigl((1+\theta_\gamma)e_\gamma\bigr)=\bigl[(1-\lambda^-_{\xi,\gamma})f^-_\xi+\lambda^-_{\xi,\gamma}f^+_\gamma\bigr]\bigl((1+\theta_\gamma)e_\gamma\bigr)=1.$$
For $\gamma$ and $\xi$ as above, let us denote $$g^+_{\xi,\gamma}=(1-\lambda^+_{\xi,\gamma})f^+_\xi+\lambda^+_{\xi,\gamma}f^+_\gamma,\quad  g^-_{\xi,\gamma}=(1-\lambda^-_{\xi,\gamma})f^-_\xi+\lambda^-_{\xi,\gamma}f^+_\gamma,$$
then define
$$\mathcal{B}=\bigl\{\pm g^+_{\xi,\gamma}, \pm g^-_{\xi,\gamma};\, \gamma\in[0,\Gamma),\  \xi\in[0,\Gamma)\setminus\{\gamma\} \bigr\}.$$
Figure~\ref{FIG:pyramidalboundary2} describes the intuitive geometrical idea behind the definition of $\B$.

\begin{figure}
	\begin{center}
		\includegraphics[width=12cm]{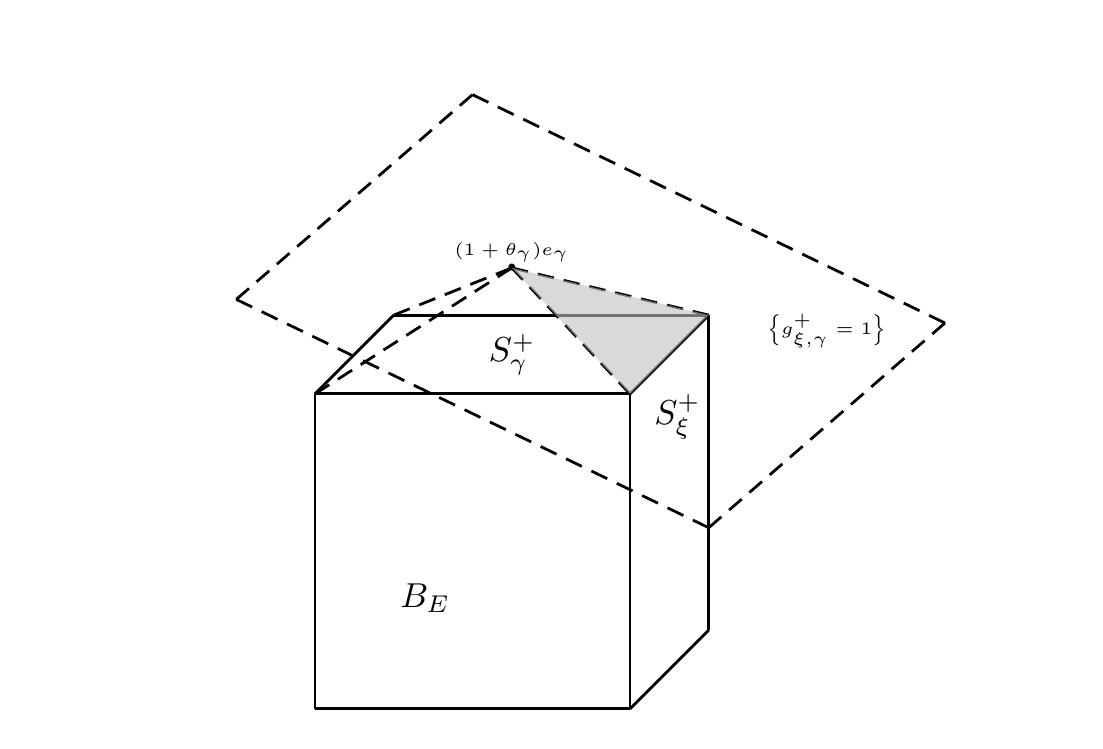}
	\end{center}
	\caption{The hyperplane given by $g^+_{\xi,\gamma}=1$ }\label{FIG:pyramidalboundary2}
\end{figure}

\begin{lemma}\label{lemma: inequalitylambda}
	Let $\gamma\in[0,\Gamma)$ and $\xi\in[0,\Gamma)\setminus\{\gamma\}$, then $\lambda^+_{\xi,\gamma}>\frac12$ and  $\lambda^-_{\xi,\gamma}>\frac12$.
\end{lemma}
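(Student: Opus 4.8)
The plan is to solve the two defining equations explicitly for $\lambda^+_{\xi,\gamma}$ and $\lambda^-_{\xi,\gamma}$, reduce each of the desired inequalities to a single scalar inequality, and then verify that scalar inequality using condition~(D) together with the bound coming from condition~(C).

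First I would fix $\gamma\in[0,\Gamma)$ and $\xi\in[0,\Gamma)\setminus\{\gamma\}$ and set $a:=f^+_\xi\bigl((1+\theta_\gamma)e_\gamma\bigr)$. Since $f^-_\xi=-f^+_\xi$, we also have $f^-_\xi\bigl((1+\theta_\gamma)e_\gamma\bigr)=-a$. By condition~(C) one has $|f^+_\xi(e_\gamma)|\le\mu_\gamma$, hence $|a|\le(1+\theta_\gamma)\mu_\gamma$; and by condition~(D), $(1+\theta_\gamma)\mu_\gamma=\mu_\gamma+\theta_\gamma\mu_\gamma<1-\theta_\gamma$. Consequently both $a<1-\theta_\gamma$ and $-a<1-\theta_\gamma$, and in particular $a<1+\theta_\gamma$ and $-a<1+\theta_\gamma$.

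Next I would expand the defining equation $\bigl[(1-\lambda^+_{\xi,\gamma})f^+_\xi+\lambda^+_{\xi,\gamma}f^+_\gamma\bigr]\bigl((1+\theta_\gamma)e_\gamma\bigr)=1$ as $(1-\lambda^+_{\xi,\gamma})a+\lambda^+_{\xi,\gamma}(1+\theta_\gamma)=1$; since $1+\theta_\gamma-a>0$, this gives
$$\lambda^+_{\xi,\gamma}=\frac{1-a}{1+\theta_\gamma-a}.$$
Then $\lambda^+_{\xi,\gamma}>\tfrac12$ is equivalent to $2(1-a)>1+\theta_\gamma-a$, i.e. to $a<1-\theta_\gamma$, which was verified above. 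The computation for $\lambda^-_{\xi,\gamma}$ is formally identical with $a$ replaced by $-a$: it yields $\lambda^-_{\xi,\gamma}=\tfrac{1+a}{1+\theta_\gamma+a}$, and $\lambda^-_{\xi,\gamma}>\tfrac12$ reduces to $-a<1-\theta_\gamma$, again already checked.

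There is no genuine obstacle here: the only point worth isolating is that inequality~(D), namely $\theta_\gamma+\mu_\gamma+\theta_\gamma\mu_\gamma<1$, was chosen precisely so that $(1+\theta_\gamma)\mu_\gamma<1-\theta_\gamma$; granting this, the lemma is just an elementary manipulation of the two convex-combination equations, and I would write it as a short direct computation.
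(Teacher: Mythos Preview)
Your proof is correct and follows essentially the same approach as the paper: both rely on condition~(D), rewritten as $(1+\theta_\gamma)\mu_\gamma<1-\theta_\gamma$, together with the bound from~(C). The only cosmetic difference is that you solve the defining convex-combination equation explicitly for $\lambda^\pm_{\xi,\gamma}$ and then reduce the inequality $\lambda^\pm_{\xi,\gamma}>\tfrac12$ to $ \pm a<1-\theta_\gamma$, whereas the paper bounds $f^\pm_\xi\bigl((1+\theta_\gamma)e_\gamma\bigr)$ by $(1+\theta_\gamma)\mu_\gamma$ first and manipulates the resulting inequality chain directly; the underlying computation is the same.
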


\begin{proof} By definition of $g^+_{\xi,\gamma}$ and since  $\theta_\gamma+\mu_\gamma+\mu_\gamma\theta_\gamma<1$, we have
		\begin{eqnarray*}
		\textstyle 1=g^+_{\xi,\gamma}\bigl((1+\theta_\gamma)e_\gamma\bigr)&\leq&(1-\lambda^+_{\xi,\gamma})(1+\theta_\gamma)\mu_\gamma+\lambda^+_{\xi,\gamma}(1+\theta_\gamma)\\
		&=& \mu_\gamma+\mu_\gamma\theta_\gamma+\lambda^+_{\xi,\gamma}(1+\theta_\gamma-\mu_\gamma-\mu_\gamma\theta_\gamma)\\
		&<& \mu_\gamma+\mu_\gamma\theta_\gamma+2\lambda^+_{\xi,\gamma}(1-\mu_\gamma-\mu_\gamma\theta_\gamma).
	\end{eqnarray*}
Hence, $\lambda^+_{\xi,\gamma}>\frac12$ and similarly we have  $\lambda^-_{\xi,\gamma}>\frac12$. 
\end{proof}

In the sequel, we shall need the following fact, the proof of which is left to the reader.

\begin{fact}\label{fact: nonemptyinterior}
	Let $H$ be a hyperplane in a Banach space $X$, $w\in X\setminus H$, and let $S\subset H$ be a convex set such that $\mathrm{int}_H S\neq\emptyset$. Then the set $\conv(S\cup\{w\})$ has nonempty interior in $X$. More precisely, we have
	$$\textstyle \inte[\conv(S\cup\{w\})]=\bigcup_{\lambda\in(0,1)} [\lambda\,\mathrm{int}_H (S)+(1-\lambda) w].$$	
\end{fact}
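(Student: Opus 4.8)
The plan is to introduce linear coordinates on $X$ adapted to the pair $(H,w)$ and then to read off the interior of $\conv(S\cup\{w\})$ by proving the two inclusions in the displayed identity.

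First I would normalize. Since a translation changes neither interiors nor the asserted identity, we may assume $0\in H$; as $H$ has codimension one and $w\notin H$, this gives $X=H\oplus\R w$, and we let $f\in X^*\setminus\{0\}$ be determined by $\ker f=H$ and $f(w)=1$. Every $x\in X$ then has the unique decomposition $x=h_x+f(x)w$ with $h_x:=x-f(x)w\in H$, and $x\mapsto h_x$ is a continuous (linear) map. Because $S$ is convex, $\conv(S\cup\{w\})=\{\lambda s+(1-\lambda)w:\ \lambda\in[0,1],\ s\in S\}$; applying $f$ shows $\conv(S\cup\{w\})\subseteq f^{-1}([0,1])$, that $w$ is its only point with $f$-value $1$, and that a point $x$ with $f(x)<1$ lies in $\conv(S\cup\{w\})$ if and only if $\tfrac{h_x}{1-f(x)}\in S$.

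For the inclusion $\supseteq$, fix $\lambda_0\in(0,1)$ and $s_0\in\mathrm{int}_H S$, choose $r>0$ with $\{h\in H:\ \|h-s_0\|<r\}\subseteq S$, and put $p:=\lambda_0 s_0+(1-\lambda_0)w$, so $f(p)=1-\lambda_0\in(0,1)$ and $\tfrac{h_p}{1-f(p)}=s_0$. The map $x\mapsto\tfrac{x-f(x)w}{1-f(x)}$ is continuous on $\{f<1\}$ and sends $p$ to $s_0$, so on a sufficiently small ball about $p$ we have $f(x)\in(0,1)$ and $\tfrac{h_x}{1-f(x)}\in S$, hence $x\in\conv(S\cup\{w\})$; thus $p$ is an interior point. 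For the inclusion $\subseteq$, take $x\in\inte[\conv(S\cup\{w\})]$. Perturbing $x$ by $\pm\delta w$ and using $\conv(S\cup\{w\})\subseteq f^{-1}([0,1])$ forces $f(x)\in(0,1)$; set $\lambda:=1-f(x)\in(0,1)$ and $s:=\tfrac{h_x}{\lambda}$, so that $s\in S$ by the above. Since $x$ is interior, for all small $h\in H$ the point $x+h$ lies in $\conv(S\cup\{w\})$, and $\tfrac{h_{x+h}}{1-f(x+h)}=s+\tfrac{h}{\lambda}$; letting $h$ run over a small $H$-ball shows a whole $H$-ball about $s$ lies in $S$, i.e. $s\in\mathrm{int}_H S$, and $x=\lambda s+(1-\lambda)w$. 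Finally the right-hand side is nonempty (any $\lambda\in(0,1)$ and $s\in\mathrm{int}_H S$ give a point of it), so $\conv(S\cup\{w\})$ has nonempty interior.

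I do not expect a genuine obstacle: the only steps needing attention are the normalization (a hyperplane arising in this paper is always an affine translate of $\ker f$ for a continuous $f$, so writing $H=\ker f$ with $f\in X^*$ is legitimate after translating) and keeping the two perturbation directions straight — moving along $w$ to pin down $f(x)\in(0,1)$, and moving inside $H$ to obtain the relative-interior conclusion. Both are routine once the coordinate picture is set up.
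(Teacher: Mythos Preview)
The paper does not supply a proof of this fact --- it is explicitly ``left to the reader'' --- so there is nothing to compare against. Your argument is correct: the splitting $X=H\oplus\R w$ via a continuous functional $f$ with $\ker f=H$, $f(w)=1$, together with the description of $\conv(S\cup\{w\})$ as $\{x: f(x)\in[0,1],\ \tfrac{h_x}{1-f(x)}\in S\ \text{if}\ f(x)<1\}$, reduces both inclusions to the continuity of $x\mapsto \tfrac{h_x}{1-f(x)}$ and elementary perturbations in the $w$-direction and the $H$-direction respectively.

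One cosmetic point: the biconditional ``a point $x$ with $f(x)<1$ lies in $\conv(S\cup\{w\})$ if and only if $\tfrac{h_x}{1-f(x)}\in S$'' is only literally correct under the additional constraint $f(x)\ge 0$ (otherwise the affine combination $(1-f(x))s+f(x)w$ is not convex). This does not affect your proof, since in the $\supseteq$ direction you explicitly enforce $f(x)\in(0,1)$ on the small ball, and in the $\subseteq$ direction you first establish $f(x)\in(0,1)$ via the $\pm\delta w$ perturbation before invoking the description. You might tighten the wording to ``a point $x$ with $f(x)\in[0,1)$'' for precision.
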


\begin{proposition}\label{prop: boundary}
	$\mathcal{B}$ is contained in $B_{X^*}$ and it is a boundary for $X$.
\end{proposition}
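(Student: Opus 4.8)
The plan is to verify the two assertions separately. For the inclusion $\mathcal{B}\subset B_{X^*}$, I would fix $\gamma$ and $\xi\neq\gamma$ and estimate $\sup g^+_{\xi,\gamma}(B_X)$ using the Pyramidal decomposition (Proposition~\ref{prop: pyramidaldecomposition}): since $B_X$ is the union of $B_E$ and the pyramids $\conv(S^\pm_\delta\cup\{\pm(1+\theta_\delta)e_\delta\})$, it suffices to bound $g^+_{\xi,\gamma}$ on each of these pieces. On $B_E$ the functional $(1-\lambda^+_{\xi,\gamma})f^+_\xi+\lambda^+_{\xi,\gamma}f^+_\gamma$ is a convex combination of elements of $\B_0\subset B_{E^*}$, hence bounded by $1$; on the pyramid over $S^\pm_\gamma$ the value at the apex $(1+\theta_\gamma)e_\gamma$ is exactly $1$ by the defining equation of $\lambda^+_{\xi,\gamma}$, and on the rest of the pyramids one checks the apex values $\pm(1+\theta_\delta)e_\delta$ stay $\leq 1$ using condition (C), i.e. $|f^+_\eta(e_\delta)|\leq\mu_\delta$ and $\theta_\delta+\mu_\delta+\theta_\delta\mu_\delta<1$. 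Taking the convex hull preserves these bounds, so $g^+_{\xi,\gamma}\in B_{X^*}$; the same works for $g^-_{\xi,\gamma}$ and for the negatives by symmetry of $B_X$.

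For the ``boundary'' assertion I would show that every $x\in S_X$ attains its norm at some member of $\mathcal{B}$. Using the Pyramidal decomposition, $x$ lies in $\partial B_X$, so either $x\in S_E$ or $x$ lies in the relative boundary of one of the pyramids $\conv(S^+_\gamma\cup\{(1+\theta_\gamma)e_\gamma\})$ (or its reflection). In the first case $x\in S_E$ and, by Theorem~\ref{th: structure}, $f^+_\delta(x)=1$ or $f^-_\delta(x)=1$ for some $\delta$; I would then pick any $\xi\neq\delta$ and argue that the corresponding $g^\pm_{\xi,\delta}$ — or rather, after possibly relabelling, one of the functionals built from $f^\pm_\delta$ — attains $1$ at $x$, since $g^+_{\xi,\delta}$ agrees with a convex combination that equals $1$ whenever both $f^+_\xi(x)\le 1$ and $f^+_\delta(x)=1$ with the combination weights chosen so the value is $1$; more carefully, one checks $\sup g^+_{\xi,\delta}(B_X)=1$ from the previous paragraph and $g^+_{\xi,\delta}(x)=1$ when $x$ lies on the facet $H^+_\delta\cap B_E$ and on the $\xi$-side, handling the other side with $g^-_{\xi,\delta}$ or with $-g^\pm$. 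In the second case, $x$ belongs to a face of the pyramid $\conv(S^+_\gamma\cup\{(1+\theta_\gamma)e_\gamma\})$; the ``lateral'' faces of this pyramid are precisely (pieces of) the hyperplanes $\{g^+_{\xi,\gamma}=1\}$ and $\{g^-_{\xi,\gamma}=1\}$ as $\xi$ ranges over $[0,\Gamma)\setminus\{\gamma\}$ (this is the content of Figure~\ref{FIG:pyramidalboundary2}), while the ``base'' face lies in $S_E$ and is covered by the previous case. So some $g^\pm_{\xi,\gamma}$ attains $1$ at $x$, which together with $\sup g^\pm_{\xi,\gamma}(B_X)=1$ gives what we want.

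Concretely, to pin down the lateral-face claim I would use Fact~\ref{fact: nonemptyinterior}: the pyramid $\conv(S^+_\gamma\cup\{(1+\theta_\gamma)e_\gamma\})$ has nonempty interior, and a point $x$ in its relative boundary but not in $B_E$ must satisfy $f^+_\gamma(x)\ge 1$; writing $x=\lambda(1+\theta_\gamma)e_\gamma+(1-\lambda)b$ with $b\in S^+_\gamma$ and $\lambda\in(0,1]$, the point $x$ fails to be interior to $B_X$ iff some functional in $\mathcal{B}_0$-plus-apex data is saturated, and the only candidates beyond $f^+_\gamma$ are the $f^\pm_\xi$ with $\xi\ne\gamma$; the corresponding supporting hyperplane of $B_X$ through $x$ is then exactly $\{g^\pm_{\xi,\gamma}=1\}$ by construction of $\lambda^\pm_{\xi,\gamma}$. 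The main obstacle I anticipate is the bookkeeping of signs and of the relabelling from $(f^+_\gamma,e^+_\gamma,\dots)$ to the normalized data $(e_\gamma,s_\gamma,\alpha_\gamma,\beta_\gamma)$ — making sure that in every case ($x\in S_E$ on a $+$ or $-$ facet; $x$ on a lateral face of a $+$ pyramid or a $-$ pyramid) we have exhibited an element of $\mathcal{B}$, and not merely of $\cconv^{w^*}\mathcal{B}$, that does the exposing. Everything else is a routine convexity estimate built on conditions (A)--(D) and the Pyramidal decomposition.
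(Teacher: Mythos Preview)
Your overall strategy matches the paper's, but there is a real gap in the boundary half, specifically in your ``first case'' $x\in S_E$. You write that one may ``pick any $\xi\neq\delta$'' and that $g^+_{\xi,\delta}(x)=1$ ``whenever both $f^+_\xi(x)\le 1$ and $f^+_\delta(x)=1$''. This is false: since $g^+_{\xi,\delta}=(1-\lambda)f^+_\xi+\lambda f^+_\delta$ with $\lambda\in(0,1)$ \emph{fixed} by the apex condition, one gets $g^+_{\xi,\delta}(x)=1$ only when $f^+_\xi(x)=1$ as well. So you must know that $x$ lies on the intersection of two true faces of $B_E$, i.e.\ that $x\in\partial_{H^+_\delta}S^+_\delta$ for the $\delta$ you picked. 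You never establish this; Theorem~\ref{th: structure} alone only places $x$ on \emph{some} true face, possibly in its relative interior, and for such $x$ no element of $\mathcal B$ attains~$1$. The missing observation is that $\mathrm{int}_{H^+_\gamma}S^+_\gamma\subset\mathrm{int}\,B_X$ for every $\gamma$ (the pyramid sits strictly above it), hence $S_X\cap S_E\subset\bigcup_\gamma\partial_{H^\pm_\gamma}S^\pm_\gamma$. The paper sidesteps the case split entirely: using Proposition~\ref{prop: pyramidaldecomposition} and Fact~\ref{fact: nonemptyinterior} it writes every $x\in S_X$ as $x=(1-\phi)b+\phi(1+\theta_\gamma)e_\gamma$ with $\phi\in[0,1]$ and $b\in\partial_{H^+_\gamma}S^+_\gamma$, so that $b\in S^+_\gamma\cap S^\pm_\xi$ for some $\xi\neq\gamma$, and then $g^\pm_{\xi,\gamma}$ takes the value $1$ at both $b$ and the apex, hence at $x$.

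For the inclusion $\mathcal B\subset B_{X^*}$ your route via the Pyramidal decomposition works, but is more laborious than necessary and your justification is incomplete in the case of the apex $\pm(1+\theta_\xi)e_\xi$ (i.e.\ $\delta=\xi$): here condition~(C) alone is not enough; one needs $\lambda^+_{\xi,\gamma}>\tfrac12$ (Lemma~\ref{lemma: inequalitylambda}, a consequence of~(D)). The paper takes the shorter route: since $B_X=\cconv\{\pm(1+\theta_\eta)e_\eta\}$, it suffices to check $g^\pm_{\xi,\gamma}$ at these generators, splitting into the three cases $\eta=\gamma$, $\eta=\xi$, and $\eta\notin\{\gamma,\xi\}$.
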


\begin{proof} 
	Let $\gamma,\eta\in[0,\Gamma)$, and $\xi\in[0,\Gamma)\setminus\{\gamma\}$, then the following assertions hold. 
	\begin{enumerate}
		\item If $\eta=\gamma$, we have
		$$\textstyle g^+_{\xi,\gamma}\bigl((1+\theta_\eta)e_\eta\bigr)= 1=g^-_{\xi,\gamma}\bigl((1+\theta_\eta)e_\eta\bigr);$$
		\item If $\eta=\xi$, we have
		\begin{eqnarray*}
			\textstyle g^+_{\xi,\gamma}\bigl((1+\theta_\eta)e_\eta\bigr)&\leq&(1-\lambda^+_{\xi,\gamma})(1+\theta_\xi)+\lambda^+_{\xi,\gamma}(1+\theta_\xi)\mu_\xi\\
			&=& 1+\theta_\xi+\lambda^+_{\xi,\gamma}(\mu_\xi+\theta_\xi\mu_\xi-1-\theta_\xi)\leq1,
		\end{eqnarray*}	where the last inequality holds since, by Lemma~\ref{lemma: inequalitylambda}, $\lambda^+_{\xi,\gamma}>\frac12$,  and since by definition   $\mu_\xi+\theta_\xi\mu_\xi-1-\theta_\xi< -2\theta_\xi$. Similarly, we have $g^-_{\xi,\gamma}\bigl((1+\theta_\eta)e_\eta\bigr)\leq1$.
	\item If $\eta\neq\xi$ and $\eta\neq\gamma$, we have
	$$\textstyle 
		 g^+_{\xi,\gamma}\bigl((1+\theta_\eta)e_\eta\bigr)\leq(1-\lambda^+_{\xi,\gamma})(1+\theta_\eta)\mu_\eta+\lambda^+_{\xi,\gamma}(1+\theta_\eta)\mu_\eta<1,$$
		 and similarly $
		 g^-_{\xi,\gamma}\bigl((1+\theta_\eta)e_\eta\bigr)<1$.
	 \end{enumerate}
	Since $B_X=\cconv\bigl(\{\pm(1+\theta_\gamma) e_\gamma;\, \gamma\in[0,\Gamma)\}\bigr)$, we have $\sup_{x\in B_X}\sup x(\mathcal B)\leq 1$ and hence $\mathcal B\subset B_{X^*}$. 
	
	Let us prove that $\mathcal B$ is a boundary.
	 By Proposition~\ref{prop: pyramidaldecomposition}
	and Fact~\ref{fact: nonemptyinterior},  it is not difficult to see that 
	$S_X$ is contained in the union of the following two sets $$ \bigcup_{\gamma\in[0,\Gamma)}\bigcup_{\lambda\in[0,1]}\bigl[\lambda\partial_{H^+_\gamma}S^+_\gamma+(1-\lambda)(1+\theta_\gamma) e_\gamma\bigr],$$
	$$ \bigcup_{\gamma\in[0,\Gamma)}\bigcup_{\lambda\in[0,1]}\bigl[\lambda\partial_{H^-_\gamma}S^-_\gamma-(1-\lambda)(1+\theta_\gamma) e_\gamma\bigr].$$
		
Hence, if $x\in S_X$, we can suppose without any loss of generality, that there exists $\phi\in[0,1]$,	 $\gamma\in[0,\Gamma)$, $\xi\in[0,\Gamma)\setminus\{\gamma\}$, and $b\in (S_\gamma^+\cap S_\xi^+)\cup (S_\gamma^+\cap S_\xi^-)$ such that $x=(1-\phi)b+\phi (1+\theta_\gamma)e_\gamma$. 
	Hence,  $g^+_{\xi,\gamma}(x)=1$ or $g^-_{\xi,\gamma}(x)=1$. By the arbitrariness of $x$, the proof is concluded.
\end{proof}

\begin{theorem}\label{th: Vpoly}
Suppose that $E$ is $(\mathrm{IV})$-polyhedral, then $\mathcal B$ satisfies property $(V)$, and hence $X$ is a $(\mathrm{V})$-polyhedral Banach space.	
\end{theorem}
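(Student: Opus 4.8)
The plan is to verify condition \eqref{eq:(v)} directly for the set $\mathcal B$, using the pyramidal decomposition (Proposition~\ref{prop: pyramidaldecomposition}) to locate each $x\in S_X$ inside a single pyramid and then Remark~\ref{remark:equiv(v)} to reduce matters to a statement about $w^*$-cluster points. First I would recall from the construction that $E$ being $(\mathrm{IV})$-polyhedral means, by Lemma~\ref{lemma:5poli1normanti}, that the minimal boundary $\mathcal B_0=\{f^\pm_\gamma\}$ satisfies property $(\mathrm{IV})$, i.e. $\sup\{f(x);\ f\in\mathcal B_0'\}<1$ for each $x\in S_E$; moreover property $(\mathrm{IV})$ implies property $(**)$, so the whole construction applies. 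The key structural fact I would extract is that a $w^*$-cluster point of a net in $\mathcal B=\{\pm g^+_{\xi,\gamma},\pm g^-_{\xi,\gamma}\}$ is, up to sign, either a cluster point of the $f^\pm_\xi$'s (hence lies in $\mathcal B_0\cup\mathcal B_0'$ and in particular in $(1)$-norm $\le 1$ with the $(\mathrm{IV})$-bound) or equals some fixed $f^+_\gamma$ (when the first index $\xi$ is eventually constant but $\lambda^\pm_{\xi,\gamma}\to$ something, using $\lambda^\pm_{\xi,\gamma}>\tfrac12$ from Lemma~\ref{lemma: inequalitylambda}); in every case one controls $g(x)$ for $x$ in the relevant pyramid.

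The main steps, in order, would be: (1) fix $x\in S_X$ and, via Proposition~\ref{prop: pyramidaldecomposition} and Fact~\ref{fact: nonemptyinterior}, write $x=(1-\phi)b+\phi(1+\theta_{\gamma_0})e_{\gamma_0}$ with $b\in S^+_{\gamma_0}$ (the $S^-$ case being symmetric), and determine $\mathcal D_X(x)$: by the slice computation in the proof of Theorem~\ref{th: extremepoints} together with the boundary computation in Proposition~\ref{prop: boundary}, the functionals in $\mathcal B$ attaining $1$ at $x$ are exactly those $g^\pm_{\xi,\gamma_0}$ with $\xi$ ranging over the true faces through $b$; (2) take any $g\in\mathcal B\setminus\mathcal D_X(x)$, say $g=g^+_{\xi,\gamma}$, and split into cases according to whether $\gamma=\gamma_0$ or $\gamma\ne\gamma_0$, and whether $\xi$ "sees" $b$; in each case bound $g^+_{\xi,\gamma}(x)$ strictly below $1$ by a bound that depends only on $\gamma_0,\gamma$ and on the $(\mathrm{IV})$-constant of $E$, using $\theta_\gamma+\mu_\gamma+\theta_\gamma\mu_\gamma<1$ and $\lambda^\pm_{\xi,\gamma}>\tfrac12$; (3) to get a \emph{uniform} strict bound as required by \eqref{eq:(v)}, pass to the cluster-point formulation of Remark~\ref{remark:equiv(v)}: show $\bigl(\mathcal B\setminus\mathcal D_X(x)\bigr)'\cap\mathcal D_X(x)=\emptyset$, by checking that any $w^*$-limit of functionals $g^{\pm}_{\xi_n,\gamma_n}$ from $\mathcal B\setminus\mathcal D_X(x)$ either has the form $\lambda f^+_\gamma+(1-\lambda)h$ with $h\in\mathcal B_0'$ and $\lambda<1$ strictly (so it cannot reach value $1$ at $x$ because $f^+_\gamma(x)\le 1+\theta_\gamma$ is not large enough and $h(x)<1$ by $(\mathrm{IV})$ applied to $E$), or is an honest element of $\mathcal B$ already excluded; then conclude $X$ is $(\mathrm{V})$-polyhedral by Proposition~\ref{prop: boundary} and Lemma~\ref{lemma:5poli1normanti}.

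The main obstacle I expect is step (3): correctly identifying all $w^*$-cluster points of $\mathcal B$ and showing none of them attain norm $1$ at a point where they would violate $(\mathrm{V})$. The subtlety is that $\mathcal B$ is a two-parameter family, so a net can have one index converging and the other not, or both converging; one must handle the "boundary" cluster points where $\xi_n\to\gamma_0$ (so $f^-_{\xi_n}$ or $f^+_{\xi_n}$ approaches a face through $e_{\gamma_0}$) with particular care, since there the coefficients $\lambda^\pm_{\xi_n,\gamma_n}$ also move. The saving grace is that $(\mathrm{IV})$-polyhedrality of $E$ gives a \emph{uniform} gap $\sup\{f(x);\ f\in\mathcal B_0'\}<1$, and the numbers $\mu_\gamma,\theta_\gamma$ were chosen precisely so that $\theta_\gamma+\mu_\gamma+\theta_\gamma\mu_\gamma<1$; combining these, every cluster point $g$ satisfies $g(x)<1$ for every $x\in S_X$, which is even stronger than $(\mathrm{V})$ and closes the argument.
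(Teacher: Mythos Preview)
Your overall strategy coincides with the paper's: locate $x\in S_X$ in a pyramid via Proposition~\ref{prop: pyramidaldecomposition}, pass to the cluster-point formulation of Remark~\ref{remark:equiv(v)}, and split according to the value of $\phi$. The identification of $\mathcal B\cap\D_X(x)$ in step~(1) and the uniform bounds coming from $\lambda^\pm_{\xi,\gamma}>\tfrac12$ in step~(2) are also what the paper does. The gap is entirely in your step~(3).

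First, your closing sentence---``every cluster point $g$ satisfies $g(x)<1$ for every $x\in S_X$''---cannot be true: that would mean $\mathcal B$ satisfies property~$(\mathrm{IV})$, hence by Lemma~\ref{lemma:5poli1normanti} $X$ would be $(\mathrm{IV})$-polyhedral, forcing $\ext(B_X)=\emptyset$ in contradiction with Theorem~\ref{th: extremepoints}. Concretely, for $x=(1+\theta_\gamma)e_\gamma$ every $g^\pm_{\xi,\gamma}$ equals $1$ at $x$, so $w^*$-cluster points of $\mathcal B$ can attain $1$ on $S_X$; only cluster points of $\mathcal B\setminus\D_X(x)$ are controlled.

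Second, your dichotomy for limits of $g^\pm_{\xi_\alpha,\gamma_\alpha}$ is incomplete, and the clause ``$h(x)<1$ by $(\mathrm{IV})$ applied to $E$'' is illegitimate when $\phi>0$, since then $x\notin S_E$ and property~$(\mathrm{IV})$ of $E$ says nothing at $x$. The paper handles the cases $\phi=0$ and $\phi\in(0,1)$ by a different mechanism that is absent from your outline. Writing $g_\alpha=\mu_\alpha^1 h_\alpha^1+\mu_\alpha^2 h_\alpha^2$ with $h_\alpha^i\in\mathcal B_0$, one first uses $(\mathrm{IV})$ of $E$ at the point $b\in S_E$ (not at $x$; in the case $\phi\in(0,1)$ this requires the reduction $g(x)=1\Rightarrow g(b)=1$) to force one coefficient to tend to $1$, say $\mu_\alpha^2\to 1$ with $h_\alpha^2\in\D_E(b)$. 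The crucial additional ingredient is property~$(\Delta)$---that $\mathcal B_0\cap\D_E(b)$ is \emph{finite}, a consequence of $(\mathrm{IV})$ noted after Definition~\ref{def: delta}---which lets one pass to a subnet on which $h_\alpha^2=f^+_{\overline\gamma}$ is constant. The contradiction then comes from the norm side, not from evaluating at $x$: since $f^+_{\overline\gamma}\bigl((1+\theta_{\overline\gamma})e_{\overline\gamma}\bigr)=1+\theta_{\overline\gamma}>1$ we have $f^+_{\overline\gamma}\notin B_{X^*}$, so eventually $\sup g_\alpha(B_X)>1$, contradicting $\mathcal B\subset B_{X^*}$. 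Neither property~$(\Delta)$ nor this ``$f^+_{\overline\gamma}$ escapes $B_{X^*}$'' argument appears in your plan, and without them the net analysis does not close.
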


\begin{proof}
	Let us prove that, for each $x\in S_X$, we have $$ \sup \{f(x);\ f\in\B\setminus \D_X(x)\}<1.$$ By Remark~\ref{remark:equiv(v)}, this is equivalent to prove that,   for each $x\in S_X$, we have $$\bigl(\B\setminus \D_X(x)\bigr)'\cap \D_X(x)=\emptyset.$$
Fix $x\in S_X$, and observe that, proceeding as in the final part of the proof of Proposition~\ref{prop: boundary},  we can suppose without any loss of generality, that there exists $\phi\in[0,1]$,	 $\gamma\in[0,\Gamma)$, and $\xi\in[0,\Gamma)\setminus\{\gamma\}$ such that $x=(1-\phi)b+\phi (1+\theta_\gamma)e_\gamma$, where $b\in (S_\gamma^+\cap S_\xi^+)\cup (S_\gamma^+\cap S_\xi^-)$.
We consider three different cases.
\smallskip

\noindent {\bf Case  $\phi=0$.} In this case $x\in \partial_{H_\gamma^+} S_\gamma^+\subset S_E$. Suppose on the contrary that 
there exists a net $\{g_\alpha\}_{\alpha\in I}\subset \B\setminus \D_X(x)$ such that $g_\alpha {\to}g\in D_X(x)$ in the $w^*$-topology. Observe that since each element of $\B$ is a convex combination of two elements of $\B_0$, there exist $\mu_\alpha^1,\mu_\alpha^2\in[0,1]$ and $h_\alpha^1,h_\alpha^2\in\B_0$ such that $g_\alpha=
\mu_\alpha^1h_\alpha^1+\mu_\alpha^2h_\alpha^2$, whenever $\alpha\in I$. Moreover, by $w^*$-compactness and since $\{g_\alpha\}_{\alpha\in I}\subset \B\setminus \D_X(x)$,  we can suppose without any loss of generality that:
\begin{itemize}
	\item $h_\alpha^1\not\in D_E(x)$, whenever $\alpha\in I$ (indeed, if both $h_\alpha^1(x)=1$ and $h_\alpha^2(x)=1$, then $g_\alpha(x)=1$ and hence $g_\alpha\in D_X(x)$);
	\item $h_\alpha^1 {\to} h^1\in B_{E^*}$ and $h_\alpha^2 {\to} h^2\in B_{E^*}$ in the $w^*$-topology;
	\item $\mu_\alpha^1 {\to} \mu^1\in [0,1]$, $\mu_\alpha^2 {\to} \mu^2\in [0,1]$, and $\mu^1+\mu^2=1$.
\end{itemize}
 We claim that $\mu^1=0$ (and hence that $\mu^2=1$). Indeed, suppose that this is not case and observe that, since $1=g(x)=
 \mu^1h^1(x)+\mu^2h^2(x)$, we should have $1= h^1(x)$, a contradiction by the fact that 
$h_\alpha^1\not\in D_E(x)$, whenever $\alpha\in I$, and by the fact that $E$ is ($IV$)-polyhedral.
The same argument shows that, by passing to a subnet if necessary, we can suppose that 
$h_\alpha^2\in D_E(x)$, whenever $\alpha\in I$. Since $E$  is ($IV$)-polyhedral,  it has property $(\Delta)$ (see Definition~\ref{def: delta}), and hence  $D_E(x)\cap\mathcal B_0$ is finite; so, by passing to a subnet if necessary,  we can suppose that there exists $\overline\gamma\in[0,\Gamma)$ such that 
$h_\alpha^2=f^+_{\overline\gamma}$, whenever $\alpha\in I$. Since $\sup f^+_{\overline\gamma}(B_X)\geq f^+_{\overline\gamma}\bigl((1+\theta_{\overline{\gamma}})e_{\overline{\gamma}}\bigr)$ and since 
$\mu_\alpha^2 {\to} \mu^2=1$, we have that eventually $\sup g_\alpha(B_X)>1$, a contradiction since 
$\{g_\alpha\}_{\alpha\in I}\subset \B\subset B_{X^*}$.
\smallskip

\noindent {\bf Case  $\phi\in(0,1)$.} 
Let us observe that in this case we have:
\begin{itemize}
	\item    $g^+_{\xi,\gamma}(x)=1$ if and only if $f_\xi^+(b)=1$;
\item    $g^-_{\xi,\gamma}(x)=1$ if and only if $f_\xi^-(b)=1$;
	\item if $\eta\neq \gamma$ and $\xi\in\Gamma\setminus\{\eta\}$, then 
	\begin{eqnarray*}
	\textstyle g^+_{\xi,\eta}[(1+\theta_\gamma)e_\gamma]&\leq&(1-\lambda^+_{\xi,\eta})(1+\theta_\gamma)+\lambda^+_{\xi,\eta}(1+\theta_\gamma)\mu_\gamma\\
	&=& 1+\theta_\gamma+\lambda^+_{\xi,\eta}(\mu_\gamma+\theta_\gamma\mu_\gamma-1-\theta_\gamma)\\
	&\leq&\textstyle \frac{1+\theta_\gamma+\mu_\gamma+\theta_\gamma\mu_\gamma}{2},
\end{eqnarray*}	where the last inequality holds since, by Lemma~\ref{lemma: inequalitylambda}, $\lambda^+_{\xi,\eta}>\frac12$. Since by definition   $\mu_\gamma+\theta_\gamma\mu_\gamma+\theta_\gamma<1$, we have $g^+_{\xi,\eta}(x)<1$. Similarly $g^-_{\xi,\eta}(x)<1$.
\end{itemize} By the previous observation, we have  
$$\textstyle \B\cap D_X(x)=\{ g^+_{\xi,\gamma};\,   \xi\in\Gamma\setminus\{\gamma\}, f_\xi^+(b)=1 \}\cup\{ g^-_{\xi,\gamma};\,  \xi\in\Gamma\setminus\{\gamma\}, f_\xi^-(b)=1 \}.$$
Hence, we have that $\B\setminus D_X(x)=\B^1\cup B^2$, where
$$\B^1=\{ g^+_{\xi,\gamma};\,   \xi\in\Gamma\setminus\{\gamma\}, f_\xi^+(b)<1 \}\cup\{ g^-_{\xi,\gamma};\,  \xi\in\Gamma\setminus\{\gamma\}, f_\xi^-(b)<1 \},$$
$$\B^2= \bigl\{g^+_{\xi,\eta};\,   \eta\in\Gamma\setminus\{\gamma\},\,\xi\in\Gamma\setminus\{\eta\} \bigr\}\cup \bigl\{g^-_{\xi,\eta};\,  \eta\in\Gamma\setminus\{\gamma\},\,\xi\in\Gamma\setminus\{\eta\} \bigr\}.$$
Proceeding as in the observation at the beginning of this case we get
\begin{equation}\label{eq:supboundary}
	\sup_{g\in\B^2}g[(1+\theta_\gamma)e_\gamma]\leq\textstyle\frac{1+\theta_\gamma+\mu_\gamma+\theta_\gamma\mu_\gamma}{2}<1.
\end{equation}

Now, suppose on the contrary that 
there exists a net $\{g_\alpha\}_{\alpha\in I}\subset \B\setminus \D_X(x)$ such that $g_\alpha {\to}g\in D_X(x)$ in the $w^*$-topology.
By \eqref{eq:supboundary} and by passing to a subnet if necessary, we can suppose that 
$\{g_\alpha\}_{\alpha\in I}\subset \B^1$. Since $g(x)=1$, we necessarily have that $g(b)=1$. Moreover, since $\{g_\alpha\}_{\alpha\in I}\subset \B^1$, we have that $g_\alpha(b)<1$, whenever $\alpha\in I$. Proceeding as in the previous case, we get a contradiction.  
\smallskip

\noindent {\bf Case  $\phi=1$.} 
 In this case $x=(1+\theta_\gamma)e_\gamma$, let us observe that we have:
 \begin{itemize}
 	\item if $\xi\in\Gamma\setminus\{\gamma\}$, then  by definition $g^+_{\xi,\gamma}(x)=1$ and $g^-_{\xi,\gamma}(x)=1$;
 	\item if $\eta\neq \gamma$ and $\xi\in\Gamma\setminus\{\eta\}$, then proceeding as in the previous case, we have: $$\textstyle g^+_{\xi,\eta}[(1+\theta_\gamma)e_\gamma]\leq \frac{1+\theta_\gamma+\mu_\gamma+\theta_\gamma\mu_\gamma}{2},\quad g^-_{\xi,\eta}[(1+\theta_\gamma)e_\gamma]\leq \frac{1+\theta_\gamma+\mu_\gamma+\theta_\gamma\mu_\gamma}{2}.$$ 
 \end{itemize} By the previous observation, we have   that $$\B\setminus D_X(x)= \bigl\{g^+_{\xi,\eta};\,   \eta\in\Gamma\setminus\{\gamma\},\,\xi\in\Gamma\setminus\{\eta\} \bigr\}\cup \bigl\{g^-_{\xi,\eta};\,  \eta\in\Gamma\setminus\{\gamma\},\,\xi\in\Gamma\setminus\{\eta\} \bigr\}.,$$
and hence, proceeding as in the previous cases, we have
$$\textstyle  \sup \{f(x);\ f\in\B\setminus \D_X(x)\}\leq \frac{1+\theta_\gamma+\mu_\gamma+\theta_\gamma\mu_\gamma}{2}<1.$$
The proof is concluded.
\end{proof}

\section{Conclusion, final remarks, and open problems}\label{sec:final}

 Let us resume the results obtained in the previous sections.

\begin{definition}
	Let $E$ be a Banach space, $\epsilon>0$. The space $E$ endowed with an equivalent norm, with unit ball $B$, is called {\em$\epsilon$-equivalent renorming}  of $E$, if 
	$$B_E\subset B\subset (1+\epsilon)B_E.$$
\end{definition} 
\smallskip

\begin{theorem}	Let $E$ be a  polyhedral Banach space satisfying property $(**)$ and  $\epsilon>0$. Then  the Banach space  $X$, defined as in Section~\ref{sec: mainconstruction}, is an $\epsilon$-equivalent polyhedral  renorming of $E$ satisfying \begin{equation}\label{eq: generating}
		B_X=\cconv\bigl(\mathrm{ext}(B_X)\bigr).
	\end{equation} Moreover, if $E$ is $(\mathrm{IV})$-polyhedral then $X$ is  $(\mathrm{V})$-polyhedral.
\end{theorem}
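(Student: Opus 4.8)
The plan is simply to assemble the statement from the results established in Sections~\ref{sec: mainconstruction}--\ref{stronger}, since each clause of the theorem has already been proved there. First I would record that $X$ is an $\epsilon$-equivalent renorming of $E$. By construction $B_X=B=\cconv\bigl(\{\pm(1+\theta_\gamma)e_\gamma;\, \gamma\in[0,\Gamma)\}\bigr)$, and each $e_\gamma$ lies in $\mathrm{int}_{H^+_\gamma}(S^+_\gamma)\subset S_E$ with $0<\theta_\gamma<\epsilon$, so $B_X\subset(1+\epsilon)B_E$; the reverse inclusion $B_E\subset B_X$ is Proposition~\ref{prop: eqnorm}. Hence $B_E\subset B_X\subset(1+\epsilon)B_E$, i.e.\ $X$ is an $\epsilon$-equivalent renorming in the sense of the definition just above the theorem.

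Next I would invoke Theorem~\ref{th: polyrenorming} for polyhedrality of $X$. Its proof runs through the Pyramidal Decomposition (Proposition~\ref{prop: pyramidaldecomposition}), which exhibits $B_X$ as $B_E$ together with the pyramids $\conv\bigl(S^\pm_\gamma\cup\{\pm(1+\theta_\gamma)e_\gamma\}\bigr)$; intersecting with a finite-dimensional subspace $Y$ leaves only the finitely many pyramids with $Y\cap\mathrm{int}_{H^+_\gamma}(S^+_\gamma)\neq\emptyset$ active, so $B_X\cap Y$ is the convex hull of the polytope $B_E\cap Y$ with finitely many extra points, hence a polytope by Lemma~\ref{lemma:polytope+fin=polytope}.

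For \eqref{eq: generating} I would cite Theorem~\ref{th: extremepoints}: applying Fact~\ref{fact: exp} inside each pyramid shows every vertex $\pm(1+\theta_\gamma)e_\gamma$ is strongly exposed (by $\pm f^+_\gamma$), and since these points already generate $B_X$ by the very definition of $B$, we get $B_X=\cconv\bigl(\mathrm{str\,exp}(B_X)\bigr)=\cconv(\ext B_X)$, in particular \eqref{eq: generating}. Finally, the last sentence is exactly Theorem~\ref{th: Vpoly}: when $E$ is $(\mathrm{IV})$-polyhedral, the boundary $\mathcal B$ built from the $g^\pm_{\xi,\gamma}$ (Proposition~\ref{prop: boundary}) is shown to satisfy property $(\mathrm V)$ via the case analysis on the position of $x\in S_X$ along a pyramid, so by Lemma~\ref{lemma:5poli1normanti} the space $X$ is $(\mathrm V)$-polyhedral.

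Since every ingredient is already in place, the only real content of this final theorem is bookkeeping — matching each clause to the earlier result that supplies it — and there is no genuine obstacle at this stage. If I had to single out the conceptually delicate steps of the whole development they would be Proposition~\ref{prop: eqnorm} (the telescoping use of Lemma~\ref{lemma: trick} to push $x_{\gamma_0}$ into $B+tB_E$ while keeping the geometric error summable) and Case $\phi=0$ of Theorem~\ref{th: Vpoly} (the $w^*$-net argument where one separates the two components of $g_\alpha$ and exploits property $(\Delta)$ of $E$); both, however, are already carried out in the sections above, so here I would merely quote them.
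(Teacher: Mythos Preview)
Your proposal is correct and matches the paper's treatment exactly: this theorem is stated in Section~\ref{sec:final} purely as a summary of what has already been proved, with no separate proof given, and you have correctly identified each clause with the corresponding earlier result (Proposition~\ref{prop: eqnorm} and the construction for the $\epsilon$-equivalence, Theorem~\ref{th: polyrenorming} for polyhedrality, Theorem~\ref{th: extremepoints} for \eqref{eq: generating}, and Theorem~\ref{th: Vpoly} for the final sentence). Your additional remarks on where the real work lies are accurate and appropriate.
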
  

Whether the hypothesis concerning property ($**$) can be omitted in the first part of the previous result remains an open question.

\begin{problem}\label{pb: without**}
Suppose that $E$ is a  polyhedral Banach space. Does $E$ admits an equivalent polyhedral renorming $X$ satisfying $B_X=\cconv\bigl(\mathrm{ext}(B_X)\bigr)$? 
\end{problem}

Let us  point out that the problem above has an affirmative answer in the case $E$ is separable. Moreover, in this case, it is also possible to prove that the equivalent norms satisfying \eqref{eq: generating} are dense w.r.t. the Banach-Mazur distance (see Subsection~\ref{subsection: approximation} below for the details).
Problem~\ref{pb: without**} has an affirmative answer also in the case $E$ is a Lindenstrauss space, indeed in this case property $(**)$ is automatically satisfied (see Subsection~\ref{subsection: lindenstrauss}).

\subsection{Approximation by polyhedral norms such that the  corresponding closed unit ball  is the closed	convex hull of its extreme points}\label{subsection: approximation}

\noindent The following result holds (see {\cite[Theorem~3.3]{InfPoly}} and \cite[Theorem~1.1]{devillefonfhajek}).

\begin{theorem}\label{th: +rinormable}
	For a separable Banach space $E$ the following statements are equivalent.
	\begin{enumerate}
		\item For each $\epsilon>0$, $E$ admits an $\epsilon$-equivalent  $(\mathrm{IV})$-polyhedral renorming.
		\item $E$ is isomorphically polyhedral.
	\end{enumerate}
\end{theorem}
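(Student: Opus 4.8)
The plan is to prove the two implications separately, with essentially all the weight on $(\mathrm{ii})\Rightarrow(\mathrm{i})$.

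The implication $(\mathrm{i})\Rightarrow(\mathrm{ii})$ is immediate from the hierarchy recorded after Definition~\ref{def:poli}, where $(\mathrm{IV})\Rightarrow(\mathrm{K})$: an $\epsilon$-equivalent $(\mathrm{IV})$-polyhedral renorming is in particular a polyhedral renorming, so $E$ is isomorphically polyhedral (this direction does not even need the quantifier over $\epsilon$, nor separability).

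For $(\mathrm{ii})\Rightarrow(\mathrm{i})$ I would first isolate a purely formal reduction. Suppose $\{f_n\}_{n\in\N}\subset B_{E^*}$ is $w^*$-null and $c$-norming for the given norm $\|\cdot\|$, for some $0<c\le 1$, i.e. $\sup_n|f_n(x)|\ge c\|x\|$ for all $x$. Setting $|x|:=\sup_n|f_n(x)|$ one obtains an equivalent norm with $c\|x\|\le|x|\le\|x\|$, hence $B_E\subset B_{|\cdot|}\subset c^{-1}B_E$. Its dual ball is $B_{(E,|\cdot|)^*}=\cconv^{w^*}\{\pm f_n\}$, so by Milman's theorem $\ext B_{(E,|\cdot|)^*}\subset\overline{\{\pm f_n\}}^{\,w^*}=\{\pm f_n\}\cup\{0\}$; since $f_n\to 0$ in the $w^*$-topology, the only possible $w^*$-cluster point of $\ext B_{(E,|\cdot|)^*}$ is $0$, that is $(\ext B_{(E,|\cdot|)^*})'\subset\{0\}$. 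Thus $|\cdot|$ is $(\mathrm{I})$-polyhedral, and a fortiori $(\mathrm{IV})$-polyhedral. Choosing $c=(1+\epsilon)^{-1}$ then delivers exactly an $\epsilon$-equivalent $(\mathrm{IV})$-polyhedral renorming.

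It remains to produce, for each $\epsilon>0$, a $w^*$-null sequence in $B_{E^*}$ that is $(1+\epsilon)^{-1}$-norming for the \emph{given} (a priori non-polyhedral) norm. Here I would use separability together with isomorphic polyhedrality. Fixing a countable dense set $\{x_k\}\subset S_E$, one selects by induction functionals $f_n\in B_{E^*}$ that norm the $x_k$ up to the factor $(1+\epsilon)^{-1}$, while simultaneously forcing the whole sequence to be $w^*$-null; the polyhedral structure of an auxiliary equivalent norm — via the countable minimal boundary furnished by Theorem~\ref{th: structure} and the attendant Asplund/separable-dual structure — is what allows the selected functionals to be driven into arbitrarily small $w^*$-neighbourhoods of $0$ after finitely many steps. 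This quantitative selection is precisely the content of the approximation theory of \cite{devillefonfhajek} and \cite[Theorem~3.3]{InfPoly}, and it is the main obstacle: the soft part is the reduction above, whereas passing from \emph{isomorphically polyhedral} to \emph{$w^*$-null norming sequence with norming constant arbitrarily close to $1$} genuinely requires the separable polyhedral machinery and cannot be obtained by a general-nonsense renorming.
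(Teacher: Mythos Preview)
The paper does not contain a proof of this theorem: it is stated as a known result and attributed to \cite[Theorem~3.3]{InfPoly} and \cite[Theorem~1.1]{devillefonfhajek}, with no argument given. So there is no ``paper's own proof'' to compare against.

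Your easy direction $(\mathrm{i})\Rightarrow(\mathrm{ii})$ is of course correct. For $(\mathrm{ii})\Rightarrow(\mathrm{i})$, your reduction is valid and in fact yields more than needed: a $w^*$-null $c$-norming sequence $\{f_n\}\subset B_{E^*}$ produces, via $|x|=\sup_n|f_n(x)|$, a norm whose dual ball satisfies $(\ext B_{(E,|\cdot|)^*})'\subset\{0\}$, i.e.\ a $(\mathrm{I})$-polyhedral (hence $(\mathrm{IV})$-polyhedral) $\epsilon$-equivalent renorming when $c=(1+\epsilon)^{-1}$. The Milman argument you give is the standard one and is correct.

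However, your final paragraph is not really a proof but an explicit deferral: you state that producing, for the \emph{given} norm, a $w^*$-null sequence that is $(1+\epsilon)^{-1}$-norming ``is precisely the content'' of \cite{devillefonfhajek} and \cite[Theorem~3.3]{InfPoly}. That is accurate, but it means that, like the paper, you are ultimately invoking the same external references for the substantive step. The informal description you add (induction over a dense sequence in $S_E$, using the countable minimal boundary of an auxiliary polyhedral norm to push later functionals into small $w^*$-neighbourhoods of $0$) points in the right direction but is not a self-contained argument; the quantitative control that guarantees the norming constant stays above $(1+\epsilon)^{-1}$ while the tail becomes $w^*$-null is exactly the non-trivial content of the cited theorems and is not supplied here.
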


\noindent Whether the previous theorem holds in the non-separable case is not known. However, the results contained in  \cite{BibleSmith} imply that this is the case for the Banach space $c_0(\Gamma)$. By combining, the previous results we get the following corollary.

\begin{corollary}	Suppose that we are in one of the following cases:
	\begin{itemize}
		\item $E$ is isomorphic to $c_0(\Gamma)$ for some   nonempty set $\Gamma$;
		\item  $E$ is a separable isomorphically polyhedral Banach space.
	\end{itemize}
Then, for each $\epsilon>0$, $E$ admits an $\epsilon$-equivalent  $(\mathrm{V})$-polyhedral renorming such that the  corresponding closed unit ball  is the closed	convex hull of its extreme points.
\end{corollary}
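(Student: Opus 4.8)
The plan is to reduce both cases to the main result of the paper (the Theorem stated just above), whose only hypothesis is property $(**)$ and which produces, from such a space, an arbitrarily close polyhedral renorming whose unit ball is the closed convex hull of its extreme points, and moreover a $(\mathrm{V})$-polyhedral one when the starting space is $(\mathrm{IV})$-polyhedral. The first thing I would record is that a $(\mathrm{IV})$-polyhedral space automatically satisfies $(**)$: indeed $(\mathrm{IV})\Rightarrow(\mathrm{V})$ by the chain of implications recalled in Section~\ref{prelim}, and property $(\mathrm{V})$ trivially forces \eqref{eq:(**)}, since $\mathcal B_0\subset\ext B_{E^*}$ and \eqref{eq:(**)} merely asks condition \eqref{eq:(v)} for the smaller set $\mathcal B_0$ and only at Fr\'echet differentiability points of $B_E$.

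Next, fix $\epsilon>0$ and choose $\epsilon_1,\epsilon_2>0$ with $(1+\epsilon_1)(1+\epsilon_2)\le 1+\epsilon$ (for instance $\epsilon_1=\epsilon_2=\sqrt{1+\epsilon}-1$). In both cases of the statement, $E$ admits an $\epsilon_1$-equivalent $(\mathrm{IV})$-polyhedral renorming: if $E$ is separable and isomorphically polyhedral this is precisely the implication $(2)\Rightarrow(1)$ of Theorem~\ref{th: +rinormable}; if $E$ is isomorphic to $c_0(\Gamma)$, it follows from the results of \cite{BibleSmith}, which, as noted after Theorem~\ref{th: +rinormable}, provide the analogue of that theorem for $c_0(\Gamma)$. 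Denote by $E_1$ this renormed space, so that $B_E\subset B_{E_1}\subset(1+\epsilon_1)B_E$; by the previous paragraph $E_1$, being $(\mathrm{IV})$-polyhedral, satisfies $(**)$.

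Then I would apply the main theorem to $E_1$ with parameter $\epsilon_2$. This yields an $\epsilon_2$-equivalent renorming $X$ of $E_1$, i.e. $B_{E_1}\subset B_X\subset(1+\epsilon_2)B_{E_1}$, which is polyhedral, satisfies $B_X=\cconv(\ext B_X)$, and — since $E_1$ is $(\mathrm{IV})$-polyhedral — is in fact $(\mathrm{V})$-polyhedral. Composing the two chains of inclusions gives $B_E\subset B_{E_1}\subset B_X$ and $B_X\subset(1+\epsilon_2)B_{E_1}\subset(1+\epsilon_2)(1+\epsilon_1)B_E\subset(1+\epsilon)B_E$, so $X$ is an $\epsilon$-equivalent $(\mathrm{V})$-polyhedral renorming of $E$ with the desired extreme-point property, which finishes the proof.

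I do not expect a genuine obstacle: all the substance is already contained in the main construction of Section~\ref{sec: mainconstruction}--\ref{stronger} and in the renorming theorems quoted. The only points that need a word of care are the elementary implication $(\mathrm{IV})\Rightarrow(**)$ and the fact that $\delta$-equivalence of renormings composes with the product of the constants, both of which are immediate.
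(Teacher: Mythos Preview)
Your argument is correct and is exactly the combination the paper has in mind: first pass to an $\epsilon_1$-equivalent $(\mathrm{IV})$-polyhedral norm via Theorem~\ref{th: +rinormable} (separable case) or \cite{BibleSmith} ($c_0(\Gamma)$ case), then apply the main theorem with parameter $\epsilon_2$, and compose the two renormings. Your justification that $(\mathrm{IV})\Rightarrow(**)$ is fine; alternatively you could invoke Lemma~\ref{lemma:5poli1normanti} directly to see that $\mathcal B_0$ satisfies property $(\mathrm{V})$, hence a fortiori $(**)$.
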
  

\subsection{Lindenstrauss spaces}\label{subsection: lindenstrauss}

A Banach space $E$ is called an {\em $L_1$-predual space} or a {\em Lindenstrauss space} if
its dual is isometric to $L_1(\mu)$ for some measure $\mu$. The following result about polyhedral Lindenstrauss spaces holds.

\begin{theorem}[{\cite[Theorem~4.3]{casinimiglierinapiaseckivesely}} ]
	 Let $E$ be a Lindenstrauss space. The following properties are
	equivalent:
	\begin{enumerate}
		\item $E$ is a polyhedral space; 
	\item 
	$E$ does not contain an isometric copy of $c$;
	\item $E$ is $(\mathrm{V})$-polyhedral.
\end{enumerate}
\end{theorem}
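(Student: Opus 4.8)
The plan is to establish the cycle $(3)\Rightarrow(1)\Rightarrow(2)\Rightarrow(3)$. The implication $(3)\Rightarrow(1)$ is immediate: $\mathrm{(V)}$-polyhedrality implies polyhedrality by the chain of implications recalled in Section~\ref{prelim}.

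For $(1)\Rightarrow(2)$ I would argue by contraposition, using two ingredients. First, polyhedrality passes to subspaces: if $Y\subset Z$ with $Z$ polyhedral, then every two-dimensional subspace $V$ of $Y$ is a two-dimensional subspace of $Z$, so $B_Y\cap V=B_Z\cap V$ is a polytope, hence $Y$ is polyhedral. Second, the space $c$ of convergent sequences is not polyhedral. To see the latter I would invoke Theorem~\ref{th: structure}: were $c$ polyhedral, $S_c$ would be covered by the true faces of $B_c$, so the true-face functionals would form a boundary for $c$. But writing $c^{*}$ as $\ell_1$ over $\N$ together with the $\lim$-functional, one checks that $f\in S_{c^{*}}$ defines a true face of $B_c$ exactly when $f=\pm e_n^{*}$ for some $n\in\N$ --- if the support of $f$ is infinite, or $f$ charges the limit, or $f$ is supported on two or more coordinates, then $f^{-1}(1)\cap B_c$ has empty relative interior in $f^{-1}(1)$. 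Since $\{\pm e_n^{*}:n\in\N\}$ is not a boundary for $c$ (the norm of $(1-\tfrac1n)_{n}\in S_c$ is attained by no $\pm e_n^{*}$), $S_c$ is not covered by true faces, so $c$ is not polyhedral; consequently a polyhedral space cannot contain an isometric copy of $c$. (One may also observe, via Proposition~\ref{prop: gateaux-frechet-truefaces}, that $(1,1,1,\dots)$ is not a smooth point of $B_c$, but it is the boundary-versus-boundary argument above that actually does the job.)

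The substance of the theorem is $(2)\Rightarrow(3)$, and this is where I expect the main obstacle: it is a genuine statement about the isometric structure of $L_1$-preduals rather than a soft argument. I would argue by contraposition, assuming $E$ is a Lindenstrauss space that is not $\mathrm{(V)}$-polyhedral and producing an isometric copy of $c$ inside $E$. By Definition~\ref{def:5poli} and Lemma~\ref{lemma:5poli1normanti}, failure of $\mathrm{(V)}$ gives $x\in S_E$ and a net $\{f_\alpha\}\subset\ext B_{E^{*}}\setminus\D_E(x)$ with $f_\alpha(x)\to1$; by $w^{*}$-compactness we may assume $f_\alpha\xrightarrow{w^{*}}f$ with $f\in\D_E(x)$ and $\|f\|=1$, and after standard reductions isolate a countable sub-configuration $(f_n)_{n\in\N}$ with $f_n(x)\to1$, $f_n\xrightarrow{w^{*}}f$, and the $f_n$ together with $f$ sufficiently independent. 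The goal is then to reproduce inside $E$ the configuration characteristic of $c$: an element playing the role of the constant sequence (extracted from $x$) together with elements playing the role of the unit vectors, whose coordinate functionals are the $f_n$ and whose common limit functional is $f$. The delicate point --- the heart of the proof --- is to make this embedding \emph{isometric} rather than merely isomorphic, and it is exactly here that the $L_1$-predual hypothesis is used: one exploits the local $\ell_\infty^{k}$ structure of $E$ (every finite subset of a Lindenstrauss space lies, up to $\epsilon$, in a subspace isometric to some $\ell_\infty^{k}$), the fact that $B_{E^{*}}$ is a simplex, and a Lazar--Lindenstrauss / Michael--Pe\l czy\'nski type selection, to assemble compatible finite-dimensional blocks whose closed union is isometric to $c$. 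This isometric embedding $c\hookrightarrow E$ contradicts $(2)$, closing the cycle.

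An alternative organisation isolating the same obstacle is to prove directly that a polyhedral Lindenstrauss space is automatically $\mathrm{(V)}$-polyhedral --- which, with the trivial $(3)\Rightarrow(1)$, gives $(1)\Leftrightarrow(3)$ --- and, separately, that a Lindenstrauss space containing no isometric copy of $c$ is polyhedral, giving $(2)\Rightarrow(1)$; both of these again rest on the structure theory of $L_1$-preduals, and the essential difficulty is unchanged.
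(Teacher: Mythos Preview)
The paper does not prove this statement; it is quoted verbatim from \cite[Theorem~4.3]{casinimiglierinapiaseckivesely} and invoked only as a black box to obtain the corollary that follows. There is therefore no proof in the present paper against which to compare your proposal.

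On its own merits your outline is reasonable. The cycle $(3)\Rightarrow(1)\Rightarrow(2)\Rightarrow(3)$ is the natural organisation, and your argument that $c$ is not polyhedral via Theorem~\ref{th: structure} is correct: the true-face functionals of $B_c$ are exactly $\pm e_n^{*}$, and the point $(1-\tfrac1n)_n\in S_c$ is not attained by any of them, so the true faces do not cover $S_c$. For $(2)\Rightarrow(3)$ you correctly locate the starting data (a point $x\in S_E$ and functionals $f_n\in\ext B_{E^*}\setminus\D_E(x)$ with $f_n(x)\to1$) and the target (an isometric copy of $c$), and you are right that the Lindenstrauss-space hypothesis enters precisely in upgrading the configuration to an isometric embedding. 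What you have written for this step, however, is a description of the shape of the argument rather than the argument itself; the actual proof in \cite{casinimiglierinapiaseckivesely} carries out this construction concretely using the specific structure of $\ext B_{E^*}$ in an $L_1$-predual, and that work is not something one can wave through.
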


The theorem above implies that if $E$ is a polyhedral Lindenstrauss space then condition $(**)$ is satisfied, hence we have the following corollary.

\begin{corollary}
	Let $E$ be a polyhedral Lindenstrauss space. Then, for each $\epsilon>0$, $E$ admits an $\epsilon$-equivalent polyhedral renorming such that the  corresponding closed unit ball  is the closed	convex hull of its extreme points.
\end{corollary}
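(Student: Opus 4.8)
The plan is to obtain the corollary as an immediate consequence of two results we are entitled to use: the main theorem of the paper, stated above in this section (if $E$ is polyhedral and satisfies property $(**)$, then for each $\epsilon>0$ the renorming $X$ of Section~\ref{sec: mainconstruction} is $\epsilon$-equivalent, polyhedral, and satisfies $B_X=\cconv(\ext B_X)$), together with the theorem of Casini, Miglierina, Piasecki and Veselý, \cite[Theorem~4.3]{casinimiglierinapiaseckivesely}, quoted just above. Thus the only point actually requiring an argument is that a polyhedral Lindenstrauss space satisfies property $(**)$.

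First I would note that, by \cite[Theorem~4.3]{casinimiglierinapiaseckivesely}, a polyhedral Lindenstrauss space is $(\mathrm{V})$-polyhedral, so it suffices to verify the implication $(\mathrm{V})\Rightarrow(**)$. For this the key observation is that the minimal boundary $\B_0$ of Theorem~\ref{th: structure} is contained in $\ext B_{E^*}$: given $f\in\B_0$, pick $x_0$ in the interior of the true face $f^{-1}(1)\cap B_E$; by Proposition~\ref{prop: gateaux-frechet-truefaces}, $x_0$ is a point of G\^ateaux differentiability of $B_E$, hence $\D_E(x_0)=\{f\}$, and a functional that is the unique norming functional of a unit vector is necessarily an extreme point of $B_{E^*}$ (if $f=\frac12(g+h)$ with $g,h\in B_{E^*}$, then $g(x_0)=h(x_0)=1$, forcing $g=h=f$). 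Consequently, for every $x\in S_E$,
$$\sup\{f(x);\ f\in\B_0\setminus\D_E(x)\}\le\sup\{f(x);\ f\in\ext B_{E^*}\setminus\D_E(x)\}<1,$$
the last inequality being precisely property $(\mathrm{V})$; in particular this holds for every $x\in\mathrm{FD}(S_E)$, which is exactly property $(**)$.

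Having checked that $E$ satisfies $(**)$, I would simply apply the main theorem: for each $\epsilon>0$ the space $X$ constructed in Section~\ref{sec: mainconstruction} is an $\epsilon$-equivalent polyhedral renorming of $E$ with $B_X=\cconv(\ext B_X)$, which is the assertion of the corollary. There is no genuine obstacle here, since all the substance is already contained in the main construction and in the cited Lindenstrauss-space theorem; the only step demanding a little attention is the inclusion $\B_0\subset\ext B_{E^*}$ underlying the implication $(\mathrm{V})\Rightarrow(**)$. (Note that the corollary claims only a \emph{polyhedral} renorming, so the ``$(\mathrm{IV})\Rightarrow X$ is $(\mathrm{V})$'' addendum of the main theorem is not needed and is not available here, as $(\mathrm{V})$-polyhedrality of $E$ does not entail $(\mathrm{IV})$-polyhedrality.)
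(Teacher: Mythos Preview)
Your proposal is correct and follows essentially the same route as the paper: invoke \cite[Theorem~4.3]{casinimiglierinapiaseckivesely} to obtain $(\mathrm{V})$-polyhedrality, deduce property $(**)$, and apply the main theorem. The only difference is cosmetic: the paper leaves the implication $(\mathrm{V})\Rightarrow(**)$ implicit (it follows at once from Lemma~\ref{lemma:5poli1normanti}, since $(\mathrm{V})$-polyhedrality is equivalent to $\B_0$ satisfying property $(\mathrm{V})$, which is \eqref{eq:(**)} for all $x\in S_E$), whereas you spell it out via the inclusion $\B_0\subset\ext B_{E^*}$---a perfectly valid alternative justification.
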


In view of the previous result, the following problem arises.

\begin{problem}
Does there exist a polyhedral Lindenstrauss space $E$ satisfying $B_E=\cconv\bigl(\mathrm{ext}(B_E)\bigr)$?
\end{problem}

\subsection{Cardinality of $\ext(B_E)$}

As pointed out above, the unit ball of a $(\mathrm{IV})$-poly\-hedral Banach space does not admit extreme points. However, the results presented above and those contained in \cite{DEPOLY,Schreier} show that it is conceivable that the unit ball of a polyhedral Banach space has many extreme points, in the sense that it is the closed	convex hull of its extreme points.

Another related problem could be the following: how many extreme points, in the sense of cardinality, the unit ball of a polyhedral Banach space could have? Let us point out that the separable polyhedral Banach spaces  constructed in \cite{DEPOLY,Schreier} are such that  the corresponding unit ball contains countably many extreme points. This fact can be directly verified or, alternatively, one can apply the following easy-to-prove observation, taking into account that these spaces are $(\mathrm{V})$-polyhedral (and hence $(\mathrm{VI})$-polyhedral).

\begin{observation}\label{obs: extisolated}
	Let $E$ be a $(\mathrm{VI})$-polyhedral Banach space, then each point in $\ext(B_E)$ is $\|\cdot\|$-isolated.
\end{observation}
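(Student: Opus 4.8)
The goal is to show that every $x\in\ext(B_E)$ is an isolated point of $\ext(B_E)$; granting this, writing $\ext(B_E)=\bigcup_{n\in\N}\{x\in\ext(B_E):\dist(x,\ext(B_E)\setminus\{x\})\ge1/n\}$ and noting that each piece is $(1/n)$-separated and hence of cardinality $\le\mathrm{dens}(E)$ yields the stated bound. I would first record the standard convexity reformulation of $(\mathrm{VI})$: for $x,y\in S_E$ one has $[x,y]\subseteq S_E$ if and only if $\|x+y\|=2$ (the nontrivial direction uses convexity of $t\mapsto\|(1-t)x+ty\|$ together with a Hahn--Banach functional norming $\tfrac12(x+y)$, which is then forced to equal $1$ at both $x$ and $y$). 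Thus $(\mathrm{VI})$ asserts precisely that each $x\in S_E$ has a neighbourhood $V$ with $[x,y]\subseteq S_E$ whenever $y\in V\cap S_E$.

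Assume, for contradiction, that some $x\in\ext(B_E)$ is not isolated, and choose distinct $y_n\in\ext(B_E)\setminus\{x\}$ with $y_n\to x$. For large $n$ we have $y_n\in V$, hence $[x,y_n]\subseteq S_E$; moreover each segment $[x,y_n]$ is \emph{inextensible} in $B_E$ — it cannot be prolonged past $y_n$ (else $y_n$ would lie in the relative interior of a segment of $B_E$, contradicting $y_n\in\ext(B_E)$) nor past $x$ (since $x\in\ext(B_E)$). Passing to the linear subspace $Z_n=\mathrm{span}\{x,y_n\}$, which is $2$-dimensional for large $n$, makes this concrete: $B_E\cap Z_n$ is a symmetric polygon, $x$ is one of its vertices (being extreme in $B_E$), and $[x,y_n]$, being a boundary segment joining the two polygon-vertices $x$ and $y_n$, is a whole edge of it, of length $\|y_n-x\|\to0$. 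Thus infinitely many distinct inextensible edges of $B_E$ would accumulate at the extreme point $x$.

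The final step — ruling this out — is the crux, and I would handle it by upgrading the local flatness afforded by $(\mathrm{VI})$ to \emph{local finite polyhedrality}: that $\|\cdot\|$ agrees on some neighbourhood $U$ of $x$ with $\max_{1\le i\le k}f_i$ for finitely many $f_i\in S_{E^*}$ with $f_i(x)=1$. Granting this, on $U$ the ball $B_E$ coincides with the translate $x+C$ of the polyhedral cone $C=\{v:f_i(v)\le0,\ i\le k\}$, which is pointed (if $v\in\bigcap_i\ker f_i\setminus\{0\}$ then $x\pm tv\in B_E$ for small $t$, contradicting $x\in\ext(B_E)$), so $B_E$ has no extreme point other than $x$ in $U$ — contradicting $y_n\to x$, and finishing the proof. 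To derive the local finite polyhedrality from $(\mathrm{VI})$ I would use the structure theorem (Theorem~\ref{th: structure}) — $S_E$ is covered by true faces whose supporting functionals lie in the minimal boundary — and argue that $(\mathrm{VI})$ confines, near $x$, the sphere to the faces through $x$, while the polygon picture above shows each $2$-plane through $x$ meets only finitely many of them; combining these, via $w^*$-compactness of the relevant subset of $\D_E(x)$, should force the finiteness. This passage from ``$(\mathrm{VI})$'' to ``finitely many faces through $x$'' is where I expect the real work to lie (it is also available via the local descriptions of $(\mathrm{VI})$ in \cite{InfPoly,durpap}).
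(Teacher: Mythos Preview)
The paper gives no proof of this observation (declaring it ``easy-to-prove''), so I assess your argument on its own merits. The crux of your proposal is the passage from $(\mathrm{VI})$ to \emph{local finite polyhedrality} at the extreme point $x$ --- that $\|\cdot\|$ agrees on a neighbourhood of $x$ with $\max_{1\le i\le k}f_i$ for finitely many $f_i$. You do not prove this (you yourself concede it is ``where the real work lies''), and in fact it is \emph{false} in general. The very spaces $X$ constructed in this paper are $(\mathrm{V})$-polyhedral, hence $(\mathrm{VI})$-polyhedral, yet by the pyramidal decomposition (Proposition~\ref{prop: pyramidaldecomposition}) each extreme point $(1+\theta_\gamma)e_\gamma$ is the apex of a pyramid over the infinite-dimensional base $S^+_\gamma$; by Corollary~\ref{cor: cardinalityboundary} this base has infinitely many facets, so infinitely many true faces of $B_X$ pass through the apex and every neighbourhood of it meets all of them. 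Thus the norm is \emph{not} a finite maximum near the apex, and your appeals to $w^*$-compactness and $2$-plane sections cannot salvage the route.

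A short direct argument avoids the difficulty. Let $V$ be an open $(\mathrm{VI})$-neighbourhood of $x\in\ext(B_E)$ and suppose $y\in V\cap\ext(B_E)$ with $y\neq x$; then $[x,y]\subset S_E$. In the plane $Z=\mathrm{span}\{x,y\}$ the section $P=B_E\cap Z$ is a polygon with $x,y$ among its vertices. Of the two edges of $P$ meeting at the vertex $y$, one lies along the line through $x$ and $y$ (since $[x,y]\subset\partial P$); let $[y,v]$ be the other, so that $v$ is off that line. Pick $z\in(y,v)$ close enough to $y$ that $z\in V$; then $z\in V\cap S_E$, so $(\mathrm{VI})$ gives $[x,z]\subset S_E\cap Z=\partial P$. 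But near $z$ the curve $\partial P$ coincides with the edge $[y,v]$, so a terminal portion of $[x,z]$ lies on the line through $y$ and $v$; two segments sharing a nondegenerate sub-segment are collinear, forcing $x$ onto that line --- contradicting the choice of $v$. Hence $V\cap\ext(B_E)=\{x\}$ and $x$ is isolated.
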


This leads to the following problem, which, by the observation above, has obviously an affirmative answer in the case in which the space considered is $(\mathrm{VI})$-polyhedral.

\begin{problem}
	Does $E$ separable polyhedral imply $\ext(B_E)$ is countable?
More in general,  does $$\mathrm{card}(\ext(B_E))\leq\mathrm{dens}(E)$$ hold, whenever  $E$ is a polyhedral Banach space?
\end{problem}

\subsection*{Acknowledgement}

The research of the first author has been partially
supported by GNAMPA (INdAM -- Istituto Nazionale di Alta Matematica)  and by the Ministry for Science and Innovation, Spanish State Research Agency (Spain),  under project 
PID2020-112491GB-I00.

\end{document}